\newtheorem{thm}{Theorem}[section]
\newtheorem{cor}[thm]{Corollary}
\newtheorem{hyp}[thm]{Hypothesis}
\newtheorem{lemma}[thm]{Lemma}
\newtheorem{prop}[thm]{Proposition}
\newtheorem{example}[thm]{Example}
\theoremstyle{definition}
\theoremstyle{remark}
\newtheorem{rem}{Remark}[section]
\numberwithin{equation}{section}
\newcommand{\Id}{\mathrm{I}}
\DeclareMathOperator*{\esssup}{ess\,sup}
\DeclareMathOperator{\Div}{\mathrm{div}}
\DeclareMathOperator{\Grad}{\nabla}
\DeclareMathOperator{\trace}{\mathrm{tr}}
\newcommand{\suml}[2]{\ensuremath{\sum\limits_{#1}^{#2}}}
\newcommand{\ssum}{\textstyle \sum} 
\newcommand{\tr}[1]{\trace\left(#1\right)}
\newcommand{\sig}[1]{\sigma\left(#1\right)}
\newcommand{\e}[1]{\varepsilon\left(#1\right)}
\newcommand{\inner}[2]{\langle #1,#2\rangle}
\newcommand{\dual}[3]{\inner{#1}{#2}_{#3}}
\newcommand{\foralls}{\forall \,}
\newcommand{\nrmbar}[1]{\vert\vert #1 \vert\vert}
\newcommand{\nrm}[2]{\nrmbar{#1}_{#2}}
\newcommand{\ls}{\lesssim}
\newcommand{\R}{\mathbb{R}}
\newcommand{\dx}{\, \mathrm{d}x}
\newcommand{\ds}{\, \mathrm{d}s}
\newcommand{\alphafac}{\alpha_{\eta}}
\newcommand{\changed}[1]{\textcolor{black}{#1}}
\begin{document}

\title[A posteriori estimation for multiple-network poroelasticity]{A posteriori error estimation and adaptivity for multiple-network poroelasticity}
\author{E.~Eliseussen \and M.~E.~Rognes \and T.~B.~Thompson}
\address[E.~Eliseussen, e.e.odegaard@medisin.uio.no]{Institute of Basal Medical Sciences, University of Oslo, Oslo, Norway}
\address[M.~E.~Rognes, meg@simula.no]{Department of Scientific Computing and Numerical Analysis, Simula Research Laboratory, Oslo, Norway; Department of Mathematics, University of Bergen, Bergen, Norway.}
\address[T.~B.~Thompson, travis.thompson@ttu.edu]{Department of Mathematics and Statistics, Texas Tech University, Lubbock, Texas, United States.}

\thanks{This project has received funding from the European Research
  Council (ERC) under the European Union's Horizon 2020 research and
  innovation programme under grant agreement 714892 and by the
  Research Council of Norway under the FRINATEK Young Research Talents
  Programme through project \#250731/F20 (Waterscape). Parts of the
  computations were performed on resources provided by UNINETT Sigma2
  - the National Infrastructure for High Performance Computing and
  Data Storage in Norway.}

\date{\today}


\begin{abstract}
  The multiple-network poroelasticity (MPET) equations describe
  deformation and pressures in an elastic medium permeated by
  interacting fluid networks. In this paper, we (i) place these
  equations in the theoretical context of coupled elliptic-parabolic
  problems, (ii) use this context to derive residual-based
  a posteriori error estimates and indicators for fully discrete MPET
  solutions and (iii) evaluate the performance of these error
  estimators in adaptive algorithms for a set of test cases: ranging
  from synthetic scenarios to physiologically realistic simulations of
  brain mechanics.
\end{abstract}

\keywords{Multiple network poroelasticity, A posteriori error estimates, Adaptive finite element method, Brain modeling}
\subjclass[2010]{65M50, 65M60, 65Z05, 92-08}

\maketitle

\section{Introduction}
\label{sec:introduction}

At the macroscale, the brain and other biological tissues can often be
viewed as a poroelastic medium: an elastic structure permeated by one
or more fluid networks. Such structures can be modeled via Biot's
equations in the case of a single fluid network~\citep{biot-1941,
  biot-1955, terzaghi-book-1943} or by their generalization to the
equations of multiple-network poroelasticity (MPET) which describe the
case of two or more interacting fluid networks~\citep{aifantis-1980,
  bai1993multiporosity,barenblatt-1963,barenblatt-1960,vardakis-ventikos-2018,
  aifantis-1984,tully-ventikos-2011, vardakis-ventikos2016,aifantis-1982}. 
However, the computational expense associated with the numerical solution of 
these equations, over complex domains such as the human brain, is substantial.  
A natural question is therefore whether numerical error estimation and 
adaptivity can yield more accurate simulations of the MPET equations within a 
limited set of computational resources.

The quasi-static MPET equations read: given a domain $\Omega$, a
finite final time $T > 0$ and a set of $J$ fluid networks, find the
displacement field $u:[0,T] \times \Omega \rightarrow \R^d$ and
pressure fields $p_1, p_2 \dots p_{J}:[0,T] \times \Omega \rightarrow
\R$ such that
\begin{subequations}
  \label{eq:mpet}
  \begin{align}
    \label{eq:mpet-a}
    -\Div \sigma(u) + \suml{j=1}{J} \alpha_j \Grad p_j &= f, \\
    \label{eq:mpet-b}
    \partial_t \left(s_j p_j +  \alpha_j \Div u \right) - \Div \left(\kappa_j \Grad p_j \right) + T_j &= g_j.
  \end{align}
\end{subequations}
The quantity $\sig{u} = 2\mu\e{u} + \lambda \tr{\e{u}}\Id$
in~\eqref{eq:mpet-a} is the elastic stress tensor and involves
the displacement $u$, the linearized strain tensor $\e{u} =
\frac12\left(\Grad u + \Grad u^{T}\right)$, the $d\times d$ identity
matrix $\Id$ and the material Lam{\'e} coefficients $\mu$ and
$\lambda$. Each one of the $J$ fluid networks is associated with a Biot-Willis 
coefficient $\alpha_j$, a storage coefficient $s_j$, and a hydraulic 
conductivity $\kappa_j$.  An interpretation of the Biot-Willis and storage 
coefficients, in the MPET context, appears in ~\cite[Section 
\S3]{bai1993multiporosity}.  We use transfer terms~$T_{j}$ in~\eqref{eq:mpet-b} 
of the form
\begin{equation}
  \label{eq:def:T_j}
  T_j = \sum_{i=1}^J T_{ji}, \quad T_{ji} = \gamma_{ji} (p_j - p_i) .
\end{equation}
The coefficients $\gamma_{ji}$ regulate the interplay between network
$i$ and network $j$ and $T_j$ is the total transfer out of network $j$
(into the other networks). The transfer term vanishes when $J=1$ and
\eqref{eq:mpet} coincides with Biot's equations for a single fluid in
a poroelastic medium.  We also note that the fluid (Darcy) velocity
$v_j$ in network $j$ is defined by
\begin{equation}
  v_j = - \kappa_j \Grad p_j .
  \label{eq:velocity}
\end{equation}

Over the last decade, several authors have studied a posteriori error
estimation and adaptivity related to \eqref{eq:mpet} in the case of $J=1$; 
that is, for Biot's equations of poroelasticity.  Depending on the application 
of interest, different formulations of Biot's equations have been used which 
introduce additional solution fields such as the Darcy velocity, the total 
pressure or the effective stress. In each case, a posteriori methods have 
been developed to facilitate adaptive refinement strategies. In 
\cite{riedlbeck2017}, the authors consider the standard two-field formulation 
of Biot's equation in two spatial dimensions, develop an a posteriori error 
analysis based on $H(\Div)$ reconstructions of the flux and effective stress 
and apply the resulting estimators to construct a time-space adaptive algorithm.  
A posteriori estimators have also been used to provide error 
estimates for the popular fixed-stress iterative solution scheme applied to 
the two-field formulation \cite{kumar2021}. Formulations with additional fields 
have also been considered for Biot's equations. The total pressure formulation 
\cite{baier2016} is a locking-free, three-field formulation, ideal for a 
nearly-incompressible poroelastic material. A priori estimates, and an 
adaptive refinement strategy, for this formulation are constructed in 
\cite{khan2020robust} for quadrilateral and simplicial meshes.  
Residual-based a posteriori error estimates have also been advanced  
\cite{li2019residual} for a lowest-order discretization of the standard 
Darcy-flux three-field formulation which, as shown in \cite{rodrigo2018}, 
robustly preserves convergence in the presence of vanishingly small hydraulic 
conductivity.  Finally, a four-field formulation, with symmetric stress and a 
Darcy velocity, of Biot's equations has been used to develop  
\cite{ahmed2019adaptive} a posteriori error estimates, and an adaptive 
refinement, based on post-processed pressure and displacement fields.

The a posteriori landscape for the more general MPET system \eqref{eq:mpet} is 
considerably sparse.  A posteriori error estimates for the two-field 
formulation of the Barenblatt-Biot equations (corresponding to the $J = 2$ 
case of~\eqref{eq:mpet}) have indeed been obtained 
by \citet{nordbotten2010posteriori}.  In general, though, there has been 
little work on the development of a posteriori error estimators for 
\eqref{eq:mpet}, for formulations with any number of fields, in the case of 
more than one fluid network (i.e.~$J > 1$).  However, the recent work of  
\cite{ern-meunier-2009} developed an abstract framework for a posteriori 
error estimators for a general class of coupled elliptic-parabolic 
problems. 

In this manuscript, our focus is three-fold. First, we rigorously
place the MPET equations in the context of coupled elliptic-parabolic
problems. In particular, we consider extended spaces, bilinear forms
and augmentation with a semi-inner product arising from the additional
transfer terms. Second, we use this context to derive specific
a posteriori error estimates and error indicators for the space-time
finite element discretizations of the multiple-network poroelasticity
equations in general. In biomedical applications, two-field
variational formulations are often used to numerically approximate the
multiple-network poroelasticity equations~\citep{tully-ventikos-2011,
  vardakis-ventikos2016}, and we therefore focus on such here. Third,
we formulate a physiological modelling and simulation-targeted
adaptive strategy and evaluate this strategy on a series of test cases
including a clinically-motivated simulation of brain mechanics.

\section{Notation and preliminaries}
\label{sec:preliminaries}

This section provides a brief account of the notation and relevant
results employed throughout the remainder of the manuscript.

\subsection{Domain, boundary and meshes}
It is assumed that the poroelastic domain $\Omega \subset \R^d$ with
$d \in \{1, 2, 3\}$ is a bounded, convex domain with $\partial \Omega$
Lipschitz continuous. We consider a family of mesh discretization
$\left\{\mathcal{T}_h\right\}_{h>0}$ of $\Omega$ into simplices;
triangles when $d=2$ and tetrahedra when $d=3$. Here, $h>0$ is a
characteristic mesh size such as the maximum diameter over all
simplices. Furthermore, we assume that each mesh $\mathcal{T}_h$ in
the family is quasi-uniform.

\subsection{Material parameters}
\label{sec:introduction:material-parameters}

For simplicity, we assume that all material parameters are constant
(in space), and that the following (standard) bounds are satisfied:
$\mu > 0$, $2 \mu + \lambda > 0$, $\kappa_j > 0$, $\alpha_j \in (0,
1]$, $s_j > 0$ for $j = 1, \dots, J$, and $\gamma_{ji} = \gamma_{ij}
  \geq 0$ for $i, j = 1, \dots, J$ with $\gamma_{ii} = 0$.  The
  analysis can be extended to the case where the parameters, above,
  vary with sufficient regularity in space and time provided the above
  bounds hold uniformly.  For each parameter, $\xi_i$ or $\xi_{ij}$
  above, the minimum and maximum notation
\[
\xi_{\max} = \max\limits_{i} \xi_i \text{ or }  \max\limits_{ij}\xi_{ij} 
\quad\text{and}\quad 
\xi_{\min} = \min\limits_{i} \xi_i\text{ or }  \min\limits_{ij}\xi_{ij},
\] 
will be used throughout the manuscript; the notational extension to the 
case of smoothly varying parameters, on a bounded domain with 
compact closure, is clear.

\subsection{Norms and function spaces}
Let $f$, $g$ denote real-valued functions with domain $\Omega
\subset\R^d$.  If there exists a generic constant $C$ with $f \leq C
g$ then we write
\begin{equation*}
  f \ls g. 
\end{equation*}
The notation $\inner{f}{g}$ signifies the usual Lebesgue inner product defined 
by
\begin{equation*}
  \inner{f}{g}=\int_{\Omega} fg \dx, 
\end{equation*} 
and $\nrm{f}{} = \inner{f}{f}^{1/2}$ is the corresponding norm on the Hilbert 
space of square-integrable functions
\begin{equation*}
  L^2(\Omega) =
  \left \{ f: \Omega \rightarrow \R \, | \, \nrm{f}{} <  \infty \right \} .
\end{equation*}
When the context is evident in praxis the domain, $\Omega$, is
suppressed in the above expressions. Given $w$ a positive constant,
positive scalar field, or positive-definite tensor field, the
symbolics
\begin{equation*}
  \dual{f}{g}{w} = \inner{wf}{g},\quad %
  \nrm{f}{w} = \dual{f}{f}{w}^{1/2},
\end{equation*}
refer to \changed{a $w$-weighted} inner product and norm, respectively.

The Sobolev space $H^1(\Omega)$, often abbreviated as simply $H^1$,
consists of those functions $f \in L^2$ whereby $\partial_{x_j} f$
exists, in the sense of distributions, for every $j=1,2,\dots,d$ and
$\partial_{x_j}f \in L^2$.  The associated norm is given by the
expression
\begin{equation*}
  \nrm{f}{H^1} = \left(\nrm{f}{}^2 + %
  \suml{j=1}{d}\nrm{\partial_{x_j}f}{}^2\right)^{1/2}.
\end{equation*}
The subset $H_0^1 \subset H^1$ signifies functions with zero trace on the 
boundary; that is, those functions $f\in H^1$ such that $f(x) = 0$ for almost
every $x\in\partial\Omega$. %
%
In addition, given a Hilbert space $X$ with inner product
$\inner{\cdot}{\cdot}_{X}$, the notation $[X]^d$ refers to vectors $f
= [f_1,f_2, \dots,f_d ]^T$ whereby $f_j \in X$ for each $j=1, 2,
\dots, d$. The natural inner product on $[X]^d$, in which $[X]^d$ is
also a Hilbert space, is then
\begin{equation*}
  \inner{f}{g} = \suml{j=1}{d}\inner{f_j}{g_j}_{X},
\end{equation*}
with resulting norm
\begin{equation*}
  \nrm{f}{X^d} = \left(\suml{j=1}{d}\nrm{f_j}{X}^2 \right)^{1/2}.
\end{equation*} 

The additional decoration of the inner product, for the case of a Hilbert space 
$X$, will be omitted when the context is clear.  For $X$ any Banach space, the 
notation $X^{\ast}$ denotes the dual space,and we also write 
$\dual{x^*}{x}{X'\times X}$ for the duality pairing. Accordingly, the operator 
norm on $X^{\ast}$ is denoted
\begin{equation*}
  \nrm{x^*}{X^{\ast}} = \sup\limits_{\nrm{x}{X}=1} |\dual{x^*}{x}{X^{\ast}\times X}|.
\end{equation*}
Unlike the inner product case, the decoration of the duality pairing bracket 
notation will always be made explicit and never omitted.   

We also recall the canonical definition \citep{evans-book-2010} of some useful 
time-dependent spaces whose codomain is also a given Hilbert space $X$.  
With $X$ selected we consider a strongly measurable function 
$f:[0,T]\rightarrow X$.  Then $f\in L^2(0,T;X)$ means that 
\begin{equation*}
	\nrm{f}{L^2(0,T;X)} = %
	\left(\int_{0}^{T} \nrm{f(t)}{X}^2\, dt\right)^{1/2} < \infty,
\end{equation*}
whereas $f\in L^\infty(0,T;X)$ implies
\begin{equation*}
	\nrm{f}{L^2(0,T;X)} = %
	\esssup\limits_{0\leq t\leq T} \nrm{f(t)}{X} < \infty,
\end{equation*}
and $f\in C(0,T;X)$ means that
\begin{equation*}
	\nrm{f}{C(0,T;X)} = %
	\max\limits_{0\leq t \leq T}\nrm{f(t)}{X} < \infty.
\end{equation*} 

We now discuss those strongly measurable functions, $f:[0,T]\rightarrow X$, 
which possess weakly differentiability in time. The space $H^1(0,T;X)$ denotes 
the collection of functions, $f\in L^2(0,T;X)$, such that $\partial_t f$ 
exists, in the weak sense, and also resides in $L^2(0,T;X)$.  This is similar
to the usual definition of $H^1\left(\Omega\right)$, given above, and the norm
corresponding to this space is also similar; it is given by
\begin{equation*}
	\nrm{f}{H^1(0,T;X)} = \left( \int_{0}^{T} \nrm{f(t)}{X}^2 + %
	\nrm{\partial_t f(t)}{X}^2\, dt \right).
\end{equation*}
Likewise, $f\in C^k(0,T;X)$ implies that $f$ and its first $k$ weak derivatives 
in time, $\partial_t^j f$ for $j=1,2,\dots,k$, all reside in $C(0,T;X)$.

\subsection{Mesh elements and discrete operators}

For a fixed $h$, the mesh $\mathcal{T}_h$ is composed of simplices,
denoted $T\in \mathcal{T}_h$, and faces (edges in 2D) $e \in \partial
T$.  Let $\Gamma$ denote the complete set of faces of simplices
$T\in\mathcal{T}_h$; then $\Gamma$ can be written as the disjoint
union
\begin{equation*}
  \Gamma = \Gamma_{int} \cup \Gamma_{bd},
\end{equation*} 
where $e\in\Gamma_{int}$ if $e$ is an interior edge and $e\in\Gamma_{bd}$ if
$e$ is a boundary edge.

Let $f$ be a scalar or vector valued function and suppose $e$ is an
interior edge $e \in T_+ \cap T_-$ where $T_+$ and and $T_-$ are two
simplices with an arbitrary but fixed choice of labeling for the
pairing. We denote by $n_e$ the outward facing normal associated to
$T_+$. We use an explicit jump operator defined, for
$e\in\Gamma_{int}$, by
\begin{equation}\label{eqn:jump-operator}
[f] = f_+ - f_-
\end{equation}
where $f_+$ denotes $f$ restricted to $e\in T_+$ and $f_-$ denotes $f$
restricted to $e\in T_-$.  For an edge $e\in\Gamma_{bd}$ we have that
there exists only one $T_+ = T \in \mathcal{T}_h$ such that $e\in
\partial T$ and in this case we define
\begin{equation*}
[f] = f_+ .
\end{equation*}

\subsection{Boundary and initial conditions}

We assume homogeneous boundary conditions for the displacement and
pressures; though, as in \citep{ern-meunier-2009}, these conditions can
easily be generalized \citep{showalter-2000}.

\section{Coupled elliptic-parabolic problems as a setting for poroelasticity}
\label{sec:general-setting}

To consider the a posteriori error analysis of the generalized
poroelasticity equations~\eqref{eq:mpet}, we follow the general
framework for a posteriori error analysis for coupled
elliptic-parabolic problems presented by~\citet{ern-meunier-2009}. In
Section~\ref{sec:general-setting:basic} below, we briefly overview
this general framework and its application to Biot's equations. Next,
we show that \eqref{eq:mpet} can be addressed using this general
framework, also for the case where $J > 1$ under appropriate
assumptions on the transfer terms $T_{m\rightarrow n}$, in
Section~\ref{sec:general-setting:extension}. Based on the general
framework, Ern and Meunier derive and analyze several a posteriori
error estimators. These estimators, and their corresponding extensions
to the generalized poroelasticity equations, will be discussed in
Section \ref{sec:aposteriori}.

\subsection{The coupled elliptic-parabolic problem framework}
\label{sec:general-setting:basic}
The setting introduced by Ern and Meunier~\citep{ern-meunier-2009} for
coupled elliptic-parabolic problems provides a natural setting also
for generalized poroelasticity. The general coupled elliptic-parabolic
problem reads as: find $(u, p) \in H^1(0,T;V_a) \times
H^1(0,T;V_d)$ that satisfy (for almost every $t \in [0, T]$):

\begin{subequations}
  \label{eq:general}
  \begin{align}
    a(u, v) - b(v,p) &= \dual{f}{v}{V_a^{\ast} \times V_a},
    \quad \foralls v\in V_a,\label{eq:general-a}\\
    c(\partial_t p,q) + b(\partial_t u,q) + d(p,q) &=
    \dual{g}{q}{V_d^{\ast} \times V_d} .
    \quad\foralls q\in V_d.\label{eq:general-b}
  \end{align}
\end{subequations}
The data, $f$ and $g$ in~\eqref{eq:mpet}, are general and assumed to satisfy 
$f\in H^1(0,T;V_a^*)$ and $g \in H^1(0,T;V_d^*)$.  The initial pressure is
assumed to satisfy $p(0) \in V_d$. Moreover, it is assumed that
\begin{enumerate}
\item
  $V_a$ and $V_d$ are Hilbert spaces. 
\item
  $a: V_a \times V_a \rightarrow \R$ and $d: V_d \times V_d
  \rightarrow \R $ are symmetric, coercive, and continuous bilinear
  forms, thus inducing associated inner-products and norms (denoted by
  $\|\cdot\|_a$ and $\| \cdot \|_d$) on their respective spaces.
\item
  There exist Hilbert spaces $L_a$ and $L_d$ with $V_a \subset L_a$
  and $V_d \subset L_d$, where the inclusion is dense and such that
  $\nrm{f}{L_a} \ls \nrm{f}{a}$ for $f \in V_a$, and $\nrm{g}{L_d} \ls
  \nrm{g}{d}$ for $g \in V_d$.
\item
   $c : L_d \times L_d \rightarrow \R$ is symmetric, coercive and
  continuous; thereby defining an equivalent norm $\|\cdot\|_{c}$, on
  $L_d$.
\item
  There exists a continuous bilinear form $b: V_a \times L_d
  \rightarrow \R$ such that $|b(f, g)| \ls \|f\|_{a} \|g\|_{c}$ for
  $f \in V_a$ and $g\in L_d$.
\end{enumerate}

\begin{example}
  Biot's equations of poroelasticity (i.e.~\eqref{eq:mpet} for $J=1$
  fluid networks) fit the coupled elliptic-parabolic framework with
  \begin{equation*}
    V_a = \left[H_0^1\right]^d, \quad
    L_a = \left[L^2\right]^d, \quad
    V_d = H_0^1, \quad
    L_d = L^2.
  \end{equation*}
and 
\begin{align*}
  a(u,v) &= \inner{\sigma(u)}{\varepsilon(v)}, \\ 
  b(u, p) &= \inner{\alpha_1 p}{\Div u}, \quad 
  c(p,q) = \inner{c_1 p}{q}, \quad
  d(p,q) = \inner{\kappa_1 \Grad p}{\Grad q},
\end{align*}
with the standard (vector) $H^1_0$-inner product and norm on $V_a$ and
$V_d$, and $L^2$-inner product and norm on $L_a$ and $L_d$. It is
readily verifiable that the general conditions described above are
satisfied under these choices of spaces, norms, inner products and
forms~\citep{ern-meunier-2009}.  \changed{The existence and uniqueness of 
solutions to Biot's equations of poroelasticity ($J=1$ in \eqref{eq:mpet} and 
\eqref{eq:general} with the above bilinear forms) is now a classical result 
\cite{showalter-2000}.}
\end{example}

\subsection{Generalized poroelasticity as a coupled elliptic-parabolic problem}
\label{sec:general-setting:extension}
In this section, we derive a variational formulation of the
generalized poroelasticity equations~\eqref{eq:mpet} for
the case of several fluid networks (i.e.~$J \geq 1$) and show
how this formulation fits the general framework presented above. The
extension from Biot's equations to generalized poroelasticity is
natural in the sense it coincides with the original application of the
general framework to Biot's equations when $J=1$.  %
Suppose that the total number of networks $J$ is arbitrary
but fixed. We define the spaces
\begin{equation}
  \label{eq:spaces}
  V_{a} = \left[H_0^1\right]^d, \quad
  L_{a} = \left[L^2\right]^d, \quad
  V_{d} = \left[H_0^1\right]^{J}, \quad 
  L_{d} = \left[L^2\right]^{J}.
\end{equation}
We consider data such that $f \in H^1(0,T; L_{a})$ and $(g_1, g_2,
\dots, g_{J}) \in H^1(0,T; L_d)$ with given initial network pressures
determined by $p(0) \in V_d$. A standard multiplication, integration
and integration by parts yield the following variational formulation
of~\eqref{eq:mpet}: find $u \in H^1(0, T; V_a)$ and $p = (p_1, \dots,
p_J) \in H^1(0, T; V_d)$ such that for a.e.~$t \in (0, T]$:
\begin{subequations}
  \label{eq:weak}
  \begin{align}
  \inner{\sigma(u)}{\varepsilon(v)} {-} \ssum_{j=1}^J \inner{\alpha_j p_j}{\Div v}
  &= \inner{f}{v}_{},\label{eq:weak:a} \\
  \ssum_{j=1}^J \inner{\partial_t s_j p_j}{q_j} + \inner{\partial_t \alpha_j \Div u}{q_j} + \inner{\kappa_j \Grad p_j}{\Grad q_j} + \inner{T_j}{q_j}
  &= \ssum_{j=1}^J \inner{g_j}{q_j}_{}. \label{eq:weak:b}
  \end{align}
\end{subequations}
for $v \in V_a$, $q = (q_1, \dots, q_J) \in V_d$. As noted in 
\citep{ern-meunier-2009}, \eqref{eq:weak:a} holds up to time $t=0$ so that $u_0$ 
is determined by the initial data $p(0)$ and initial right-hand side $f(0)$.  %
By labeling the forms
\begin{subequations}
  \label{eq:forms}
  \begin{align}
  a(u, v) &= \inner{\sigma(u)}{\varepsilon(v)}, \label{eq:forms-a}\\
  b(u, p) &= \ssum_{j=1}^J \inner{\alpha_j p_j}{\Div u}, 
  \label{eq:b} \\
  c(p, q) &= \ssum_{j=1}^J \inner{s_j p_j}{q_j}, \label{eq:forms-c}\\
  d(p, q) &= \ssum_{j=1}^J \inner{\kappa_j \Grad p_j}{\Grad q_j} + \inner{T_j}{q_j},
  \label{eq:d}
\end{align}
\end{subequations}
we observe that the weak formulation~\eqref{eq:weak} of~\eqref{eq:mpet} 
takes the form~\eqref{eq:general} where $T_j$, in \eqref{eq:d}, is given by 
\eqref{eq:def:T_j}.  

\changed{%
\begin{rem}
Existence and uniqueness of solutions to \eqref{eq:weak}, with forms \eqref{eq:forms}, for the case 
of $J=2$ (Barenblatt-Biot) have been established \cite{showalter-2002}.  However, to our knowledge, 
a rigorous treatment of existence and uniqueness of solutions to \eqref{eq:mpet}, \eqref{eq:weak} 
for $J>2$ remains an open problem despite their otherwise successful use in applications 
\cite{vardakis-ventikos-2018,guo2019validation,tully-ventikos-2011,vardakis-ventikos2016}.
\end{rem}
}

We now show that the associated assumptions on these forms
and spaces hold, beginning with properties of the form $d$ in
Lemma~\ref{lemma:d} below.
\begin{lemma}
  \label{lemma:d}
  The form $d$ given by~\eqref{eq:d} defines an inner product over
  $[H^1_0(\Omega)]^J$ with associated norm 
  \begin{equation}
    \label{eqn:general-setting:extension-D-norm}
    \| q \|_d^2 = d(q, q) = \ssum_{j=1}^{J} \| \Grad q_j \|_{\kappa_j}^2 + | q |_{T}^2, \quad \foralls q \in V_d,
  \end{equation}
  where $|\cdot|_{T}$ is defined by~\eqref{eq:def:T}, which is such that
  \begin{equation}
    \label{eq:dnorm:bound}
    \|q\|_{d} \ls \|q\|_{H^1_0}, \quad \foralls q \in [H^1_0]^J,
  \end{equation}
  with inequality constant depending on $J$, $\kappa_{\max}$,
  $\gamma_{\max}$ and $\Omega$.
\end{lemma}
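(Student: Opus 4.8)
The plan is to verify the three claims in turn: that $d$ is symmetric, that $d$ is positive definite (and hence an inner product), and that the induced norm is controlled by the $H^1_0$-norm. The diffusion part $\sum_{j} \inner{\kappa_j \Grad p_j}{\Grad q_j}$ is manifestly symmetric and positive on $[H^1_0]^J$ by the bound $\kappa_j > 0$ together with the Poincar\'e inequality, so the substance of the argument lies in handling the transfer contribution $\sum_j \inner{T_j}{q_j}$.

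First I would symmetrize the transfer term. Expanding via \eqref{eq:def:T_j},
\[
\sum_{j=1}^J \inner{T_j}{q_j} = \sum_{i,j=1}^J \gamma_{ji} \inner{p_j - p_i}{q_j}.
\]
Pairing the $(i,j)$ and $(j,i)$ summands and invoking the symmetry $\gamma_{ji}=\gamma_{ij}$ (with $\gamma_{ii}=0$) collapses this into
\[
\sum_{j=1}^J \inner{T_j}{q_j} = \sum_{i<j} \gamma_{ji}\inner{p_j-p_i}{q_j - q_i},
\]
which is visibly symmetric under $p \leftrightarrow q$. Setting $p=q$ recovers the seminorm $|q|_T^2 = \sum_{i<j}\gamma_{ji}\nrm{q_j-q_i}{}^2$ of \eqref{eq:def:T}, and since each $\gamma_{ji}\geq 0$ this is nonnegative; this simultaneously delivers symmetry of $d$ and the identity \eqref{eqn:general-setting:extension-D-norm}.

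Next I would establish positive definiteness. From \eqref{eqn:general-setting:extension-D-norm} we have $d(q,q)\geq 0$, and if $d(q,q)=0$ then in particular $\nrm{\Grad q_j}{\kappa_j}=0$ for each $j$; since $\kappa_j>0$ this forces $\Grad q_j = 0$, so $q_j$ is constant, and the vanishing trace of $q_j\in H^1_0$ then yields $q_j\equiv 0$. Hence $d$ is an inner product on $[H^1_0]^J$. For the bound \eqref{eq:dnorm:bound} I would estimate the two pieces of $\nrm{q}{d}^2$ separately. The diffusion part satisfies $\sum_j \nrm{\Grad q_j}{\kappa_j}^2 \leq \kappa_{\max}\sum_j \nrm{\Grad q_j}{}^2 \leq \kappa_{\max}\nrm{q}{H^1_0}^2$. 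For the transfer seminorm, the triangle inequality gives $\nrm{q_j - q_i}{}^2 \leq 2\nrm{q_i}{}^2 + 2\nrm{q_j}{}^2$; summing over the index pairs, each $\nrm{q_k}{}^2$ occurs in exactly $J-1$ of them, so $|q|_T^2 \leq 2(J-1)\gamma_{\max}\sum_k \nrm{q_k}{}^2$. Applying the Poincar\'e inequality on $H^1_0$ --- whose constant carries the dependence on $\Omega$ --- to bound $\nrm{q_k}{} \ls \nrm{\Grad q_k}{}$ then gives $|q|_T^2 \ls (J-1)\gamma_{\max}\nrm{q}{H^1_0}^2$. Adding the two estimates produces \eqref{eq:dnorm:bound} with the stated dependence on $J$, $\kappa_{\max}$, $\gamma_{\max}$ and $\Omega$.

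The only step requiring genuine care is the symmetrization of the transfer term; once that pairing is performed, the remaining positivity and continuity estimates are routine consequences of the parameter bounds together with the Poincar\'e inequality.
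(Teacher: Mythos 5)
Your proposal is correct and follows essentially the same route as the paper: symmetrize the transfer term using $\gamma_{ji}=\gamma_{ij}$ to exhibit it as a semi-inner product (your sum over $i<j$ is identical to the paper's $\tfrac12\sum_{i,j}$ form), deduce definiteness from $\kappa_{\min}>0$ together with the zero trace, and obtain \eqref{eq:dnorm:bound} from the triangle and Poincar\'e inequalities. Your version is slightly more explicit about the definiteness step and the combinatorial count $2(J-1)$, but there is no substantive difference.
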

\begin{proof}
By definition~\eqref{eq:def:T_j} and the assumption of symmetric
transfer $\gamma_{ji} = \gamma_{ij} \geq 0$, we have 
\begin{equation}
  \label{eq:def:T}
  \ssum_{j=1}^J \inner{T_j}{q_j} =
  \ssum_{j=1}^J \ssum_{i=1}^J \inner{\gamma_{ji} (p_j - p_i)}{q_j} = \frac{1}{2} \ssum_{j=1}^J \ssum_{i=1}^J \inner{\gamma_{ji} (p_j - p_i)}{(q_j - q_i)} .
\end{equation}
Given $p$, $q\in V_d$, the bilinear form defined by \eqref{eq:def:T}, that is 
\[
\inner{p}{q}_{T} = \ssum_{j=1}^J \inner{T_j}{q_j},
\] 
is clearly symmetric and satisfies the requirements of a (real) semi-inner 
product on $L_{d} \times L_{d}$ in the sense of \citep{conway-book-1997}.
It follows that
\begin{equation}
  \label{eq:def:seminorm}
  |q|_{T} \equiv \inner{q}{q}_T^{1/2} = \left( \frac{1}{2} \ssum_{j=1}^J \ssum_{i=1}^J \inner{\gamma_{ji} (q_j - q_i)}{(q_j - q_i)} \right)^{1/2}
\end{equation}
defines a semi-norm on $L_{d} \times L_{d}$ and that the corresponding
Cauchy-Schwarz inequality holds. Using the triangle
inequality, the definition \eqref{eq:def:T}, the bounds for $\gamma_{ji}$ and the
Poincar\'e inequality, we have that
\begin{equation}
  \label{eq:def:seminorm-poincare}
  | q |_{T} \ls \| q \|_{[L^2]^J} \ls \| q \|_{[H^1_0]^J}
\end{equation}
with constant depending on $\gamma_{\max}$, $J$, and the domain via
the Poincar\'e constant. Under the assumption that $\kappa_{\min} >
0$, we observe that as a result $d$ defines an inner product and norm
on $[H^1_0]^J \times [H^1_0]^J$. Similarly, \eqref{eq:dnorm:bound}
holds with with constant depending on $\kappa_{\max}$ in addition to
$\gamma_{\max}$, $J$, and the domain $\Omega$.
\end{proof}
Lemma~\ref{lemma:d} will be used in the subsequent sections. %
We next show that the choices of spaces \eqref{eq:spaces} and
forms~\eqref{eq:forms} satisfy the abstract assumptions of the
framework as overviewed in Section \ref{sec:general-setting:basic},
and summarize this result in Lemma~\ref{lemma:mpet}.
\begin{lemma}
  \label{lemma:mpet}
  The problem \eqref{eq:weak}, arising from the equations of
  generalized poroelasticity \eqref{eq:mpet} with material parameters as in 
  Section~\ref{sec:introduction:material-parameters}, posed on the spaces
  \eqref{eq:spaces} with bilinear forms defined via \eqref{eq:spaces}
  is a coupled elliptic-parabolic problem and satisfy the assumptions
  set forth in \citep{ern-meunier-2009}.
\end{lemma}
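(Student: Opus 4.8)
\emph{Proof proposal.} The plan is to verify the five abstract assumptions (1)--(5) of Section~\ref{sec:general-setting:basic} in turn, for the spaces \eqref{eq:spaces} and the forms \eqref{eq:forms}. Assumption (1) is immediate, since $V_a = [H_0^1]^d$ and $V_d = [H_0^1]^J$ are finite products of the Hilbert space $H_0^1$ and hence Hilbert spaces. A large part of assumption (2), namely the properties of $d$, has in fact already been dispatched in Lemma~\ref{lemma:d}, so the remaining work concentrates on the elliptic form $a$ together with conditions (3)--(5).

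For the form $a$, I would first rewrite $a(u,v) = 2\mu\inner{\e{u}}{\e{v}} + \lambda\inner{\Div u}{\Div v}$, using $\tr{\e{u}} = \Div u$ and $\Id : \e{v} = \Div v$; both terms are manifestly symmetric, which gives symmetry. Continuity follows from Cauchy--Schwarz together with $\nrm{\sig{u}}{} \ls \nrm{\e{u}}{} \ls \nrm{u}{H^1_0}$, the constant depending on $\mu$ and $\lambda$. The only genuinely analytic ingredient is coercivity: here I would invoke Korn's inequality, $\nrm{\e{u}}{} \gtrsim \nrm{u}{H^1_0}$ for $u \in [H_0^1]^d$, combined with the ellipticity bounds $\mu > 0$, $2\mu+\lambda > 0$ of Section~\ref{sec:introduction:material-parameters}, to conclude $a(u,u) \gtrsim \nrm{u}{H^1_0}^2$; hence $\nrm{\cdot}{a}$ is an inner-product norm equivalent to the $[H_0^1]^d$-norm. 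For $d$, Lemma~\ref{lemma:d} already supplies symmetry, the norm identity \eqref{eqn:general-setting:extension-D-norm}, and the continuity bound \eqref{eq:dnorm:bound}; it remains only to record coercivity, which follows from $\nrm{q}{d}^2 \geq \kappa_{\min}\ssum_{j}\nrm{\Grad q_j}{}^2 \gtrsim \nrm{q}{H^1_0}^2$ via $\kappa_{\min} > 0$ and Poincar\'e's inequality, the transfer seminorm $|q|_T^2$ being nonnegative. This completes (2).

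Assumption (4) is elementary: $c(p,q) = \ssum_{j}\inner{s_j p_j}{q_j}$ is symmetric, and $s_{\min}\nrm{p}{L_d}^2 \le c(p,p) \le s_{\max}\nrm{p}{L_d}^2$ with $0 < s_{\min} \le s_{\max}$, so $\nrm{\cdot}{c}$ is an equivalent norm on $L_d = [L^2]^J$. For assumption (3), the inclusions $[H_0^1]^d \subset [L^2]^d$ and $[H_0^1]^J \subset [L^2]^J$ are dense by density of $H_0^1$ in $L^2$, and the bounds $\nrm{f}{L_a} \ls \nrm{f}{a}$ and $\nrm{g}{L_d} \ls \nrm{g}{d}$ follow by combining $\nrm{\cdot}{} \le \nrm{\cdot}{H^1_0}$ with the norm equivalences established in (2). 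Finally, for assumption (5) I would write $b(u,p) = \inner{\ssum_{j}\alpha_j p_j}{\Div u}$, so that Cauchy--Schwarz gives $|b(u,p)| \le \alpha_{\max}\sqrt{J}\,\nrm{p}{L_d}\,\nrm{\Div u}{} \ls \nrm{p}{c}\,\nrm{u}{a}$, using $\nrm{\Div u}{} \ls \nrm{u}{a}$ and $\nrm{p}{L_d} \approx \nrm{p}{c}$; note that only $L^2$-regularity of $p$ is needed, consistent with $b : V_a \times L_d \to \R$.

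The only step calling for a nontrivial functional-analytic tool is the coercivity of $a$, which rests on Korn's inequality; every other verification reduces to Cauchy--Schwarz, Poincar\'e's inequality, and the uniform parameter bounds of Section~\ref{sec:introduction:material-parameters}, with the transfer-term contributions to $d$ already handled in Lemma~\ref{lemma:d}. I therefore expect the main delicacy to be bookkeeping of the constants (tracking the dependence on $J$, $\kappa_{\min}$, $\kappa_{\max}$, $\alpha_{\max}$, $s_{\min}$, $s_{\max}$, and the Korn and Poincar\'e constants) rather than any conceptual obstacle.
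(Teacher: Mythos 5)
Your proposal is correct and follows essentially the same route as the paper's proof: both verify assumptions (1)--(5) in order, delegating the properties of $d$ to Lemma~\ref{lemma:d}, obtaining coercivity of $a$ from Korn's inequality and the bounds on $\mu$ and $\lambda$, the embeddings from Poincar\'e's inequality, and the continuity of $b$ from Cauchy--Schwarz together with the equivalence of $\nrm{\cdot}{c}$ with the $[L^2]^J$-norm. The only differences are cosmetic (your explicit decomposition $a(u,v)=2\mu\inner{\e{u}}{\e{v}}+\lambda\inner{\Div u}{\Div v}$ and slightly more careful constant tracking), so no further comparison is needed.
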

\begin{proof}
  We consider each assumption in order. These are standard results,
  but explicitly included here for the sake of future reference.
  \begin{enumerate}
  \item
    $V_a$ and $V_d$ defined by \eqref{eq:spaces} are clearly Hilbert
    spaces with natural Sobolev norms $\|\cdot\|_{H^1_0}$.
  \item
    $a$ is symmetric, coercive on $V_a$ by Korn's inequality and the
    lower bounds on $\mu$, $2 \mu + d \lambda$, and continuous (with
    continuity constant depending on $\mu_{\max}$ and
    $\lambda_{\max}$). $d$ is clearly symmetric by the transfer
    symmetry assumption and~\eqref{eq:def:T}, coercive by $|\cdot|_{T}
    \geq e 0$ and the assumption that $\kappa_{\min} > 0$:
    \begin{equation*}
      d(q, q) \geq \ssum_{j=1}^{J} \inner{\Grad q_j}{\Grad q_j}_{\kappa_j}
      \geq \kappa_{\min} \ssum_{j=1}^{J} \| q_j\|_{H_0^1}^2 ,
    \end{equation*}
    and continuous by Lemma~\ref{lemma:d}.
  \item
    The embedding of $(V_a, \|\cdot \|_{a})$ into $L_a$ follows from
    Poincare's inequality and the coercivity of $a$ over $V_a$ and
    similarly for $V_d \hookrightarrow L_d$.
  \item
    $c$ is symmetric by definition, continuous over $L_d$ with
    continuity constant depending on $c_{\max}$, and coercive with
    coercivity constant depending on $c_{\min} > 0$.
  \item
    The form $b$ given by~\eqref{eq:b} is clearly bilinear and
    continuous on $V_{a} \times L_{d}$ as
    \begin{align*}
      |b(u,p)| &= %
      |\suml{j=1}{J}\dual{p_j}{\Div u}{\alpha_j}|	%
      \ls \nrm{u}{H^1}\left(\suml{j=1}{J}\nrm{p_j}{}^2\right)^{\frac12} %
      = \nrm{u}{H^1}\nrm{p}{[L^2]^J} 
      \ls \|u\|_{a} \|p\|_c
    \end{align*}
    by applying Cauchy-Schwarz and H{\"o}lder's inequality, with
    constant depending on $\alpha_{\min} > 0$ and the coercivity
    constants of $a$ and $c$. \qedhere
  \end{enumerate}
\end{proof}

In light of Lemma~\ref{lemma:mpet}, the generalized poroelasticity system 
\eqref{eq:weak} is of coupled elliptic-parabolic type and takes the form 
of \eqref{eq:general} with bilinear forms defined by \eqref{eq:forms}. 
\begin{cor}\label{cor:energy-est}
The following energy estimates hold for almost every $t\in [0,T]$
\begin{align*}
\nrm{u(t)}{a}^2 + &\ssum\limits_{j=1}^J s_j \nrm{p_j(t)}{}^2 + %
\ssum\limits_{j=1}^{J}\int_{0}^{t}\kappa_j\nrm{\Grad p(s)}{}^2\,\text{d}s + %
\ssum\limits_{i=1}^{J}\ssum\limits_{j=1}^{J}\int_{0}^{t} %
\gamma_{ij}\nrm{p_j(s) - p_i(s)}{}^2\, \text{d}s \\ %
&\ls \left(\sup\limits_{s\in[0,T]} \nrm{f(s)}{} + 
\int_{0}^{T}\nrm{\partial_t f(s)}{}\, \text{d}s\right)^2 + 
\int_{0}^{T} \nrm{g(s)}{}^2\,\text{d}s + \nrm{u_0}{a}^2 + %
\ssum\limits_{j=1}^{J} s_j\nrm{p_0}{}^2
\end{align*} 
\end{cor}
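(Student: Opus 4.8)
The plan is to carry out the standard energy argument for coupled elliptic-parabolic systems, in which each equation of \eqref{eq:general} is tested with the pairing that makes the coupling cancel. Concretely, I would test the elliptic relation \eqref{eq:general-a} with $v = \partial_t u$ — admissible since $u \in H^1(0,T;V_a)$, so $\partial_t u(t) \in V_a$ for a.e.~$t$ — and the parabolic relation \eqref{eq:general-b} with $q = p$, then add the two identities. The key structural observation is that the two occurrences of $b(\partial_t u, p)$ carry opposite signs and cancel, leaving
\[
  a(u,\partial_t u) + c(\partial_t p, p) + d(p,p) = \inner{f}{\partial_t u} + \inner{g}{p}.
\]
Because $a$ and $c$ are symmetric (Assumptions (2) and (4)), the first two terms equal $\tfrac12 \frac{d}{dt}\|u\|_a^2$ and $\tfrac12\frac{d}{dt}\|p\|_c^2$, while $d(p,p) = \|p\|_d^2$ by Lemma~\ref{lemma:d}. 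Integrating over $(0,t)$ then produces the energy $E(t) := \tfrac12\|u(t)\|_a^2 + \tfrac12\|p(t)\|_c^2 + \int_0^t \|p(s)\|_d^2\,\ds$ on the left, against the initial data $\tfrac12\|u_0\|_a^2 + \tfrac12\|p(0)\|_c^2$ and the two forcing integrals on the right.

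The term $\inner{g}{p}$ is routine: by Cauchy--Schwarz, the embedding $\|p\|_{L_d}\lesssim\|p\|_d$ of Assumption (3), and Young's inequality it is bounded by $\delta\|p\|_d^2 + C_\delta\|g\|^2$, whose first part is absorbed into $\int_0^t\|p\|_d^2$ after integration. The term $\inner{f}{\partial_t u}$ is the crux of the argument and the step I expect to be the main obstacle, since it contains $\partial_t u$, which appears nowhere on the left. I would treat it by integrating by parts in time — legitimate because $f \in H^1(0,T;L_a)$ and $u \in H^1(0,T;V_a)\hookrightarrow C([0,T];V_a)$ — to write $\int_0^t \inner{f}{\partial_t u}\,\ds = \inner{f(t)}{u(t)} - \inner{f(0)}{u_0} - \int_0^t \inner{\partial_t f}{u}\,\ds$. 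Using $\|v\|_{L_a}\lesssim\|v\|_a$, the endpoint term at $t$ splits by Young's inequality into $\delta\|u(t)\|_a^2$ (absorbed) plus $C_\delta\sup_s\|f(s)\|^2$; the term at $0$ is controlled by the initial data since $\|f(0)\|\le\sup_s\|f(s)\|$; and the residual integral is bounded by $\big(\sup_s\|u(s)\|_a\big)\int_0^T\|\partial_t f\|\,\ds$.

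The final step is absorption. Writing $M := \sup_{t\in[0,T]}E(t)$, the pointwise bound $\|u(s)\|_a \le \sqrt{2E(s)} \le \sqrt{2M}$ converts the last cross term into $\sqrt{2M}\int_0^T\|\partial_t f\|\,\ds \le \delta M + C_\delta\big(\int_0^T\|\partial_t f\|\,\ds\big)^2$. Crucially, no right-hand side contribution has the form $\int_0^t E(s)\,\ds$, so no Gr\"onwall inequality is needed: taking the supremum over $t\in[0,T]$ and choosing $\delta$ small enough that the accumulated multiples of $M$ sum to less than one lets me absorb them on the left and bound $M$ by the data alone. It then remains only to rewrite the abstract norms concretely, using $\|p\|_c^2 = \ssum_{j} s_j\|p_j\|^2$ from \eqref{eq:forms-c} and $\|p\|_d^2 = \ssum_j \|\Grad p_j\|_{\kappa_j}^2 + |p|_T^2$ with $|p|_T^2 = \tfrac12\ssum_{i,j}\gamma_{ij}\|p_j - p_i\|^2$ from Lemma~\ref{lemma:d} and \eqref{eq:def:seminorm}, so that $\int_0^t\|p\|_d^2$ reproduces the $\kappa_j$- and $\gamma_{ij}$-integrals on the left of the stated estimate. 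Finally, the elementary inequality $\sup_s\|f\|^2 + \big(\int_0^T\|\partial_t f\|\,\ds\big)^2 \le \big(\sup_s\|f\| + \int_0^T\|\partial_t f\|\,\ds\big)^2$ recasts the $f$-contributions into the form written in the corollary, which completes the estimate.
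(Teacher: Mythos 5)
Your argument is correct, but it takes a genuinely different route from the paper: the paper's proof is a one-line appeal to the abstract energy estimate for coupled elliptic-parabolic systems (Prop.~2.1 of Ern--Meunier), combined with Lemma~\ref{lemma:mpet} (which verifies that MPET fits that framework) and the identification of the abstract norms $\|\cdot\|_a$, $\|\cdot\|_c$, $\|\cdot\|_d$ with the concrete $s_j$-, $\kappa_j$- and $\gamma_{ij}$-weighted quantities via \eqref{eq:forms} and \eqref{eq:def:seminorm}. You instead re-derive that abstract estimate from scratch, and you do so correctly: the test pair $(v,q)=(\partial_t u, p)$ with cancellation of $b(\partial_t u,p)$, the temporal integration by parts on $\int_0^t\inner{f}{\partial_t u}\ds$ (which is precisely what produces the $\sup_s\nrm{f(s)}{}+\int_0^T\nrm{\partial_t f(s)}{}\ds$ structure on the right-hand side), and the absorption via $M=\sup_t E(t)$ without Gr\"onwall are all the standard ingredients of the cited proposition. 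What your route buys is self-containedness and transparency about where each term on the right originates; what the paper's route buys is brevity and the ability to inherit the framework's constants wholesale. One minor point of divergence: the paper's proof additionally asserts that the implied constant is independent of all material parameters, whereas your bounds for $\inner{g}{p}$ and $\inner{f}{\partial_t u}$ invoke the embeddings $\nrm{p}{L_d}\ls\nrm{p}{d}$ and $\nrm{u}{L_a}\ls\nrm{u}{a}$ of Assumption~(3), whose constants depend on $\kappa_{\min}$, $\mu$ and the Poincar\'e/Korn constants; this does not affect the validity of the stated $\ls$ inequality, but it would need more care if one wanted the parameter-independence claimed in the paper's proof.
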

\begin{proof}
The proof follows directly from Lemma~\ref{lemma:mpet}, the corresponding 
energy estimates for coupled elliptic-parabolic systems 
\cite[Prop. 2.1]{ern-meunier-2009} and the definition of the norms arising 
from the forms \eqref{eq:forms}.  Moreover, the proportionality constant in 
the estimates is independent of all material parameters and the number of 
networks.
\end{proof}

\begin{rem}
The elliptic-parabolic MPET energy estimates, of Corollary~\ref{cor:energy-est}, 
are similar to those of the total pressure formulation \cite[Theorem 3.3]{lee2019mixed} when 
the second Lam{\'e} coefficient, $\lambda$, is held constant in the latter.  
The primary difference is that \citep{lee2019mixed} separates the estimates of 
$u$ from that of the solid pressure, $\lambda \Div u$, by including the latter 
term into a `total pressure' variable.  This allows for $\nrm{u}{1}$ to be 
estimated directly, in \citep{lee2019mixed}, regardless of the value of 
$\lambda$ used in the definition of $\nrm{u}{a}$.
\end{rem}

\begin{rem}
The conditions of Section~\ref{sec:general-setting:basic}, i.e.~conditions 
(2)-(5), can place restrictions on the generalized poroelastic setting.  As an 
example, the assumption of a vanishing storage coefficient has appeared in the 
literature as a modeling simplification \citep{lotfian2018,young-riviere-2014}.
However, the coercivity requirement of condition (4) precludes the use of 
a vanishing specific storage coefficient, $s_j$ in \eqref{eq:forms-c}, for any 
network number $j=1,2,\dots,J$.  Care should be taken to ensure 
that any modeling simplifications produce forms that satisfy the conditions 
of Section~\ref{sec:general-setting:basic} in order for the results of 
Corollary~\ref{cor:energy-est} and 
Section~\ref{sec:discretizations-and-apriori-est} to hold.
\end{rem}

\section{Discretization and a priori error estimates}
\label{sec:discretizations-and-apriori-est}
\subsection{An Euler-Galerkin discrete scheme}
\label{sec:general-setting:discrete-scheme}

We now turn to an Euler-Galerkin discretization of~\eqref{eq:mpet} in
the context of such discretizations of coupled elliptic-parabolic
problems in general~\citep{ern-meunier-2009}. We employ an implicit
Euler discretization in time and conforming finite elements in
space.

We consider a family of simplicial meshes
$\left\{\mathcal{T}_{h}\right\}_{h>0}$ with $h$ a characteristic mesh
size such as the maximal element diameter
\begin{equation*} 
h = \max\left\{h_K = \textrm{diam}(K) \,| \, K\in \mathcal{K}_h\right\}.
\end{equation*}
Furthermore, let $\{V_{a,h}\}_{h}$ and $\{V_{d,h}\}_{h}$ denote two
families of finite dimensional subspaces of $V_{a}$ and $V_{d}$, as in
\eqref{eq:spaces}, respectively, defined relative to
$\{\mathcal{T}_h\}_h$.  For a final time $T > 0$ we let $0 = t_0 < t_1
< \dots < t_N = T$ denote a sequence of discrete times and set $\tau_n
= t_n - t_{n-1}$. For functions and fields, we use the superscript $n$
to refer values at time point $t_n$. We also utilize the discrete time
differential notation $\delta_t$ where
\begin{equation}
  \delta_t u_h^n = \tau_n^{-1} \left(u_h^n - u_h^{n-1} \right).
\end{equation}

With this notation, the discrete problem is to seek $u_{h}^{n} \in
V_{a,h}$ and $p_{h}^{n} = (p_{1,h}^n, p_{2,h}^n ,\dots,p_{J,h}^n) \in
V_{d,h}$ such that for all time steps $t_n$ with $n \in
\left\{1, 2, \dots, N \right\}$:
\begin{subequations}
  \label{eq:discrete}
  \begin{alignat}{3}
    a(u_h^n,v_h) - b(v_h,p_{h}^n) &= \inner{f_h^n}{v_h}  && \quad \foralls v_h \in V_{a,h},\label{eq:discrete:a}\\
    c(\delta_t p_{h}^n,q_{h}) + b(\delta_t u_h^n,q_{h}) + d(p_{h}^n,q_{h}) &= \inner{g_{h}^n}{q_{h}} && \quad\foralls q_{h} \in V_{d,h},\label{eq:discrete:b}
  \end{alignat}
\end{subequations}
where the spaces and forms are defined by~\eqref{eq:spaces}
and~\eqref{eq:forms}.  The right-hand sides, above, express the inner product 
of the discrete approximations $f_h^{n}\in L_{a,h}$, to $f$ and 
$g_{h}^{n}\in L_{d,h}$, to $g$, at time $t_n$. By Lemma~\ref{lemma:mpet} and \cite[Lemma 2.1]{ern-meunier-2009}, the discrete 
system~\eqref{eq:discrete} is well-posed.

\subsection{A priori error estimates}
Now, let $V_{a,h}$ and $V_{d,h}$ be spatial discretizations arising
from continuous Lagrange elements of order $k_a$ and $k_d$,
respectively, where $k_d = k_a - 1$; this relation on relative degree
results directly from the framework hypotheses \cite[Section
  2]{ern-meunier-2009}. Let $\mathcal{P}_k(T)$ denote polynomials of order 
$k$ on a simplex $T \in \mathcal{T}_h$. We consider the continuous Lagrange 
polynomials of order $k_a$ and $k_{a-1}$ defined by 
\begin{align}
V_{a, h} &= \left\{ v_h \in C^0(\bar{\Omega}) \,|\, v_{h|T} \in \mathcal{P}_{k_a}(T) \text{ for every } T\in \mathcal{T}_h\right\},\label{eqn:disc-spaces:vah}\\ 
V_{d, h} &= \left[ \left\{ q_h \in C^0(\bar{\Omega}) \,|\, q_{h|T} \in \mathcal{P}_{k_{a}-1}(T) \text{ for every } T\in \mathcal{T}_h\right\}\right]^J \label{eqn:disc-spaces:vdh}
\end{align}
as the discrete spaces for the displacement and network pressures, 
respectively.  
When $J=1$ this choice coincides with the previous \citep{ern-meunier-2009} 
discretization considered for Biot's equations.  %

The general framework stipulates that three hypotheses
\citep[Section~2.5]{ern-meunier-2009}, restated here for completeness,
should be satisfied for the discretization.
\begin{hyp}\label{general-setting-discrete-scheme:hyp-one}
There exists positive real numbers, denoted $s_a$ and $s_d$, and subspaces, 
$W_a \subset V_a$ and $W_d \subset V_d$ equipped with norms $\nrm{\cdot}{W_a}$ 
and $\nrm{\cdot}{W_d}$, such that the following estimates hold 
independently of $h$ 
\begin{align}
\forall v \in W_a,\quad & %
\inf_{v_h \in V_{a,h}}\nrm{v - v_h}{a} \ls h^{s_a}\nrm{v}{W_a}, %
\label{eqn:general-setting:discrete-scheme:approx-hyp-a}\\
\forall q \in W_d,\quad & %
\inf_{q_h \in V_{d,h}}\nrm{q - q_h}{d} \ls h^{s_d}\nrm{q}{W_d}. %
\label{eqn:general-setting:discrete-scheme:approx-hyp-b}
\end{align} 
\end{hyp}
\begin{hyp}\label{general-setting-discrete-scheme:hyp-two}
There exists a real number $\delta$ such that for every $r\in L_d$, the unique 
solution $\phi \in V_d$ for the dual problem
\[
	d(q,r) = c(r,q)\quad \forall q \in V_d,
\]
is such that there exists $\phi_h \in V_{d,h}$ satisfying
\[
	\nrm{\phi-\phi_h}{d} \ls h^{\delta}\nrm{r}{c} .
\]
\end{hyp}
\begin{hyp}\label{general-setting-discrete-scheme:hyp-three}
$s_a$ = $s_d + \delta$
\end{hyp}

We now state the primary result of this section.
\begin{lemma}
  \label{lemma:discrete}
  The discrete two-field variational formulation of the MPET
  equations~\eqref{eq:discrete} with the choice of discrete spaces
  $V_{a, h}$~\eqref{eqn:disc-spaces:vah} and $V_{d,
    h}$~\eqref{eqn:disc-spaces:vdh} satisfy the elliptic-parabolic
  framework hypotheses
  \ref{general-setting-discrete-scheme:hyp-one}--\ref{general-setting-discrete-scheme:hyp-three},
  above.
\end{lemma}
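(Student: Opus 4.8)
The plan is to verify the three framework hypotheses in turn, the crucial preliminary being the norm equivalences $\nrm{\cdot}{a} \sim \nrm{\cdot}{H^1}$ on $V_a$ and $\nrm{\cdot}{d} \sim \nrm{\cdot}{H^1}$ on $V_d$. The former follows from the coercivity and continuity of $a$ established in Lemma~\ref{lemma:mpet} (Korn's inequality together with the bounds on $\mu$ and $2\mu + \lambda$); the latter follows from the coercivity $\nrm{q}{d}^2 \geq \kappa_{\min}\nrm{q}{H^1_0}^2$ recorded in Lemma~\ref{lemma:mpet} together with the upper bound \eqref{eq:dnorm:bound} of Lemma~\ref{lemma:d}. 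Once these equivalences are in place, each hypothesis reduces to a statement in the $H^1$-norm that is amenable to standard finite element interpolation theory on the quasi-uniform family $\{\mathcal{T}_h\}_h$.

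For Hypothesis~\ref{general-setting-discrete-scheme:hyp-one} I would take $s_a = k_a$, $s_d = k_a - 1$, together with the natural regularity spaces $W_a = [H^{k_a+1}]^d \cap V_a$ and $W_d = [H^{k_a}]^J \cap V_d$, equipped with their Sobolev norms. Since $V_{a,h}$ and $V_{d,h}$ consist of continuous Lagrange elements of orders $k_a$ and $k_a - 1$ respectively, the classical Lagrange interpolation estimate gives $\inf_{v_h}\nrm{v - v_h}{H^1} \ls h^{k_a}\nrm{v}{H^{k_a+1}}$ and $\inf_{q_h}\nrm{q - q_h}{H^1} \ls h^{k_a - 1}\nrm{q}{H^{k_a}}$; passing to the $a$- and $d$-norms via the equivalences above then yields \eqref{eqn:general-setting:discrete-scheme:approx-hyp-a} and \eqref{eqn:general-setting:discrete-scheme:approx-hyp-b}.

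The main work lies in Hypothesis~\ref{general-setting-discrete-scheme:hyp-two}, an Aubin--Nitsche duality estimate. Given $r \in L_d$, the dual problem (using the symmetry of $d$, $d(\phi, q) = c(r, q)$ for all $q \in V_d$) is uniquely solvable by Lax--Milgram, since $d$ is coercive and continuous on $V_d$ and $q \mapsto c(r,q)$ is bounded with $|c(r,q)| \ls \nrm{r}{c}\nrm{q}{d}$ by the embedding of condition~(3). Its strong form is the elliptic system $-\Div(\kappa_j \Grad \phi_j) + \sum_i \gamma_{ji}(\phi_j - \phi_i) = s_j r_j$ with homogeneous Dirichlet data. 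To obtain $H^2$ regularity I would move the zeroth-order transfer coupling to the right-hand side: well-posedness already supplies the a priori bound $\nrm{\phi}{H^1} \ls \nrm{r}{}$, so the coupling $\sum_i \gamma_{ji}(\phi_j - \phi_i)$ lies in $L^2$, and each component solves a scalar problem $-\Div(\kappa_j \Grad \phi_j) = s_j r_j - \sum_i \gamma_{ji}(\phi_j - \phi_i) \in L^2$ with homogeneous Dirichlet data. Because $\Omega$ is convex, scalar elliptic regularity then yields $\phi_j \in H^2$ with $\nrm{\phi_j}{H^2} \ls \nrm{r}{} + \nrm{\phi}{H^1} \ls \nrm{r}{} \sim \nrm{r}{c}$. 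Interpolating $\phi$ into $V_{d,h}$ (order $k_a - 1 \geq 1$) and invoking the norm equivalence gives $\nrm{\phi - \phi_h}{d} \ls h\,\nrm{\phi}{H^2} \ls h\,\nrm{r}{c}$, so $\delta = 1$. I expect the only genuine obstacle here to be organizing this bootstrap so that it closes uniformly in the material parameters, the key point being that the transfer terms couple the $J$ equations only at zeroth order and hence are handled by the scalar convex-domain estimate applied componentwise.

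Finally, Hypothesis~\ref{general-setting-discrete-scheme:hyp-three} is immediate arithmetic: with $s_a = k_a$, $s_d = k_a - 1$ and $\delta = 1$ one has $s_d + \delta = (k_a - 1) + 1 = k_a = s_a$, and the choice $k_d = k_a - 1$ imposed on the discrete spaces is exactly what makes this identity hold.
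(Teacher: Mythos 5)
Your proposal is correct and follows essentially the same route as the paper: the same choices $s_a = k_a$, $s_d = k_a - 1$, the regularity spaces $W_a$, $W_d$, and $\delta = 1$, with Hypothesis~\ref{general-setting-discrete-scheme:hyp-one} from standard Lagrange interpolation and Hypothesis~\ref{general-setting-discrete-scheme:hyp-two} from $H^2$ elliptic regularity of the dual reaction--diffusion system on the convex domain. The only difference is presentational: where the paper invokes regularity for the coupled system $A\phi + \Gamma\phi = R$ directly (citing the symmetric positive-semidefinite structure of $\Gamma$ and standard results), you make the argument explicit by moving the zeroth-order transfer coupling to the right-hand side and applying scalar convex-domain regularity componentwise --- a correct and slightly more self-contained justification of the same bound $\nrm{\phi}{H^2} \ls \nrm{r}{c}$.
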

\begin{proof}
Choose $s_a = k_a$ and $s_d = k_{a-1}$.  Then the conditions of 
Hypothesis~\ref{general-setting-discrete-scheme:hyp-one} 
follow, as in \citep{ern-meunier-2009}, from choosing  
$W_a = \left[H_0^1 \cap H^{k_{a+1}}(\mathcal{T}_h)\right]^3$ and
$W_d = \left[H_0^1 \cap H^{k_a}(\mathcal{T}_h)\right]^J$ where 
$H^{k}(\mathcal{T}_h)$ denotes the broken Sobolev
space of order $k$ on the mesh $\mathcal{T}_h$.  The estimate
\eqref{eqn:general-setting:discrete-scheme:approx-hyp-a} follows,
without extension, directly from classical results in approximation
theory \citep{guermond-book-2004}; precisely as discussed in
\citep{ern-meunier-2009}.  Similarly, the estimate
\eqref{eqn:general-setting:discrete-scheme:approx-hyp-b} follows from
the properties of $d$, standard interpolation estimates
\citep{guermond-book-2004}, and the product structure of $V_d$ and 
$V_{d,h}$.  

For hypothesis~\ref{general-setting-discrete-scheme:hyp-two}, we use
the elliptic regularity, c.f.~standard well posedness and interior
regularity arguments in \cite[Chp. 6]{evans-book-2010}, of the
solution to the coupled linear diffusion-reaction equation of finding
$\phi \in V_d$ such that
\begin{equation*}
  A\phi + \Gamma \phi = R\,\quad \text{for }R\in L_d,
\end{equation*}
where $A$ is the $J \times J$ diagonal Laplacian matrix
\begin{equation}
\left( 
\begin{array}{cccc}
-\Delta & 0 	  & \dots & 0 \\
0 	& -\Delta & \dots & 0 \\
\vdots	&  \ddots & \ddots& \vdots \\
0	& 0	  & 0	  & -\Delta
\end{array}
\right),
\end{equation}
and $\Gamma$ is a matrix composed of the transfer coefficients
$\gamma_{ij}$: $\Gamma_{ii} = \sum_{j} \gamma_{ji}$, and $\Gamma_{ij}
= - \gamma_{ij}$ for $j \not = i$. %
It follows from Lemma~\ref{lemma:d}, and $\Gamma$ symmetric and positive-semi 
definite, that %
the solution $\phi \in V_d$ to the dual problem
\begin{equation}
  d(w,\phi) = c(R,w) = %
  \dual{R}{w}{L_d',V_d},\quad\text{for all } w\in V_d,
\end{equation} 
lies in $\left[H^2\right]^{J}$ with $\nrm{\phi}{H^2} \ls \nrm{R}{L_d}
= \nrm{R}{c}$. Using this and standard interpolation results we have
$\phi_{h} \in V_{d,h}$ with
\begin{equation*}
	\nrm{\phi-\phi_{h}}{d} \ls h^{\delta}\nrm{R}{c} = h^{\delta}\nrm{R}{L_d},
\end{equation*}
where $\delta = 1$; exactly as in \citep{ern-meunier-2009}.  Finally, with 
$\delta = 1$ and the choices $s_a = k_a$ and $s_d = k_{a}-1$, 
hypothesis~\ref{general-setting-discrete-scheme:hyp-three} also holds.  
\end{proof}

A priori estimates for the Euler-Galerkin
discretization~\eqref{eq:discrete} of the generalized poroelasticity
equations~\eqref{eq:mpet} then follow directly from \cite[Theorem
  3.1]{ern-meunier-2009}. These estimates will be used in the
a posteriori analysis and are restated from \citep{ern-meunier-2009},
subject to the extended spaces and forms of \eqref{eq:spaces} and
\eqref{eq:forms}.
\begin{cor}[A priori estimates for generalized poroelasticity]
Let 
$I_n = [t_{n-1},t_n]$ denote the $n^{th}$ time sub-interval of $[0,T]$ 
for $n=1,2,\dots N$ of length
$\tau_n = t_n - t_{n-1}$. Suppose the exact solution $(u,p)$ to 
\eqref{eq:weak} satisfies 
$u\in C^1(0,T;W_a) \cap C^2(0,T;V_a)$ and 
$p\in C^1(0,T;W_d)\cap C^2(0,T;L_d)$, where $W_a$ and $W_d$ are 
given above with $V_a$ and $L_d$ as in 
\eqref{eq:spaces}.  It is also assumed that
the initial data satisfies 
\begin{equation*}
\nrm{u_0 - u_{0,h}}{a} \ls h^{k_a}\nrm{u_0}{W_a} \quad \text{and} \quad 
\nrm{p_{0} - p_{0,h}}{c} \ls h^{k_a}\nrm{p_{0}}{W_d} .
\end{equation*}
Define 
\begin{align*}
C_1^n(u,p) &= \nrm{\partial_t p(s)}{L^{\infty}(I_n;W_d)}^2 + %
	\nrm{\partial_t u(s)}{L^{\infty}(I_n;W_a)}^2,\\
C_2^n(u,p) &= \nrm{\partial_{tt}^2 p(s)}{L^{\infty}(I_n;L_d)}^2 + %
	\nrm{\partial_{tt}^2 u(s)}{L^{\infty}(I_n;V_a)}^2,\\
C^n(f,g)   &= \nrm{f^n - f_h^n}{a}^2 + \tau_n\nrm{g^n - g_h^n}{d}^2,\\
C(u_0,p_0) &= \nrm{u_0}{W_a}^2 + \nrm{p_0}{W_d}^2.
\end{align*}
Setting, for simplicity, $s=k_a$ then $s=k_d+1$, by the selection of the 
discrete spaces, and we have that for each $n\in\left\{1,2,\dots,N\right\}$
\begin{equation}
\begin{split}
  \nrm{u^n - u_h^n}{a}^2 + \nrm{p^n - p_{h}^n}{c}^2  
  \ls h^{2s}C(u_0,p_0) + \suml{m=1}{n}C^{m}(f,g)  +  
  \suml{m=1}{n}\tau_m h^{2s}C_1^m(u,p) \\ 
  \quad\quad + \suml{m=1}{n}\tau_m^3 C_2^m(u,p) + h^{2s}\left(\nrm{u^n}{W_a}^2 + 
  \nrm{p^n}{W_d}^2\right),
\end{split}
\end{equation}
and
\begin{equation}
\begin{split}
\suml{m=1}{n}\tau_m \nrm{p^m-p_h^m}{d}^2 \ls h^{2s}C(u_0,p_0) 
  + \suml{m=1}{n}C^{m}(f,g) + \suml{m=1}{n}\tau_m h^{2s}C_1^{m}(u,p) \\ 
  \quad\quad+ \suml{m=1}{n}\tau_m^3 C_2^m(u,p) 
  + \suml{m=1}{n}\tau_m h^{2s-1}\nrm{p^m}{W_d}^2 .
\end{split}
\end{equation}
\end{cor}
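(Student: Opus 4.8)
The plan is to obtain both estimates as a direct specialization of the abstract a-priori bound \cite[Theorem 3.1]{ern-meunier-2009} for Euler--Galerkin discretizations of coupled elliptic-parabolic problems. All the mathematical content needed to invoke that theorem has already been assembled in the preceding results: Lemma~\ref{lemma:mpet} shows that the MPET weak formulation \eqref{eq:weak} fits the abstract framework of Section~\ref{sec:general-setting:basic}, verifying conditions (1)--(5) for the forms \eqref{eq:forms}, while Lemma~\ref{lemma:discrete} shows that the Lagrange discretization \eqref{eqn:disc-spaces:vah}--\eqref{eqn:disc-spaces:vdh} satisfies Hypotheses~\ref{general-setting-discrete-scheme:hyp-one}--\ref{general-setting-discrete-scheme:hyp-three} with the concrete exponents $s_a = k_a$, $s_d = k_a - 1$ and $\delta = 1$. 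Consequently the abstract theorem applies verbatim, and the proof reduces to transcribing its conclusion in terms of the concrete norms $\nrm{\cdot}{a}$, $\nrm{\cdot}{c}$ and $\nrm{\cdot}{d}$ induced by \eqref{eq:forms}.

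First I would set $s = s_a = k_a$, so that by Hypothesis~\ref{general-setting-discrete-scheme:hyp-three}, $s = s_d + \delta = k_d + 1 = k_a$ as indicated, this being the single rate governing both the energy term $\nrm{u^n - u_h^n}{a}^2 + \nrm{p^n - p_h^n}{c}^2$ and the dissipation term $\suml{m=1}{n}\tau_m\nrm{p^m - p_h^m}{d}^2$. I would then read off from \cite[Theorem 3.1]{ern-meunier-2009} the two corresponding abstract estimates and match each right-hand-side contribution to its concrete counterpart: the best-approximation residuals of the time derivatives, measured in the $W_a$ and $W_d$ norms, collapse into $C_1^n(u,p)$; the second-time-derivative terms into $C_2^n(u,p)$; the data-approximation discrepancies $f^n - f_h^n$ and $g^n - g_h^n$ into $C^n(f,g)$; and the initial-data terms into $C(u_0,p_0)$, the latter controlled precisely by the assumed bounds on $\nrm{u_0 - u_{0,h}}{a}$ and $\nrm{p_0 - p_{0,h}}{c}$.

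The main subtlety — a bookkeeping matter rather than a genuine obstacle — is the reconciliation of the hypotheses' norms with the regularity spaces stated in the corollary, together with the correct identification of the two distinct $h$-powers. I would check that the regularity assumptions $u \in C^1(0,T;W_a)\cap C^2(0,T;V_a)$ and $p \in C^1(0,T;W_d)\cap C^2(0,T;L_d)$ are exactly those under which the abstract theorem renders the $L^\infty(I_n;\cdot)$ quantities in $C_1^n$ and $C_2^n$ finite, and that the powers of $h$ — uniformly $h^{2s}$ in the energy estimate, but $h^{2s-1}$ on the final $\nrm{p^m}{W_d}^2$ term of the dissipation estimate — reflect the distinct exponents $s_a = s$ and $s_d = s - 1$ entering through Hypotheses~\ref{general-setting-discrete-scheme:hyp-one} and~\ref{general-setting-discrete-scheme:hyp-two}. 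Finally, I would note, as in the energy estimate of Corollary~\ref{cor:energy-est}, that the product structure of $V_d$ and the semi-inner-product contribution $|\cdot|_T$ established in Lemma~\ref{lemma:d} are already absorbed into the abstract $d$-norm $\nrm{\cdot}{d}$, so the transfer terms require no separate treatment here and the proportionality constants retain their stated dependence on the discretization degrees and the geometry.
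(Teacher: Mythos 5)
Your proposal is correct and follows the same route as the paper: the paper likewise treats this corollary as a direct restatement of \cite[Theorem 3.1]{ern-meunier-2009}, justified by Lemma~\ref{lemma:mpet} and Lemma~\ref{lemma:discrete} having verified the abstract conditions and Hypotheses~\ref{general-setting-discrete-scheme:hyp-one}--\ref{general-setting-discrete-scheme:hyp-three} with $s_a = k_a$, $s_d = k_a - 1$, $\delta = 1$. Your additional bookkeeping on the norms, regularity spaces, and the distinct $h$-powers is consistent with, and somewhat more explicit than, what the paper records.
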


\section{A posteriori error estimation for generalized poroelasticity}
\label{sec:aposteriori}
We now turn to discuss the implications to a posteriori error
estimates for generalized poroelasticity as viewed through the lens of
the coupled elliptic-parabolic problem framework. Our focus is to
derive, apply and evaluate residual-based error estimators and
indicators in the context of generalized poroelasticity. We will
therefore present an explicit account of abstractly defined quantities
presented in \cite[Sec.~4.1]{ern-meunier-2009}, including e.g.~the
Galerkin residuals, applied in our context.

\subsection{Time interpolation}
\label{sec:aposteriori:time-interp}

We \changed{now recall} additional notation for time interpolation 
\changed{(from \cite[Sec. 4.1]{ern-meunier-2009})}, and rewrite
\eqref{eq:discrete}. Let $u_{h\tau}$
denote the continuous and piecewise linear function in time,
$u_{h\tau}\in H^1(0,T;V_{a,h})$ such that $u_{h\tau}(t_n) = u_{h}^n$.
Similarly $(p_{1h\tau},p_{2,h\tau},\dots, p_{J,h\tau}) = p_{h\tau}$ is
defined by $p_{h\tau}(t_n) = p_h^n$ and extended linearly in time. As
a result, $\partial_t u_{h\tau}$, $\partial_t p_{h\tau}$ are defined
for almost every $t\in(0,T)$. Define the corresponding
continuous, piecewise linear in time variants of the data, $f_{h\tau}$
and $g_{h\tau}$, by the same approach; i.e.~$f_{h\tau}(t_n) = f_h^n$
and $g_{h\tau}(t_n) = g_{h}^n$.

Before rephrasing \eqref{eq:discrete} using the time-interpolated
variables we define piecewise constant functions in time for the
pressure and right-hand side data. These are defined as
$\pi^0p_{h\tau} = p_h^n$ and $\pi^0 g_{h\tau} = g_{h}^n$ on $I_n =
(t_{n-1}, t_{n})$.  Using the above notation, the discrete scheme for
almost every $t\in (0,T)$ becomes
\begin{subequations}
  \label{eq:discrete-time-interpolated}
  \begin{align}
    a(u_{h\tau},v_h) - b(v_h,p_{h\tau}) &= %
    \inner{f_{h\tau}}{v_h},%
    \quad\quad \forall v\in V_{a,h},
    \label{eq:discrete-time-interpolated:a}\\
    c(\partial_{t}p_{h\tau},q_{h}) %
    + b(\partial_t u_{h\tau},q_{h})%
    + d(\pi^0p_{h\tau},q_{h}) &= %
    \inner{\pi^0g_{h\tau}}{q_{h}},%
	  \quad\forall q_{h} \in \changed{V_{d,h}}.
    \label{eq:discrete-time-interpolated:b}
  \end{align}
\end{subequations}

\begin{rem}
Using the linear time interpolations defined above, such as $u_{h\tau}$ or 
$p_{h\tau}$, we have the following identity
\[
\partial_t u_{h\tau} = \delta u_h^n \quad \text{for all } t\in I_n = (t_{n-1},t_{n}),
\]
so that the left-hand sides of and \eqref{eq:discrete:b} and 
\eqref{eq:discrete-time-interpolated:b} are identical.  However, as noted 
in \citep{ern-meunier-2009}, the interpolants of the data, $f_{h\tau}$ and 
$g_{h\tau}$, are continuous and facilitate the definition of the continuous-time 
residuals \eqref{eq:residual:a} and \eqref{eq:residual:d}.
\end{rem}

\subsection{Galerkin residuals}
\label{sec:aposteriori:gal-residuals}

The Galerkin residuals \cite[Section 4.1]{ern-meunier-2009} are functions of 
time whose co-domain lies in the dual of either $V_a$ or $V_d$.  More 
specifically, the residuals are continuous, 
piecewise-affine functions $\mathcal{G}_a: [0,T]\rightarrow V_a^*$ and 
$\mathcal{G}_d: [0,T]\rightarrow V_d^*$.  In our context, of generalized 
poroelasticity, the Galerkin residual $\mathcal{G}_a$ is, given any $v\in V_a$, 
defined by the relation
\begin{equation}
  \label{eq:residual:a}
  \inner{\mathcal{G}_a}{v}
  \equiv \inner{f_{h\tau}}{v} - a(u_{h\tau},v) + b(v,p_{h\tau})
  = \inner{f_{h\tau}}{v} - a(u_{h\tau}, v) + \suml{j=1}{J} \inner{p_{j,h\tau}}{\Div v}_{\alpha_j}.
\end{equation}
Similarly, $\mathcal{G}_d$ is, 
given any $q=(q_1, q_2, \dots, q_{J}) \in V_d$, defined by the relation
\begin{equation}
  \begin{split}
  \label{eq:residual:d}
  \inner{\mathcal{G}_d}{q} 
  &\equiv\inner{\pi^0 g}{q} - c(\partial_t p_{h\tau},q) - 
  b(\partial_t u_{h\tau},q) - d(\pi^0 p_{h\tau},q) \\ 
  &=
  \suml{j=1}{J}\inner{\pi^0 g_{j,h\tau}}{q_j} 
  - \suml{j=1}{J}\inner{s_j \partial_t p_{j,h\tau}}{q_j} 
  - \suml{j=1}{J}\inner{\alpha_j \partial_t\Div u_{h\tau}}{q_j} \\
  &\qquad - \suml{j=1}{J}\inner{\kappa_j \Grad p_j}{\Grad q_j} + 
  \frac12 \suml{j=1}{J}\suml{i=1}{J} %
  \inner{\gamma_{ij}(p_{j,h\tau}-p_{i,h\tau})}{(q_j - q_i)},
  \end{split}
\end{equation}
Again, we note that \eqref{eq:residual:a} and \eqref{eq:residual:d}
generalize the corresponding \cite[Sec.~4.1]{ern-meunier-2009}
residuals for the case of single-network poroelasticity studied therein.

\subsection{Data, space and time estimators}
\label{sec:aposteriori:estimators}
The general coupled elliptic-parabolic problem framework gives a
posteriori error estimates, and in particular so-called data, space
and time estimators for the discrete solutions. In our context of
generalized poroelasticity, these can be expressed explicitly as
follows. We have terms for the data $f$ and $g$ given by
\begin{equation*}
\mathcal{E}(f,g) = \nrm{g - \pi^0 g_{h\tau}}{L^2(0,T;V_a^{\ast})}^2 + %
\left(\nrm{f-f_{h\tau}}{L^{\infty}(0,T;V_a^{\ast})} + %
\nrm{\partial_t(f-f_{h\tau})}{L^{1}(0,T;V_a^{\ast})}\right)^2,
\end{equation*}
and the framework data, space and time estimators are defined, respectively, as 
\begin{align}
\mathcal{E}_{\text{data}} &= \nrm{u_0 - u_{0h}}{a}^2 + \nrm{p_0 - p_{0h}}{c}^2%
	 + \mathcal{E}(f,g),\label{sec:aposteriori:estimators:dat}\\
\mathcal{E}_{\text{space}} &= \nrm{\mathcal{G}_d}{L^2(0,T;V_d^{\ast})}^2 + %
	\left(\nrm{\mathcal{G}_a}{L^{\infty}(0,T;V_a^{\ast})} + %
	\nrm{\partial_t \mathcal{G}_a}{L^1(0,T;V_a^{\ast})}\right)^2,%
	\label{sec:aposteriori:estimators:spc} \\
	\changed{\mathcal{E}_{\text{time}}} &= \changed{\nrm{p_{h\tau}-\pi^0 p_{h\tau}}{L^2(0,T;V_d)}^2}
	\label{sec:aposteriori:estimators:tim},
\end{align}
where we recall that the norms are now defined according to the
extended generalized poroelasticity spaces \eqref{eq:spaces} and
forms~\eqref{eq:forms}.  %
%
The following a posteriori error estimate for the general MPET equations holds:
\begin{prop}
  \label{eqn:err-upper-bound}
  For every time $t_n$, with $n\in\left\{1,2,\dots,N\right\}$, the following 
  inequality holds 
  \begin{align}
    &\nrm{u - u_{h\tau}}{L^{\infty}(0,t_n;V_a)}^2 + %
    \nrm{p-p_{h\tau}}{L^{\infty}(0,t_n;L_d)}^2 +%
    \nrm{p-p_{h\tau}}{L^2(0,t_n;V_d)}^2 \\ &\qquad + %
    \nrm{p-\pi^0 p_{h\tau}}{L^2(0,t_n;V_d)}^2 \ls %
    \mathcal{E}_{\text{data}} + \mathcal{E}_{\text{space}} %
    + \mathcal{E}_{\text{time}}
  \end{align}
\end{prop}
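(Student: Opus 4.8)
The plan is to obtain this inequality as the specialization of the abstract a-posteriori error estimate for coupled elliptic-parabolic problems of \citep{ern-meunier-2009} to the MPET setting. Lemma~\ref{lemma:mpet} already verifies that the weak formulation \eqref{eq:weak}, posed on the spaces \eqref{eq:spaces} with the forms \eqref{eq:forms}, meets all structural hypotheses of that framework: $a$ and $c$ are symmetric and coercive, $d$ is a genuine inner product by Lemma~\ref{lemma:d} (the transfer contribution entering only through the semi-norm $|\cdot|_T$, with coercivity supplied by $\kappa_{\min}>0$), and $b$ is continuous on $V_a\times L_d$. The first, essentially bookkeeping, step is therefore to check that the explicit Galerkin residuals \eqref{eq:residual:a}--\eqref{eq:residual:d} and the estimators \eqref{sec:aposteriori:estimators:dat}--\eqref{sec:aposteriori:estimators:tim} coincide term by term with the abstractly defined quantities of \cite[Sec.~4.1]{ern-meunier-2009} under these choices. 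Once that identification is made, the stated bound is precisely the abstract conclusion.

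For completeness I would reproduce the underlying energy argument. Subtracting the time-interpolated discrete scheme \eqref{eq:discrete-time-interpolated} from the continuous weak form \eqref{eq:weak}, and using the residual definitions, yields error equations for $e_u = u - u_{h\tau}$ and $e_p = p - p_{h\tau}$ in which the data errors $f - f_{h\tau}$, $g - \pi^0 g_{h\tau}$, the residuals $\mathcal{G}_a,\mathcal{G}_d$, and the consistency term $d(p_{h\tau}-\pi^0 p_{h\tau},\cdot)$ appear on the right. Testing the elliptic error equation with $\partial_t e_u$ and the parabolic one with $e_p$ and adding, the coupling contributions $b(\partial_t e_u, e_p)$ cancel by construction, while the symmetry of $a$ and $c$ turns $a(e_u,\partial_t e_u)$ and $c(\partial_t e_p, e_p)$ into exact time derivatives, leaving the energy identity
\[
\frac12 \frac{d}{dt}\left( \|e_u\|_a^2 + \|e_p\|_c^2 \right) + \|e_p\|_d^2 = \langle \mathcal{G}_a, \partial_t e_u\rangle + \langle \mathcal{G}_d, e_p\rangle + (\text{data and time-consistency terms}),
\]
where the coercivity of $d$ supplies the dissipative $\|e_p\|_d^2$ on the left.

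Integrating over $[0,t_n]$ produces the $L^\infty$-in-time control of $\|e_u\|_a$ and $\|e_p\|_c\simeq\|e_p\|_{L_d}$ together with the $L^2$-in-time control of $\|e_p\|_d\simeq\|e_p\|_{V_d}$, i.e.\ the first three left-hand quantities, with the initial errors producing the first two terms of $\mathcal{E}_{\text{data}}$. Bounding the right-hand side is where the several estimator components emerge: the term $\int_0^{t_n}\langle \mathcal{G}_a, \partial_t e_u\rangle$ is treated by integration by parts in time, $[\langle \mathcal{G}_a, e_u\rangle]_0^{t_n} - \int_0^{t_n}\langle \partial_t \mathcal{G}_a, e_u\rangle$, which is exactly why $\mathcal{E}_{\text{space}}$ pairs $\|\mathcal{G}_a\|_{L^\infty(V_a^*)}$ with $\|\partial_t \mathcal{G}_a\|_{L^1(V_a^*)}$ (and likewise the $f-f_{h\tau}$ contribution in $\mathcal{E}(f,g)$); the term $\int_0^{t_n}\langle \mathcal{G}_d, e_p\rangle$ is handled by Cauchy--Schwarz and Young against $\|e_p\|_{L^2(V_d)}$, the latter absorbed into the left, yielding $\|\mathcal{G}_d\|_{L^2(V_d^*)}^2$; and the consistency term $d(p_{h\tau}-\pi^0 p_{h\tau}, e_p)$ is bounded the same way, its $L^2$-in-time factor being, by a direct computation on each $I_n$, $\int_{I_n}\|p_{h\tau}-\pi^0 p_{h\tau}\|_d^2 = \tfrac{\tau_n}{3}\|p_h^n - p_h^{n-1}\|_d^2$, i.e.\ exactly $\mathcal{E}_{\text{time}}$. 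The last left-hand quantity follows from the triangle inequality $\|p-\pi^0 p_{h\tau}\|_{V_d}\le \|e_p\|_{V_d} + \|p_{h\tau}-\pi^0 p_{h\tau}\|_{V_d}$, controlled by the already-estimated $\|e_p\|_{L^2(V_d)}$ and again by $\mathcal{E}_{\text{time}}$.

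The main obstacle I anticipate is not any single estimate but the time-integration bookkeeping: selecting the test functions so the $b$-coupling cancels, and organizing the integration by parts of the $\mathcal{G}_a$ (and $f-f_{h\tau}$) terms so the boundary and $L^1$-in-time pieces align with the asymmetric $L^\infty/L^1$ combinations appearing in $\mathcal{E}_{\text{space}}$ and $\mathcal{E}(f,g)$, followed by a Young-inequality absorption (or a short Gronwall step) that keeps the hidden constant independent of the material parameters and of $J$. Since all of this is carried out abstractly in \citep{ern-meunier-2009} and the hypotheses are verified in Lemma~\ref{lemma:mpet}, the cleanest route is to invoke the abstract estimate and merely record the explicit residuals and estimators, which is what the displayed quantities accomplish.
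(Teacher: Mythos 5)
Your proposal is correct and follows essentially the same route as the paper: the paper's proof is precisely the one-line invocation of \cite[Thm.~4.1]{ern-meunier-2009} together with the verification (Lemma~\ref{lemma:mpet}) that the MPET spaces, forms, residuals and estimators instantiate the abstract framework. Your additional sketch of the underlying energy argument (cancellation of the $b$-coupling, integration by parts in time for the $\mathcal{G}_a$ term, and the $\tfrac{\tau_n}{3}\|p_h^n-p_h^{n-1}\|_d^2$ identity behind $\mathcal{E}_{\text{time}}$) is consistent with the cited result but goes beyond what the paper records.
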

\begin{proof}
The proof follows from \cite[Thm.~4.1]{ern-meunier-2009} and the arguments of 
Section \ref{sec:general-setting:extension}. 
\end{proof}

\begin{rem}
	\changed{
	Note \cite[eqn.~(4.10)]{ern-meunier-2009} that \eqref{sec:aposteriori:estimators:tim} is equivalent to %
	\[
		\mathcal{E}_{\text{time}} = \frac13 \suml{m=1}{N}\tau_{m}\nrm{p_{h}^m - p_{h}^{m-1}}{d}^2,\quad m\in\left\{1,2,\dots,N\right\}.
	\]%
	The above expression will be used in Section \ref{sec:adaptivity:indicators} and follows, via the definition of $p_{h\tau}$ and $\pi^0$, from the calculation
	\begin{align*}
		\nrm{p_{h\tau}-\pi^0 p_{h\tau}}{L^2(0,T;V_d)}^2 &= \sum\limits_{m=1}^N \int_{t_{m-1}}^{t_m} \left(\frac{\xi-t_{m-1}}{\tau_m}\right)^2\nrm{p_h^m - p_h^{m-1}}{d}^2\,d\xi\\%
		&= \sum\limits_{m=1}^N \tau_m^{-2} \nrm{p_h^m - p_h^{m-1}}{d}^2 \int_{t_{m-1}}^{t_m}  \left(\xi-t_{m-1}\right)^2\,d\xi %
	\end{align*}%
	}
\end{rem}

\subsection{Element and edge residuals}
\label{sec:aposteriori:residuals}
In this section we state the definition of the element and edge
residuals (c.f.~\cite[Sec.~4.1]{ern-meunier-2009}) adapted to
generalized poroelasticity. We then define from these residuals a set
of a posteriori error indicators. These indicators can be used to
bound the Galerkin residuals defined in
Section~\ref{sec:aposteriori:gal-residuals}. The a posteriori error
indicators defined in this section will be used to carry out adaptive
refinement for the numerical studies in
Section~\ref{sec:numerical-experiments}.

\subsubsection{Element and edge residuals for the momentum equation}
\label{sec:aposteriori:residuals:momentum}
The residuals associated with the displacement are derived from the
Galerkin residual~\eqref{eq:residual:a}. We give them explicitly here
for the sake of clarity and to facilitate implementation. For $v \in
V_a$ and at time $t_n$, with $n \in \{1, 2, \dots, N \}$, we have
\begin{align*}
  \inner{\mathcal{G}^m_a}{v}  %
  &= \suml{K \in \mathcal{T}_h}{} \left( \dual{f^m_{h\tau}}{v}{K} - %
  \dual{\sigma(u^m_{h\tau})}{\epsilon(v)}{K} %
  + \suml{j=1}{J}\dual{\alpha_j p^m_{j,h\tau}}{\Div v}{K}\right), 
\end{align*}
where the notation $\dual{f}{g}{K} = \int_{K} f\,g\,dx$ denotes local integration
over a simplex $K \in \mathcal{T}_h$ and we have used that $u^n_{h \tau} = u^n_{h}$ and 
$p^n_{j, h \tau} = p^n_{j, h}$ for every $n \in \{1, 2, \dots, n \}$.  
Integrating the above by parts over each $K\in \mathcal{T}_h$ gives
\begin{equation}\label{sec:apriori:momentum-residual}
  \inner{\mathcal{G}^n_a}{v} = \suml{K\in\mathcal{T}_h}{}\dual{R^n_{uh,K}}{v}{K} %
  + \suml{e\in \Gamma_{int}}{}\dual{J^n_{uh,e}}{v}{e}, 
\end{equation}
where $\Gamma_{int}$ denotes the set of interior edges, $\inner{f}{g}_e$ denotes 
integration over the edge $e$ and where
\begin{equation}
  \label{eqn:momtm-residual-elem}
  R^n_{uh,K} = f^n_{h|K} + \Div \sigma(u^n_{h})_{K} - \suml{j=1}{J} \alpha_j \Grad p^n_{j,h|K},
\end{equation}
where the additional subscript denotes the restriction $K$. To define the term $J^n_{uh}$ 
above we use the standard notation, of \eqref{eqn:jump-operator}, and define 
\begin{equation}
  \label{eqn:momtm-residual-jump-simp}
  J^n_{uh,e}  
  = - \left[\sigma(u^n_{h})\right]_e n_e, 
\end{equation}
where $e$ is an edge and $n_e$ is the fixed choice of outward facing normal to 
that edge. The corresponding time-shifted local residual and jump operators are then
\begin{equation}\label{eqn:momtm-residual-elem-timeshift}
\delta_t R_{uh,K}^n = \tau_n^{-1}(R_{uh,K}^n - R_{uh,K}^{n-1}),\quad %
\delta_t J_{uh,e}^n = \tau_n^{-1}(J_{uh,e}^n - J_{uh,e}^{n-1}).
\end{equation}
To close, we note that the conditions in \citep{ern-meunier-2009} on the 
jump operator, $J_{uh,e}$ above, are general and other choices satisfying 
the abstract requirements can be used if desired.

\subsubsection{Element and edge residuals for the mass conservation equation}
\label{sec:aposteriori:residuals:mass}
The residuals associated with the network pressures are derived from
the Galerkin residual $\mathcal{G}_d$~\eqref{eq:residual:d}.
Integrating the diffusion terms by parts, over $T\in\mathcal{T}_h$,
gives
\begin{equation}\label{sec:apriori:mass-residual}
	\inner{\mathcal{G}_d}{q} = %
	\suml{K\in\mathcal{T}_h}{}\dual{R_{ph,K}^n}{q}{K} + %
	\suml{e\in\Gamma_{int}}{}\dual{J_{ph,e}^n}{q}{e}.
\end{equation}
In the context of the extended multiple-network poroelasticity framework the 
strong form of the mass conservation residual, $R_{ph,K}^n \in L_d$ of 
\eqref{sec:apriori:mass-residual}, has a $j^{th}$ component, for 
$j\in\left\{1,2,\dots,J\right\}$, with 
\begin{align}\label{eqn:mass-residual-elem}
  \left\{R^n_{ph,K}\right\}_j &= g^n_{j,h|K} - s_j\delta_t p^n_{j,h|K} 
  - \alpha_j (\Div \delta_t u^n_{h})_K 
  + (\Div \kappa_j \Grad p^n_{j,h})_K - T_{j,h|K}, 
\end{align}
recalling that $T_j$ is given by \eqref{eq:def:T_j}, and $T_{j, h}$ is
its discrete analogue. In \eqref{eqn:mass-residual-elem}, we have also
used that $\partial_t p_{j,h\tau}^n = \delta_t p_{j,h}^n =
\tau_{m}^{-1}(p_{j,h}^n - p_{j,h}^{n-1})$, $\partial_t u_{h,\tau}^n =
\delta_t u_{h}^n$, $p^n_{j,h\tau} = p_{j,h}^n$ and $u^n_{j,h\tau} =
u^n_{j,h}$. The corresponding jump term $J_{ph,e}^n$ for
$e\in\Gamma_{int}$ has $j^{th}$ component
\begin{equation}\label{eqn:mass-residual-jump}
  \left\{J_{ph,e}^n\right\}_j = -\left[\kappa_j \Grad p_{j,h}^n\right]_e\cdot n_e,
\end{equation}
and we once more remark that other jump operators satisfying the abstract 
conditions in \citep{ern-meunier-2009} can also be considered. We also have the 
analogous time-shifted versions of the above, $\delta_t R^n_{ph,K}$ and 
$\delta_t J^n_{ph,K}$, just as in %
\eqref{eqn:momtm-residual-elem-timeshift}.  

\subsection{Error indicators in space and time} 
We now define the element-wise error indicators; these indicators will
inform the construction of the a posteriori error indicators of
Section \ref{sec:adaptivity:indicators}. In turn, these indicators
will form the foundation of the adaptive refinement strategy of
section \ref{sec:adaptivity}. Specifically, we define element-wise
indicators denoted $\eta_{u,K}^n$ and $\eta_{p,K}^n$ such that the
following equalities hold for all $v \in V_a$ and $q\in V_d$
\begin{equation}
  \label{eqn:indicator-galerkin-equivalence}
  \inner{\mathcal{G}_a^n}{v} = \suml{K\in\mathcal{T}_h}{} \inner{\eta_{u,K}^n}{v},
  \quad \inner{\mathcal{G}_d^n}{q} = \suml{K\in\mathcal{T}_h}{} \inner{\eta_{p,K}^n}{q}.
\end{equation}

First, we define the following local error indicator associated with the
momentum equation:
\begin{align}
\eta_{u,K}^n &= h_K^2\nrm{R_{uh,K}}{K}^2 + h_K%
	\suml{e\in\partial K}{}\nrm{J_{uh,e}^n}{e}%
	\label{eqn:space-err-indic-momtm}\\
	&= h_K^2\nrm{f_h^n + \Div \sigma(u_h^n) %
	- \suml{j=1}{J}\alpha_j \Grad p_{j,h}^n}{K}^2
	+ h_K\suml{e\in\partial K}{}
	\nrm{\left[\sigma(u_h^n) \right]_e n_e}{e}^2, \nonumber 
\end{align}
where the norms $\nrm{\cdot}{K}$ and $\nrm{\cdot}{e}$ represent the
usual $L^2$, or or d-dimensional $L^2$, norm over a simplex, $K$, and
edge, $e$, respectively.  Likewise, the local error indicators
associated with the mass conservation equations are
\begin{equation}
\begin{split}
  \eta_{p,T}^n &= h_K^2\nrm{R_{ph,K}^n}{K}^2 + %
  h_K\suml{e\in\partial K}{}\nrm{J_{ph,e}^n}{e}%
  \label{eqn:space-err-indic-mass}\\
  &= h_K^2\suml{j=1}{J}\nrm{g_{j,h}^n - s_j\delta_t p_{j,h}^n - %
    \alpha_j \Div\delta_t u_{h}^n + k_j \Delta p_{j,h}^n - %
    \frac12 \suml{i=1}{J}\gamma_{ij}(p^n_{j,h}-p^n_{i,h})}{K}^2 \\
  & \quad\qquad + h_K\suml{e\in\partial K}{}%
  \nrm{\suml{j=1}{J}\left[k_j\Grad p_{j,h}^n\right]_e \cdot n_e}{e}
\end{split}
\end{equation}
Similar to \eqref{eqn:momtm-residual-elem-timeshift} we will use the time-shifted
version of the local spatial error indicator for the momentum equation.  This 
expression is given by
\begin{align*}
	\eta_{u,K}^n(\delta_t) &= h_K^2\nrm{\delta_tR_{uh,K}}{K}^2 %
	+ h_K\suml{e\in\partial K}{}\nrm{\delta_t J_{uh,e}}{e}^2,
\end{align*} 
where the right-hand is analogous to that of
\eqref{eqn:space-err-indic-momtm} by taking the time-shift of the
expressions appearing inside the norm. With the local indicators in
hand we immediately have the global indicators and their time-shifted
version given by
\begin{equation}
\eta_u^n = \suml{K\in\mathcal{T}_h}{} \eta^n_{u,K}, \quad %
	\quad \eta_p^n = \suml{K\in\mathcal{T}_h}{}\eta_{p,K}^n,\quad%
	\eta_u^n(\delta_t) = \suml{K\in\mathcal{T}_h}{} \eta^n_{u,K}(\delta_t)
	\label{eqn:global-err-indic}\\
\end{equation}
In Section \ref{sec:adaptivity} we will use the above expressions to define the 
a posteriori error indicators informing a simple adaptive refinement strategy 
for the numerical simulations of Section \ref{sec:numerical-experiments}.

\subsection{A posteriori error estimators}
\label{sec:adaptivity:indicators}

We close this section by defining the final a posteriori error
estimators:
\begin{equation}
  \begin{split}
  \eta_1 = \left(\suml{n=1}{N} \tau_n \eta_p^n \right)^{\frac12},&\quad %
  \eta_2 = \sup\limits_{0\leq n \leq N} \left(\eta_{u}^n\right)^{\frac12},\\%
  \eta_3 = \suml{n=1}{N}\tau_n \left(\eta_u^n(\delta_t)\right)^{\frac12},&\qquad%
  \eta_4 = \left(\suml{n=1}{N}\tau_n\nrm{p_h^n - p_h^{n-1}}{d}^2.\right)^{\frac12}%
  \end{split}
  \label{eq:etas}
\end{equation}
The summed term $\nrm{p_h^n - p_h^{n-1}}{d}^2$, in $\eta_4$ above, can be 
expanded using the definition of \eqref{eq:d} as
\begin{align*}
\nrm{p_h^n - p_h^{n-1}}{d}^2 = \suml{j=1}{J}\kappa_j\nrm{\Grad %
\left(p_{j,h}^n - p_{j,h}^{n-1}\right)}{L^2}^2 + \frac12 %
\suml{j=1}{J}\suml{i=1}{J} \gamma_{ij} %
\nrm{ (p_{j,h}^n-p_{i,h}^{n}) - (p_{j,h}^{n-1} - p_{i,h}^{n-1})}{L^2}^2.
\end{align*}
Finally, a bound on the MPET discretization errors in terms of the a posteriori
error estimators follows:
\begin{prop}
  \label{prop:error}
  For each time $t_n$, $n \in \{0, 1, \dots,N \}$, the following
inequality for the discretization error holds
\begin{equation*}
  \begin{split}
    \nrm{u - u_{h\tau}}{L^{\infty}(0,t_n;V_a)} + 
    \nrm{p-p_{h\tau}}{L^{\infty}(0,t_n;L_d)} +
    \nrm{p-p_{h\tau}}{L^2(0,t_n;V_d)}  
    + \nrm{p-\pi^0 p_{h\tau}}{L^2(0,t_n;V_d)} \\
    \ls \eta_1 +  \eta_2 + \eta_3 + \eta_4 
    + \mathcal{E}_{h0}(u_0,p_0) + \mathcal{E}_{h}(f,g)
  \end{split}
\end{equation*}
where $\mathcal{E}_{h0}(u_0,p_0)$ and $\mathcal{E}_{h}(f,g)$ are
determined by the fidelity in the approximation of the initial data
and source terms, respectively, as
\begin{align*}
\mathcal{E}_{h0}(u_0,p_0) &= \nrm{u_0 - u_{h0}}{a} + \nrm{p_0 - p_{0h}}{c},\\ 
\mathcal{E}_{h}(f,g) &= \nrm{g-\pi^0 g_{h\tau}}{L^2(0,T;V_d)} + %
	\nrm{f-f_{h\tau}}{L^{\infty}(0,T;V_a)} + %
	\nrm{\partial_t(f-f_{h\tau})}{L^1(0,T;V_a)}.
\end{align*}
\end{prop}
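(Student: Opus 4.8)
The plan is to read Proposition~\ref{prop:error} as a concrete, computable refinement of the abstract bound already recorded in Proposition~\ref{eqn:err-upper-bound}. Starting from the latter, the squared sum of the four error quantities on the left is controlled by $\mathcal{E}_{\text{data}} + \mathcal{E}_{\text{space}} + \mathcal{E}_{\text{time}}$, so the task reduces to three bookkeeping steps: (i) replacing the abstract space estimator $\mathcal{E}_{\text{space}}$, which is expressed through dual norms of the Galerkin residuals $\mathcal{G}_a$ and $\mathcal{G}_d$, by the computable indicators $\eta_1,\eta_2,\eta_3$; (ii) recognising $\mathcal{E}_{\text{time}}$ as a multiple of $\eta_4^2$; and (iii) identifying the data contribution $\mathcal{E}_{\text{data}}$ with the squares of $\mathcal{E}_{h0}$ and $\mathcal{E}_h$. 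A final application of $\sqrt{a+b} \le \sqrt a + \sqrt b$ (together with Cauchy--Schwarz to pass from a sum of squares to a sum) then converts the squared bound of Proposition~\ref{eqn:err-upper-bound} into the triangle-inequality form stated here.

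The heart of the argument is step (i), the residual reliability estimate, which I would establish timestep by timestep. Because the discrete scheme~\eqref{eq:discrete-time-interpolated} was built precisely so that $\mathcal{G}_a$ and $\mathcal{G}_d$ annihilate $V_{a,h}$ and $V_{d,h}$, Galerkin orthogonality holds: for any $v\in V_a$ one has $\inner{\mathcal{G}_a^n}{v} = \inner{\mathcal{G}_a^n}{v - \mathcal{I}_h v}$ for a Cl\'ement/Scott--Zhang quasi-interpolant $\mathcal{I}_h v \in V_{a,h}$. Inserting the element/edge splitting~\eqref{sec:apriori:momentum-residual} and bounding each local contribution with the standard interpolation and trace estimates $\nrm{v - \mathcal{I}_h v}{K} \ls h_K \nrm{v}{H^1(\omega_K)}$ and $\nrm{v - \mathcal{I}_h v}{e} \ls h_e^{1/2}\nrm{v}{H^1(\omega_e)}$, together with the finite overlap of the patches $\omega_K$ afforded by mesh quasi-uniformity, yields $\nrm{\mathcal{G}_a^n}{V_a^{\ast}} \ls (\eta_u^n)^{1/2}$ with $\eta_u^n$ as in~\eqref{eqn:space-err-indic-momtm}. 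The identical argument applied componentwise to~\eqref{sec:apriori:mass-residual} gives $\nrm{\mathcal{G}_d^n}{V_d^{\ast}} \ls (\eta_p^n)^{1/2}$, and applying it to the time-differenced residual~\eqref{eqn:momtm-residual-elem-timeshift} gives $\nrm{\delta_t \mathcal{G}_a^n}{V_a^{\ast}} \ls (\eta_u^n(\delta_t))^{1/2}$.

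It then remains to assemble the time integrals in $\mathcal{E}_{\text{space}}$. Here I would use that $u_{h\tau}$, $p_{h\tau}$ and $f_{h\tau}$ are piecewise affine in time while $\pi^0 p_{h\tau}$, $\pi^0 g_{h\tau}$, $\partial_t u_{h\tau}$ and $\partial_t p_{h\tau}$ are piecewise constant: consequently $\mathcal{G}_d$ is piecewise constant and $\mathcal{G}_a$ piecewise affine on each $I_n$, so their space--time norms reduce to nodal sums. This gives $\nrm{\mathcal{G}_d}{L^2(0,T;V_d^{\ast})}^2 = \ssum_n \tau_n \nrm{\mathcal{G}_d^n}{V_d^{\ast}}^2 \ls \ssum_n \tau_n \eta_p^n = \eta_1^2$, $\nrm{\mathcal{G}_a}{L^{\infty}(0,T;V_a^{\ast})} \ls \sup_n (\eta_u^n)^{1/2} = \eta_2$, and $\nrm{\partial_t \mathcal{G}_a}{L^1(0,T;V_a^{\ast})} = \ssum_n \tau_n \nrm{\delta_t \mathcal{G}_a^n}{V_a^{\ast}} \ls \eta_3$, so that $\mathcal{E}_{\text{space}} \ls \eta_1^2 + (\eta_2 + \eta_3)^2$. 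For step (ii) the definition~\eqref{sec:aposteriori:estimators:tim} is exactly $\mathcal{E}_{\text{time}} = \tfrac13 \eta_4^2$, and for step (iii) the definition~\eqref{sec:aposteriori:estimators:dat} gives $\mathcal{E}_{\text{data}} \ls \mathcal{E}_{h0}(u_0,p_0)^2 + \mathcal{E}_h(f,g)^2$ directly. Combining and taking square roots yields the claim.

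I expect the main obstacle to be the residual reliability estimate of step (i): it is the only place where genuine analysis (as opposed to bookkeeping) enters, and it requires a suitable quasi-interpolation operator, the local trace inequality on element edges, and control of the patch overlap, all of which rely on the shape-regularity and quasi-uniformity assumptions on $\{\mathcal{T}_h\}_{h}$. A secondary point of care is the verification that the transfer (reaction) term in~\eqref{eqn:mass-residual-elem}, which couples the pressure components, is handled consistently in the componentwise dual-norm estimate so that the constants remain controlled by $\gamma_{\max}$ and $J$, in line with Lemma~\ref{lemma:d}.
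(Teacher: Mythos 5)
Your proposal is correct and follows essentially the same route as the paper: the paper's proof is a one-line appeal to the abstract framework results of Ern and Meunier (their Thm.~4.1, Prop.~4.1 and Thm.~4.2) together with the verification in Section~\ref{sec:general-setting} that MPET fits that framework, and your argument simply unpacks the content of those cited results --- Proposition~\ref{eqn:err-upper-bound} for the data/space/time splitting, the Cl\'ement-interpolation residual reliability bound for $\mathcal{E}_{\text{space}} \ls \eta_1^2 + (\eta_2+\eta_3)^2$, and the piecewise-affine/constant time structure for collapsing the Bochner norms to nodal sums. The only difference is one of presentation: you prove the framework estimates rather than citing them, and your identification of the obstacles (quasi-interpolation, trace inequalities, patch overlap, and the reaction-type transfer term) is accurate.
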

\begin{proof}
The above follows from the results of \cite[Thm.~4.1, Prop.~4.1,
  Thm~4.2]{ern-meunier-2009} applied in the context of generalized
poroelasticity in light of the results of
Section~\ref{sec:general-setting}.
\end{proof}

\begin{rem}
The framework result \cite[Prop.~4.2]{ern-meunier-2009} is stronger than the restatement given above; only the relevant left-hand side quantities for our computations have been restated.  \changed{It is also interesting to ask whether the framework of Ern and Meunier \cite{ern-meunier-2009} can be extended to yield a posteriori error estimators for higher-order time discretizations of elliptic-parabolic systems (e.g.~\eqref{eq:general}).  One might ponder, for instance, the use of the generalized $\theta$ scheme $\delta_t y_h^n = \theta f(y_h^n) + (1-\theta)f(y_h^{n-1})$ for which \eqref{eq:discrete} is $\theta=1$ and $\theta=1/2$ yields the trapezoidal time integration method.  Adapting \cite{ern-meunier-2009} to this context could be approached by generalizing Lemma 2.1 and Theorem 3.1 alongside extending the discrete scheme interpolation, Galerkin residuals, element and jump residuals, Theorem 4.1 and Proposition 4.1-4.3 of \cite[Sec.~4.1]{ern-meunier-2009}. Though higher-order time discretization schemes are of practical importance, the analytic extension of \cite{ern-meunier-2009} to this context is a topic for future work.} 
\end{rem}


\section{Numerical convergence and accuracy of error estimators}
\label{sec:numerical-experiments}

\begin{table}
  \begin{subtable}{1.0\textwidth}
  \centering
  \caption{$\nrm{u - u_{h\tau}}{L^{\infty}(0, T; (H^1_0)^d)}$}
  \begin{tabular}{c|ccccc|c}
    \toprule
    N/dt  & $\tau_0$ & $\tau_0/2$ & $\tau_0 / 4$ & $\tau_0 /8$ & $\tau_0/16$ & Rate ($h$) \\
    \midrule
4 & \num{1.82e-02} & \num{1.82e-02} & \num{1.82e-02} & \num{1.82e-02} & \num{1.82e-02} & \\
8 & \num{4.71e-03} & \num{4.64e-03} & \num{4.62e-03} & \num{4.61e-03} & \num{4.61e-03} & 1.98 \\
16 & \num{1.44e-03} & \num{1.24e-03} & \num{1.18e-03} & \num{1.16e-03} & \num{1.16e-03} & 1.99 \\
32 & \num{8.51e-04} & \num{5.29e-04} & \num{3.63e-04} & \num{3.10e-04} & \num{2.96e-04} & 1.97\\
64 & \num{7.86e-04} & \num{4.50e-04} & \num{2.40e-04} & \num{1.36e-04} & \num{9.07e-05} & 1.70 \\
    \midrule
    Rate ($\tau$) & &  0.81 & 0.90 & 0.82 & 0.59 & \textbf{1.77}\\
    \bottomrule 
  \end{tabular} 
  \label{tab:exp:1:a}
  \end{subtable}
  \begin{subtable}{1.0\textwidth}
  \centering
  \caption{$\nrm{p-p_{h\tau}}{L^{\infty}(0, T; (L^2)^J)}$}
  \begin{tabular}{c|ccccc|c}
    \toprule
    N/dt  & $\tau_0$ & $\tau_0/2$ & $\tau_0/4$ & $\tau_0/8$ & $\tau_0/16$ & Rate ($h$) \\
    \midrule
    4 & \num{8.69e-02} & \num{8.93e-02} & \num{8.66e-02} & \num{8.52e-02} & \num{8.46e-02}  & \\
    8 & \num{3.97e-02} & \num{3.29e-02} & \num{2.73e-02} & \num{2.47e-02} & \num{2.36e-02}  & 1.84 \\
    16 & \num{3.06e-02} & \num{1.97e-02} & \num{1.23e-02} & \num{8.74e-03} & \num{7.10e-03} & 1.73\\
    32 & \num{2.89e-02} & \num{1.69e-02} & \num{9.13e-03} & \num{5.14e-03} & \num{3.16e-03} & 1.14 \\
    64 & \num{2.86e-02} & \num{1.63e-02} & \num{8.46e-03} & \num{4.39e-03} & \num{2.33e-03} & 0.44 \\
    \midrule
    Rate ($\tau$) & &  0.81 & 0.95 & 0.95 & 0.91 & \textbf{1.14} \\
    \bottomrule
  \end{tabular}
  \label{tab:exp:1:b}
  \end{subtable}
  \begin{subtable}{1.0\textwidth}
  \centering
  \caption{$\nrm{p-p_{h\tau}}{L^{2}(0, T; (H^1_0)^J)}$}
  \begin{tabular}{c|ccccc|c}
    \toprule
    N/dt  & $\tau_0$ & $\tau_0/2$ & $\tau_0 / 4$ & $\tau_0 /8$ & $\tau_0/16$ & Rate ($h$) \\
    \midrule
    4 & \num{4.42e-01} & \num{5.26e-01} & \num{5.39e-01} & \num{5.41e-01} & \num{5.41e-01} &  \\
    8 & \num{2.37e-01} & \num{2.73e-01} & \num{2.77e-01} & \num{2.78e-01} & \num{2.78e-01} & 0.96 \\
    16 & \num{1.38e-01} & \num{1.45e-01} & \num{1.42e-01} & \num{1.40e-01} & \num{1.40e-01} & 0.99 \\
    32 & \num{9.81e-02} & \num{8.54e-02} & \num{7.49e-02} & \num{7.13e-02} & \num{7.03e-02} & 0.99 \\
    64 & \num{8.53e-02} & \num{6.23e-02} & \num{4.46e-02} & \num{3.77e-02} & \num{3.57e-02} & 0.98\\
    \midrule
    Rate ($\tau$) & &  0.45 & 0.48 & 0.24 & 0.08 & \textbf{1.00} \\
    \bottomrule
  \end{tabular}
  \end{subtable}
  \begin{subtable}{1.0\textwidth}
  \centering
  \caption{$\nrm{p-\pi^0 p_{h \tau}}{L^{2}(0, T; (H^1_0)^J)}$}
  \begin{tabular}{c|ccccc|c}
    \toprule
    N/dt  & $\tau_0$ & $\tau_0/2$ & $\tau_0 / 4$ & $\tau_0 /8$ & $\tau_0/16$ & Rate ($h$) \\
    \midrule
    4 & \num{9.41e-01} & \num{6.94e-01} & \num{6.03e-01} & \num{5.67e-01} & \num{5.53e-01} & \\
    8 & \num{7.89e-01} & \num{4.73e-01} & \num{3.49e-01} & \num{3.02e-01} & \num{2.87e-01} & 0.95\\
    16 & \num{7.44e-01} & \num{3.94e-01} & \num{2.40e-01} & \num{1.74e-01} & \num{1.50e-01} & 0.93 \\
    32 & \num{7.32e-01} & \num{3.71e-01} & \num{2.03e-01} & \num{1.21e-01} & \num{8.66e-02} & 0.80\\
    64 & \num{7.29e-01} & \num{3.65e-01} & \num{1.93e-01} & \num{1.04e-01} & \num{6.09e-02} & 0.51\\
    \midrule
    Rate ($\tau$) & & 1.00 & 0.92 & 0.89 & 0.77  & \textbf{0.99} \\
    \bottomrule
  \end{tabular}
  \end{subtable}
  \caption{Displacement and pressure approximation errors (in
    different norms) and their rates of convergence for the smooth
    3-network test case under uniform refinement in space
    (horizontal) and time (horizontal). $T = 0.4$, $\tau_0 =
    T/2$. Rate ($\tau$) is the rate for the finest mesh, under time
    step refinement. Rate ($h$) is the rate for the finest time step,
    under mesh refinement. The diagonal rate (in bold) is the final
    space-time (diagonal) rate.}
  \label{tab:exp:1}
\end{table}
\begin{table}[hbt!]
  \begin{subtable}{1.0\textwidth}
  \centering
  \caption{$\eta_1$}
  \begin{tabular}{c|ccccc|c}
    \toprule
    N/dt  & $\tau_0$ & $\tau_0/2$ & $\tau_0 / 4$ & $\tau_0 /8$ & $\tau_0/16$ & Rate ($h$) \\
    \midrule
4 & \num{3.30e+00} & \num{3.19e+00} & \num{3.13e+00} & \num{3.09e+00} & \num{3.08e+00} & \\
8 & \num{1.73e+00} & \num{1.67e+00} & \num{1.64e+00} & \num{1.63e+00} & \num{1.62e+00} & 0.93\\
16 & \num{8.80e-01} & \num{8.52e-01} & \num{8.37e-01} & \num{8.29e-01} & \num{8.25e-01} & 0.97\\
32 & \num{4.43e-01} & \num{4.29e-01} & \num{4.22e-01} & \num{4.18e-01} & \num{4.16e-01} & 0.99 \\
64 & \num{2.22e-01} & \num{2.15e-01} & \num{2.12e-01} & \num{2.10e-01} & \num{2.09e-01} & 1.00 \\
    \midrule
    Rate ($\tau$) & &  0.05 & 0.03 & 0.01 & 0.01 & \textbf{1.00} \\
    \bottomrule 
  \end{tabular} 
  \end{subtable}
  \begin{subtable}{1.0\textwidth}
  \centering
  \caption{$\eta_2$}
  \begin{tabular}{c|ccccc|c}
    \toprule
    N/dt  & $\tau_0$ & $\tau_0/2$ & $\tau_0 / 4$ & $\tau_0 /8$ & $\tau_0/16$ & Rate ($h$) \\
    \midrule
4 & \num{1.76e+00} & \num{1.76e+00} & \num{1.76e+00} & \num{1.76e+00} & \num{1.76e+00} & \\
8 & \num{4.51e-01} & \num{4.51e-01} & \num{4.51e-01} & \num{4.51e-01} & \num{4.51e-01} & 1.97\\
16 & \num{1.14e-01} & \num{1.14e-01} & \num{1.14e-01} & \num{1.14e-01} & \num{1.14e-01} & 1.99\\
32 & \num{2.84e-02} & \num{2.84e-02} & \num{2.84e-02} & \num{2.84e-02} & \num{2.84e-02} & 2.00\\
64 & \num{7.12e-03} & \num{7.12e-03} & \num{7.12e-03} & \num{7.12e-03} & \num{7.12e-03} & 2.00\\
\midrule
Rate ($\tau$) & &  -0.00 & -0.00 & -0.00 & -0.00 & \textbf{2.00} \\
    \bottomrule
  \end{tabular}
  \end{subtable}
  \begin{subtable}{1.0\textwidth}
  \centering
  \caption{$\eta_3$}
  \begin{tabular}{c|ccccc|c}
    \toprule
    N/dt  & $\tau_0$ & $\tau_0/2$ & $\tau_0 / 4$ & $\tau_0 /8$ & $\tau_0/16$ & Rate ($h$) \\
    \midrule
4 & \num{1.76e+00} & \num{1.77e+00} & \num{1.77e+00} & \num{1.77e+00} & \num{1.77e+00} & \\
8 & \num{4.51e-01} & \num{4.52e-01} & \num{4.52e-01} & \num{4.52e-01} & \num{4.52e-01} & 1.97\\
16 & \num{1.14e-01} & \num{1.14e-01} & \num{1.14e-01} & \num{1.14e-01} & \num{1.14e-01}& 1.99\\
32 & \num{2.85e-02} & \num{2.85e-02} & \num{2.85e-02} & \num{2.85e-02} & \num{2.85e-02}& 2.00\\
64 & \num{7.12e-03} & \num{7.13e-03} & \num{7.13e-03} & \num{7.13e-03} & \num{7.13e-03} & 2.00\\
\midrule
    Rate ($\tau$) & &  -0.00 & -0.00 & -0.00 & -0.00 & \textbf{2.00} \\
    \bottomrule
  \end{tabular}
  \end{subtable}
  \begin{subtable}{1.0\textwidth}
  \centering
  \caption{$\eta_4$}
  \begin{tabular}{c|ccccc|c}
    \toprule
    N/dt  & $\tau_0$ & $\tau_0/2$ & $\tau_0 / 4$ & $\tau_0 /8$ & $\tau_0/16$ & Rate ($h$) \\
    \midrule
4 & \num{1.25e+00} & \num{6.65e-01} & \num{3.39e-01} & \num{1.70e-01} & \num{8.54e-02 } & \\
8 & \num{1.28e+00} & \num{6.81e-01} & \num{3.47e-01} & \num{1.75e-01} & \num{8.76e-02} & -0.04\\
16 & \num{1.29e+00} & \num{6.85e-01} & \num{3.49e-01} & \num{1.76e-01} & \num{8.81e-02} & -0.01\\
32 & \num{1.29e+00} & \num{6.86e-01} & \num{3.50e-01} & \num{1.76e-01} & \num{8.83e-02} & -0.0\\
64 & \num{1.29e+00} & \num{6.86e-01} & \num{3.50e-01} & \num{1.76e-01} & \num{8.83e-02} & -0.0\\
    \midrule
    Rate ($\tau$) & &  0.91 & 0.97 & 0.99 & 1.00  & \textbf{1.00} \\
    \bottomrule
  \end{tabular}
  \end{subtable}
    \begin{subtable}{1.0\textwidth}
  \centering
  \caption{$\tilde{I}_{\rm eff}$}
  \begin{tabular}{c|ccccc}
    \toprule
    N/dt  & $\tau_0$ & $\tau_0/2$ & $\tau_0 / 4$ & $\tau_0 /8$ & $\tau_0/16$ \\
    \midrule
4 & \num{5.42e+00} & \num{5.56e+00} & \num{5.61e+00} & \num{5.61e+00} & \num{5.59e+00}\\
8 & \num{3.65e+00} & \num{4.16e+00} & \num{4.39e+00} & \num{4.44e+00} & \num{4.40e+00}\\
16 & \num{2.62e+00} & \num{3.15e+00} & \num{3.58e+00} & \num{3.80e+00} & \num{3.82e+00}\\
32 & \num{2.08e+00} & \num{2.47e+00} & \num{2.88e+00} & \num{3.29e+00} & \num{3.50e+00}\\
64 & \num{1.81e+00} & \num{2.06e+00} & \num{2.34e+00} & \num{2.74e+00} & \num{3.14e+00}\\
    \bottomrule
  \end{tabular}
  \label{tab:exp:2:I}
    \end{subtable}
  \caption{Error estimators $\eta_1, \eta_2, \eta_3, \eta_4$ and their
    rates of convergence, and Bochner efficiency indices
    $\tilde{I}_{\rm eff}$ for the smooth 3-network test case under
    uniform refinement in space (horizontal) and time (horizontal) $T
    = 0.4$, $\tau_0 = T/2$. Rate ($\tau$) is the rate for the finest
    mesh, under time step refinement. Rate ($h$) is the rate for the
    finest time step, under mesh refinement. The diagonal rate (in
    bold) is the final space-time (diagonal) rate.}
  \label{tab:exp:2}
\end{table}

\afterpage{\clearpage}

To examine the accuracy of the computed error estimators and resulting
error estimate, we first study an idealized test case with a
manufactured smooth solution over uniform meshes. We will consider
adaptive algorithms and meshes in the subsequent sections. All
numerical experiments were implemented using the FEniCS Project finite
element software~\citep{alnaes2015fenics}.

Let $\Omega = [0, 1]^2$ with coordinates $(x, y) \in \Omega$, and let
$T = 0.4$. We consider the case of three fluid networks ($J = 3$),
first with $\mu = 1.0$, $\lambda = 10.0$, $\alpha_j = 0.5$, $s_j =
1.0$, $\kappa_j = 1.0$, for $j = 1, 2, 3$ and $\gamma_{12} = \gamma_{23} =
\gamma_{13} = 1.0$. We define the following smooth solutions
to~\eqref{eq:mpet}:
\begin{align*}
  u(x, y) &= (0.1 \cos(\pi x) \sin(\pi y) \sin(\pi t), 0.1 \sin(\pi x) \cos(\pi y) \sin(\pi t)), \\
  p_1(x, y) &= \sin(\pi x) \cos(\pi y) \sin(2 \pi t), \\
  p_2(x, y) &= \cos(\pi x) \sin(\pi y) \sin(\pi t), \\
  p_3(x, y) &= \sin(\pi x) \sin(\pi y) t,
\end{align*}
with compatible Dirichlet boundary conditions, initial conditions and
induced force and source functions $f$ and $g_j$ for $j = 1, 2, 3$.

We approximate the solutions using Taylor--Hood type elements relative
to given families of meshes; i.e.~continuous piecewise quadratic
vector fields for the displacement and continuous piecewise linears
for each pressure. The exact solutions were approximated using
continuous piecewise cubic finite element spaces in the numerical
computations.

\subsection{Convergence and accuracy under uniform refinement}

We first consider the convergence of the numerical solutions, their
approximation errors and error estimators $\eta_1, \eta_2, \eta_3,
\eta_4$ under uniform refinement in space and time. We define the
meshes by dividing the domain into $N \times N$ squares and
dividing each subsquare by the diagonal. The errors and convergence
rates for the displacement and pressure approximations, measured in
natural Bochner norms, are listed in Table~\ref{tab:exp:1}. We observe
that both the spatial and the temporal discretization contributes to
the errors, and that all variables converges at at least first order
in space and time - as expected with the implicit Euler scheme. For
coarse meshes, we observe that the displacement converges at the
optimal second order under mesh refinement (Table~\ref{tab:exp:1:a}).

We next consider the convergence and accuracy of the error estimators
$\eta_1, \eta_2, \eta_3, \eta_4$ for the same set of discretizations
(Table~\ref{tab:exp:2}). We observe that each error estimator converge
at at least first order in space-time, with $\eta_2$ and $\eta_3$
converging at second order in space and $\eta_4$ converging at first
order in time\footnote{\changed{We observe that the error estimators
      $\eta_2$ and $\eta_3$ are nearly (but not quite) identical for
      this test case, and conjecture that this may be not entirely
      coincidental but related to the choice of the exact solution.}}.

\changed{We also define two efficiency indices $\tilde{I}_{\rm eff}$ and
$I_{\rm eff}$ with respect to the Bochner and energy norms,
respectively, for the evaluation of the approximation error:
\begin{equation}
  \label{eq:index}
  \tilde{I}_{\rm eff} = \frac{\eta}{\tilde{E}}, \quad  
  I_{\rm eff} = \frac{\eta}{E} , 
\end{equation}
}
where
\begin{align*}
  \label{eq:def:etaE}
  \eta & \equiv \eta_1 + \eta_2 + \eta_3 + \eta_4, \\
  \changed{\tilde{E}} &\equiv \nrm{u - u_{h\tau}}{L^{\infty}(0, T; H^1_0)} + 
  \nrm{p-p_{h\tau}}{L^{\infty}(0, T; L^2)} +
  \nrm{p-p_{h\tau}}{L^2(0, T; H^1_0)}  
  + \nrm{p-\pi^0 p_{h\tau}}{L^2(0, T;H^1_0)}, \\
  \changed{E } &\changed{\equiv \nrm{u - u_{h\tau}}{L^{\infty}(0, T; V_a)} + 
  \nrm{p-p_{h\tau}}{L^{\infty}(0, T; L_d)} +
  \nrm{p-p_{h\tau}}{L^2(0, T; V_d)}  
  + \nrm{p-\pi^0 p_{h\tau}}{L^2(0, T; V_d)}}.
\end{align*}
\changed{Note that we use both Bochner- and energy norms to
  investigate the practical quality and efficiency of the
  approximations and estimators as well as in terms of the
  energy/parameter-weighted norms appearing in the theoretical bound
  (Proposition~\ref{prop:error}).} For this test case, we find Bochner
efficiency indices between $1.8$ and $5.7$, with little variation in
this efficiency index between time steps for coarse meshes, and
efficiency indices closer to $1$ for finer meshes.

\subsection{Variations in material parameters}

\begin{figure}
  \begin{center}
    \includegraphics[width=0.49\textwidth]{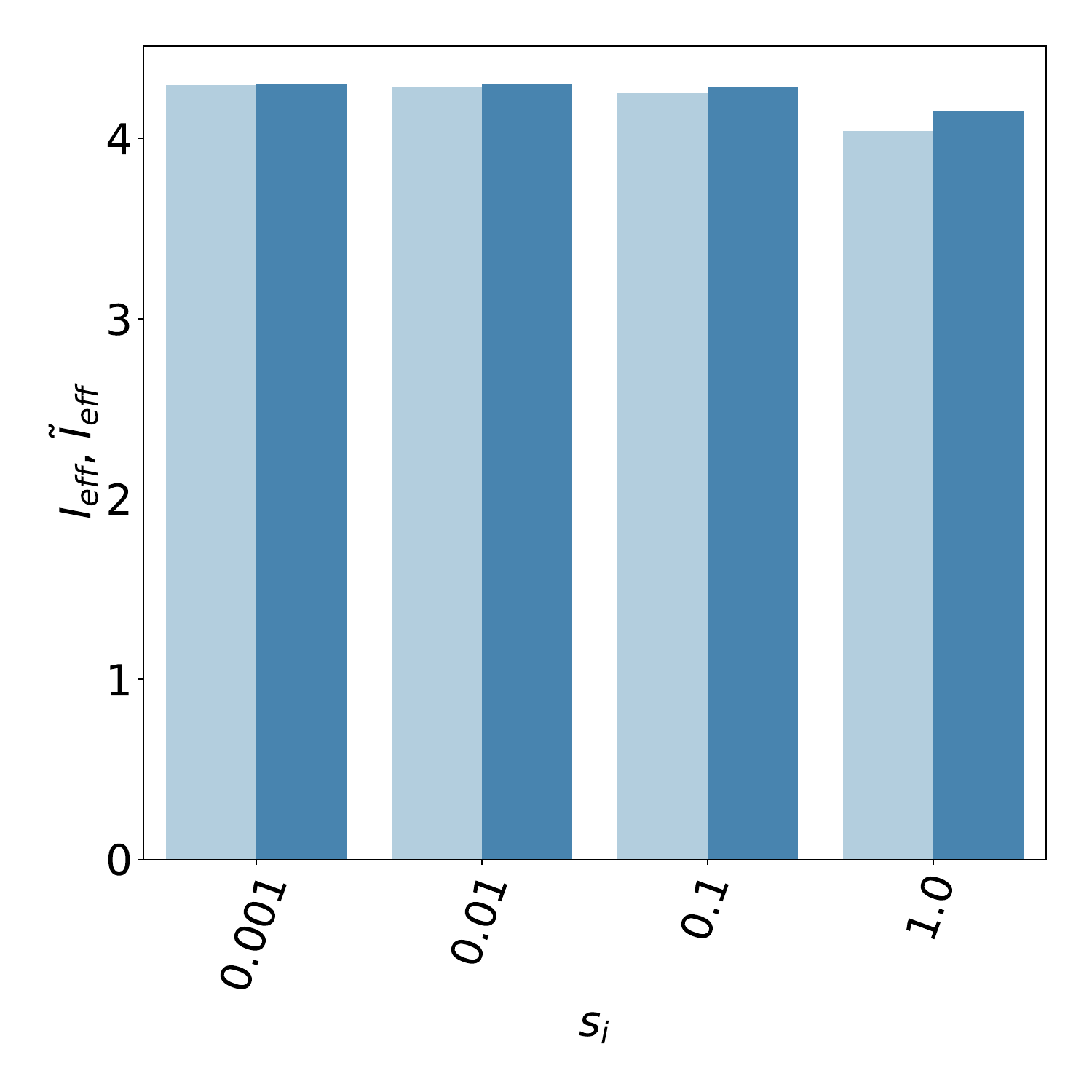}
    \includegraphics[width=0.49\textwidth]{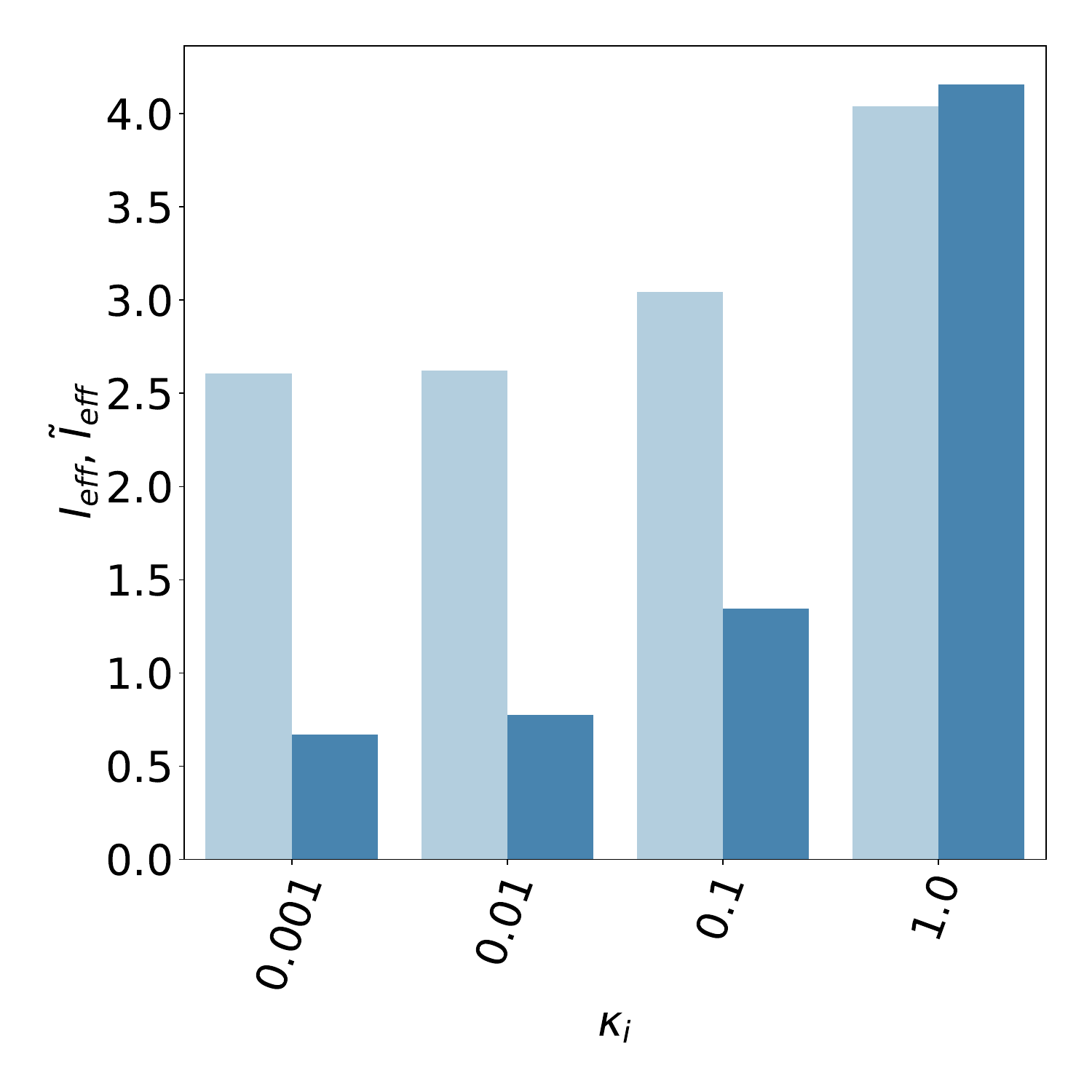} \\
    \includegraphics[width=0.49\textwidth]{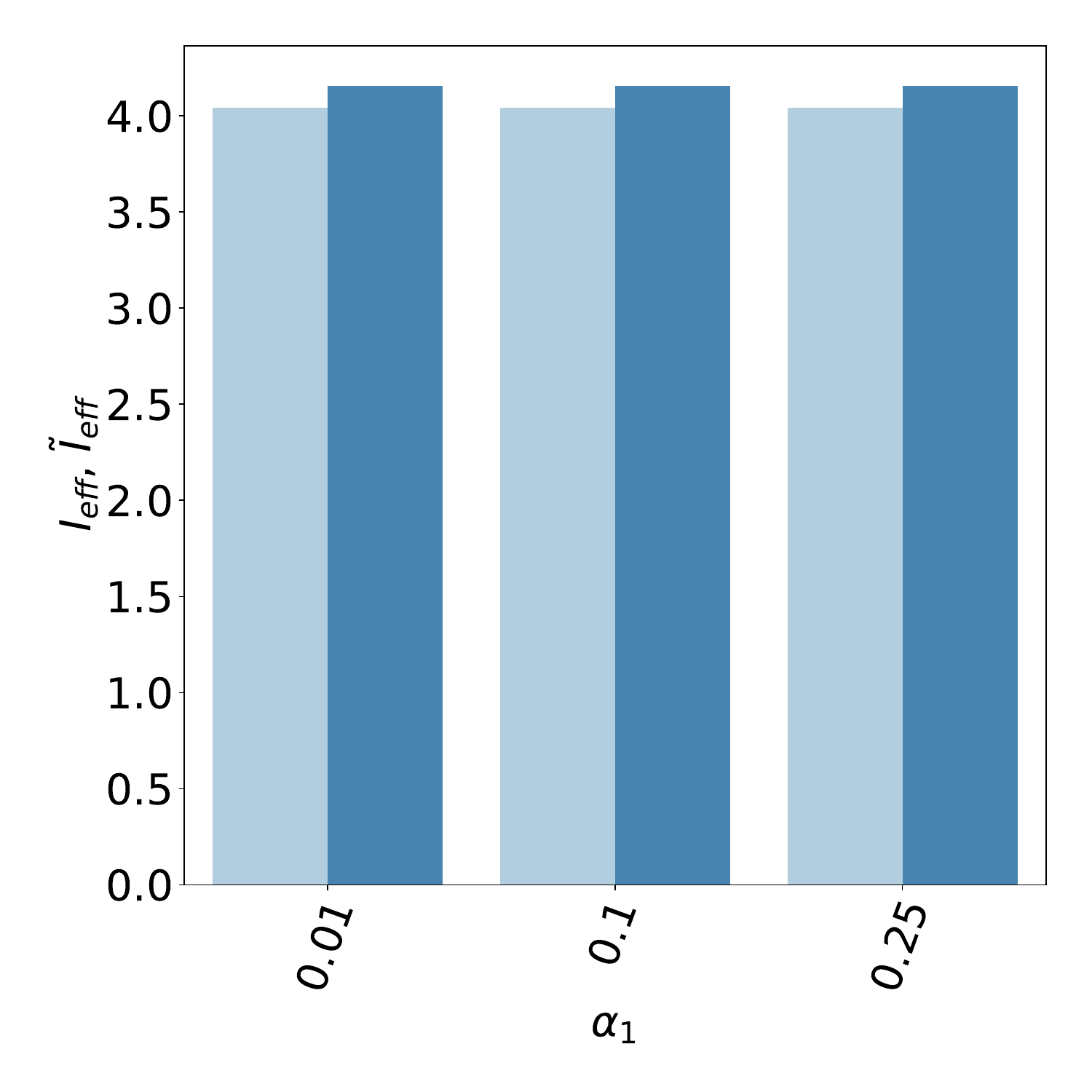}
    \includegraphics[width=0.49\textwidth]{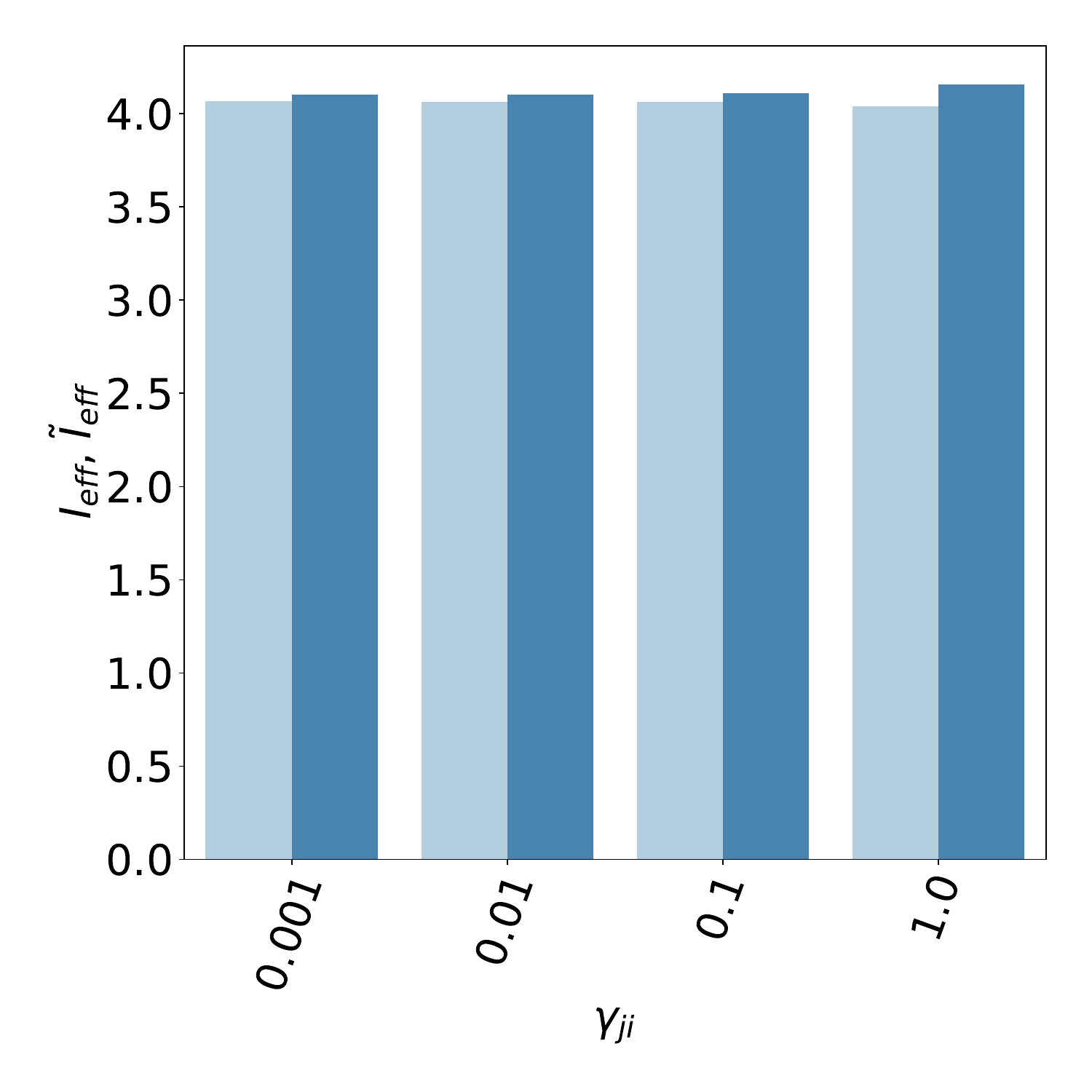} \\
    \includegraphics[width=0.49\textwidth]{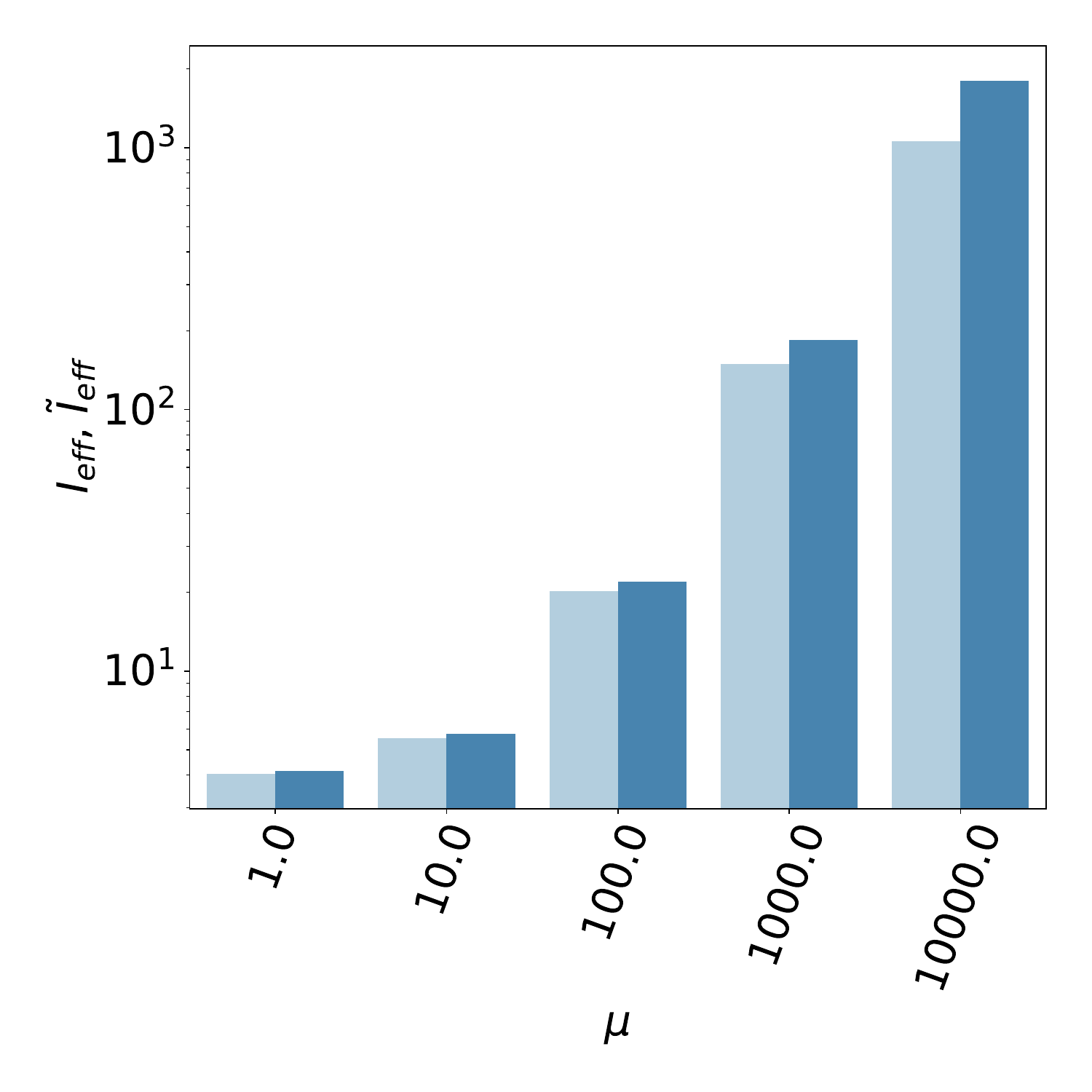}
    \includegraphics[width=0.49\textwidth]{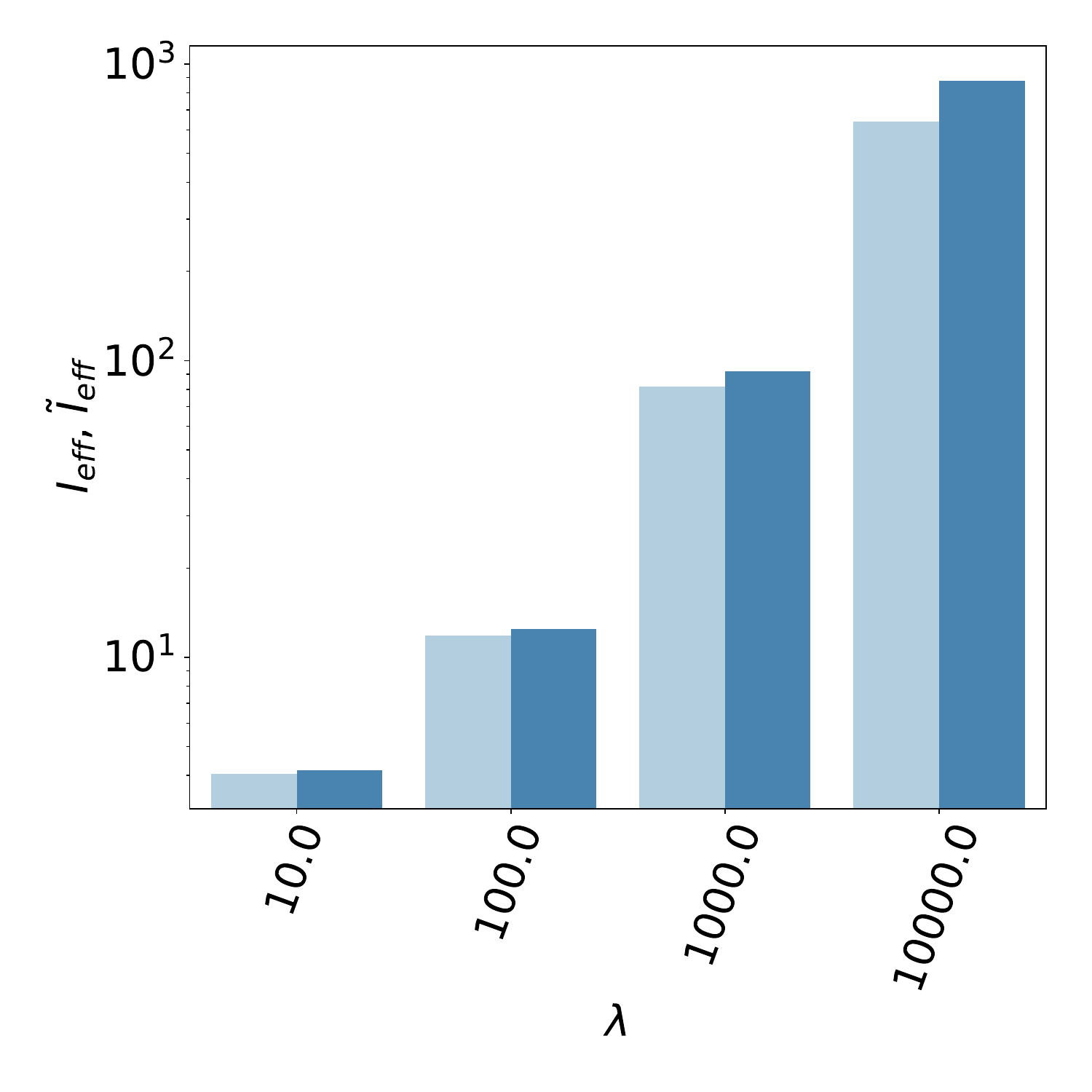} 
  \end{center}
  \caption{Efficiency indices for the smooth 3-network test case for
    different variations of the MPET material parameters $s_i$,
    $\kappa_i$, $\alpha_i$, $\gamma_{ji}$, $\mu$, and $\lambda$;
    energy-norm indices $I_{\rm eff}$ (light blue) and Bochner norm indices
    $\tilde{I}_{\rm eff}$ (middle blue). Numerical resolution parameters
    were kept fixed ($T = 0.4$, $\tau = 0.1$, $n=8$).}
  \label{fig:material}
\end{figure}
We also study how variations in the material parameters affect the
effectivity of the error estimates, \changed{measured in terms of the
  effectivity index $I_{\rm eff}$~\eqref{eq:index} with respect to the
  energy norm(s). We consider a set of default parameters: $\mu =
  1.0$, $\lambda = 10.0$, $\alpha_1 = 0.25$, $\alpha_2 = \alpha_3 -
  \alpha_1$, $\alpha_3 = 0.5$, $s_i = \kappa_i = \gamma_{ji} = 1.0$
  for $i = 1, 2, 3$, $j \not = i$, and subsequently independent
  variations in the material parameters representing} increased
stiffnesses $E$, reduced compressibilities $\nu$, lower transfer
$\gamma$, lower hydraulic conductances $\kappa$, and lower specific
storage coefficients $s$. \changed{Specifically, we consider $\alpha_1
  \in \{0.01, 0.1, 0.25\}$, $s_i, \kappa_i, \gamma_{ji} \in \{0.001,
  0.01, 0.1, 1.0 \}$ for $i = 1, 2, 3$, $j \not = i$, and $\mu \in
  \{1, 10, 100, 1000, 10000 \}$, $\lambda \in \{10, 100, 1000, 10000
  \}$. Both energy-norm and Bochner efficiency indices $I_{\rm eff}$
  and $\tilde{I}_{\rm eff}$ for the different variations are shown in
  Figure~\ref{fig:material}. }

\changed{The energy-norm efficiency indices $I_{\rm eff}$ are above 1
  for all material variations considered. Variations in the specific
  storage coefficients, Biot-Willis coefficients and transfer
  coefficients have minimal effect on both the energy- and Bochner
  norm efficiency indices: the efficiency indices are $\sim4$ for all
  variations in each of these parameters. For variations in the
  permeabilities $\kappa_i$, we observe some reduction in the
  energy-norm efficiency indices as the permeability is reduced (from
  $4.1$ to $2.6$), but that the index value stabilizes around $2.5$
  for the smaller permeabilities. We observe similar behaviour for the
  Bochner-norm efficiency index, but with index values of $\sim0.8$
  for the smaller permeabilities, and thus efficiency indices below
  $1.0$.} \changed{For the elastic parameters, the results are quite
  different. Both the energy-norm and Bochner efficiency indices
  increase substantially with increasing Lam\'e parameters $\mu$ and
  $\lambda$, though with Bochner efficiency indices increasing more.}

\changed{%
\begin{rem}
	The boundedness of efficiency indices, e.g.~$I_{\rm eff}$ and $\tilde{I}_{\rm eff}$, is canonically provided by the reverse inequality of Proposition~\ref{prop:error}, whereby the the error indicators are bounded above in terms of a constant times the norm of the discretization error. That is, one establishes 
	\begin{align*}
		\eta_1 + \eta_2 + \eta_3 + \eta_4 \lesssim \nrm{u-u_{h\tau}}{L^{\infty}(0,T;V_a)} &+ \nrm{p-p_{h\tau}}{L^{\infty}(0,T;L_d)} \\&+ \nrm{p-p_{h\tau}}{L^{2}(0,T;V_d)} + \nrm{p-\pi^0 p_{h\tau}}{L^2(0,T;V_d)}.
	\end{align*}
	If the constant of proportionality, in the above inequality, does not involve specific material parameters, then the efficiency indices are robust with respect to variations in those parameters.  However, as in the case of the use of higher-order time discretizations, the general Euler-Galerkin elliptic-parabolic framework of Ern and Meunier \cite{ern-meunier-2009} does not provide this bound and, as a result, its extension to the equations of generalized poroelasticity (MPET), presented herein, is limited in this same regard.  The computational experiments of this section suggest that such an estimate will entail a constant of proportionality that scales strongly with both $\mu$ and $\lambda$.  
\end{rem}
}

\section{Adaptive strategy: algorithmic considerations and numerical evaluation}
\label{sec:adaptivity}

\changed{We now turn to consider and evaluate two components of an overall
adaptive strategy: (i) temporal adaptivity (only) and (ii) temporal
and spatial adaptivity.} Our choices for the adaptive strategy can be
viewed in light of the observations on the convergence of $\eta_1,
\eta_2, \eta_3, \eta_4$ for the previous test case, as well as the
following characteristics of MPET problems arising in e.g.~biological
applications:
\begin{itemize}
\item
  Mathematical models of living tissue are often associated with a
  wide range of uncertainty e.g.~in terms of modelling assumptions,
  material parameters, and data fidelity. Simulations are therefore
  often not constrained by a precise numerical error tolerance, but
  rather by the limited availability of computational resources.
\item
  Living tissue often feature heterogeneous material parameters, but
  typically with small jumps, and in particular smoother variations
  than e.g.~in the geosciences. The corresponding MPET solutions are
  often relatively smooth.
\item
  Even for problems with a small number of networks $J$ such as single
  or two-network settings, the linear systems to be solved at each
  time step are relatively large already for moderately coarse meshes.
\end{itemize}
\changed{In light of these points, our target is an adaptive algorithm
  robustly reducing the error(s) given limited computational
  resources. We therefore consider an \emph{error balancing strategy}
  in which we adaptively refine time steps such that the estimated
  temporal and spatial contributions to the error is balanced, then
  refine the spatial mesh to reduce the overall error, and
  repeat. This approach to spatial adaptivity seeks to balance the
  computational gains associated with adaptively refined meshes and
  the computational and implementational overhead costs associated
  with more sophisticated time-space adaptive methods, see
  e.g.~\citep{ahmed2019adaptive, bendahmane2010multiresolution}. While
  a full space-time adaptive algorithm could yield time-varying meshes
  of lower computational cost, the computational costs associated with
  finite element matrix assembly over separate meshes, and
  interpolation of discrete fields between different meshes are often
  substantial. Without time-varying meshes, the blocks of the MPET
  linear operator can be reused (using the time steps $\tau_n$ as
  weights) which may reduce assembly time, and potentially linear
  system solver times. We note though that there is ample room for
  more sophisticated time step control methods than we consider here,
  see e.g.~\citep{soderlind2002automatic} and related works.}

\subsection{Time adaptivity}

We consider the time-adaptive scheme listed in
Algorithm~\ref{alg:time}. Overall, for a given mesh $\mathcal{T}_h$,
we step forward in time, evaluate (an approximation to) the error
estimators at the current time step, compare the spatial and temporal
contributions to the error estimators, and coarsen (or refine) the
time step if the spatial (or temporal) error dominates. 
\begin{algorithm}
\begin{algorithmic}[1]

  \State Define adaptive parameters \changed{$\alphafac \in [0, 1)$} and $\beta
  \geq 1$, $\tau_{\max} > 0$ and $\tau_{\min} \geq 0$.

  \State Assume that a mesh $\mathcal{T}_h$ and an initial time step size
  $\tau_0$ is given. Set $t^0$ and set the time step iterator $n = 0$.

  \While{$t^n < T$}
  \While{True}
  \State Set $n = n + 1$.
  \State Set $t^{\ast} = t^{n-1} + \tau_n$, and
  solve~\eqref{eq:discrete} over $\mathcal{T}_h$ for $(u^{\ast}_h,
  p^{\ast}_h)$ with time step size $\tau_n$
  \State Compute error estimator approximations at the current time step  
  \begin{equation*}
    \eta_1^n = (\tau_n \eta^n_p)^{\frac12}, \quad
    \eta_2^n = (\sup_{0 \geq m \geq n} \eta^m_u)^{\frac12}, \quad 
    \eta_3^n = \tau_n (\eta_u^n(\delta_t))^{\frac12}, \quad 
    \eta_4^n = (\tau_n \| p_h^n - p_h^{n-1} \|_d^2)^{\frac12},
  \end{equation*}
  \State and set $\eta_h^n = \eta_1^n + \eta_2^n + \eta_3^n$, $\eta_{\tau}^n
  = \eta_4^n$.
  \If{$\eta_{\tau}^n \leq (1 - \changed{\alphafac}) \eta_h$ and $\beta \tau_n \leq \tau_{\max}$}

  \State Set $t^n = t^{\ast}$, $(u^n_h, p^n_h) = (u^{\ast}_h,
  p^{\ast}_h)$, coarsen the next $\tau^{n+1} = \beta \tau^n$, and break loop.

  \ElsIf{$\eta_{\tau}^n \geq (1 + \changed{\alphafac}) \eta_h^n$ and $\tau_n/\beta \geq \tau_{\min}$}
  \State
  Discard the solution and refine the time step: set $t^n = t^{n-1}$, $n = n - 1$, $\tau^n = \tau^n/\beta$.
  \Else
  \State Set $t^n = t^{\ast}$, $(u^n_h, p^n_h) = (u^{\ast}_h,
  p^{\ast}_h)$, $\tau^{n+1} = \tau^n$, and break loop.
  \EndIf
  \EndWhile
  \EndWhile
\end{algorithmic}
\caption{Time-adaptive algorithm}
\label{alg:time}
\end{algorithm}

\subsubsection*{Evaluation of the time-adaptive algorithm on a smooth numerical test case}

We evaluate Algorithm~\ref{alg:time} using the numerical test case with smooth
solutions defined over 3 networks as introduced in
Section~\ref{sec:numerical-experiments}, the default material
parameters, and different uniform meshes (defined by $2 \times N
\times N$ triangles as before). We also verified the adaptive solver
by comparing the solutions at each time step and error estimators
resulting from rejected coarsening and refinement (resulting in
$\tau_n = 0.2$ for each $n$) with the solutions and error estimators
computed with a uniform time step ($\tau = 0.2$). We let $T = 1.0$,
and considered an initial time step of $\tau_0 = 0.2$, adaptive weight
$\changed{\alphafac} = 0$, a coarsening/refinement factor $\beta = 2$, and time
step bounds $\tau_{\max} = T$ and $\tau_{\min} = 0.0$.

The discrete times $t^n$ resulting from the adaptive algorithm, error
estimators $\eta_h^n$ and $\eta_{\tau^n}$ are shown in
Figure~\ref{fig:adaptive:time:1} for different uniform mesh
resolutions. For $N = 8$ (Figure~\ref{fig:adaptive:time:1:a}), we
observe that the adaptive algorithm estimates the initial time step of
$0.2$ to be unnecessarily small in light of the dominating spatial
error, and coarsens the time step to $0.4$ before quickly reaching the
end of time ($T$). The $\|u - u_{h \tau} \|_{L^{\infty}}$, $\| p -
p_{h \tau} \|_{L^{\infty}}$, and $\| p - p_{h \tau} \|_{L^2}$ errors
are $4.61 \times 10^{-3}$, $3.86 \times 10^{-2}$ and $6.83 \times
10^{-1}$. For comparison, with a uniform time step $\tau = 0.2$, the
$\|u - u_{h \tau} \|_{L^{\infty}}$, $\| p - p_{h \tau}
\|_{L^{\infty}}$, $\| p - p_{h \tau} \|_{L^2}$, and $\| p - \pi^0 p_{h
  \tau}\|_{L^2}$ (as listed in Table~\ref{tab:exp:1}) are $4.71 \times
10^{-3}$, $4.38 \times 10^{-2}$, $4.96 \times 10^{-1}$, and $1.33$
respectively, and thus the errors with the adaptively defined coarser
time step are very comparable - as targeted by our error balancing
principle. The picture changes for $N = 16$
(Figure~\ref{fig:adaptive:time:1:b}), in this case the temporal error
initially dominates the spatial error, and the time step is reduced
substantially initially before a subsequent increase and plateau at
$0.1-0.2$. The value of the adaptive error estimator $\eta_4$ is lower
than for the uniform solution ($1.23$ vs $2.17$), but the exact errors
are comparable between the uniform and adaptive scheme in this
case. By setting $\tau_{\min} = \tau^0/4$, the unnecessarily high
initial time step refinement is limited
(Figure~\ref{fig:adaptive:time:1:c}), and again comparable errors as
for the uniform time step are observed. For higher spatial resolution
and thus lower spatial errors ($N = 32$), similar observations hold
(Figure~\ref{fig:adaptive:time:1:d}), but now the adaptive solutions
approximately halve the exact errors compared to the uniform $\tau^0 =
0.2$ case (as expected). We conclude that the time adaptive scheme
efficiently balances the temporal and spatial error, but does little
for reducing the overall error -- as the spatial error dominates this
case. For $N = 64$ and the same configurations, the adaptive time step
reduces to the minimal threshold $\tau_0/4 = 0.05$ and remains there
until end of time $T$, with the expected quartering of the exact
errors compared to the $\tau_0 = 0.2$ case (and the first order
accuracy of the temporal discretization scheme).
\begin{figure}
  \begin{subfigure}[b]{0.49\textwidth}
    \centering
    \includegraphics[width=\textwidth]{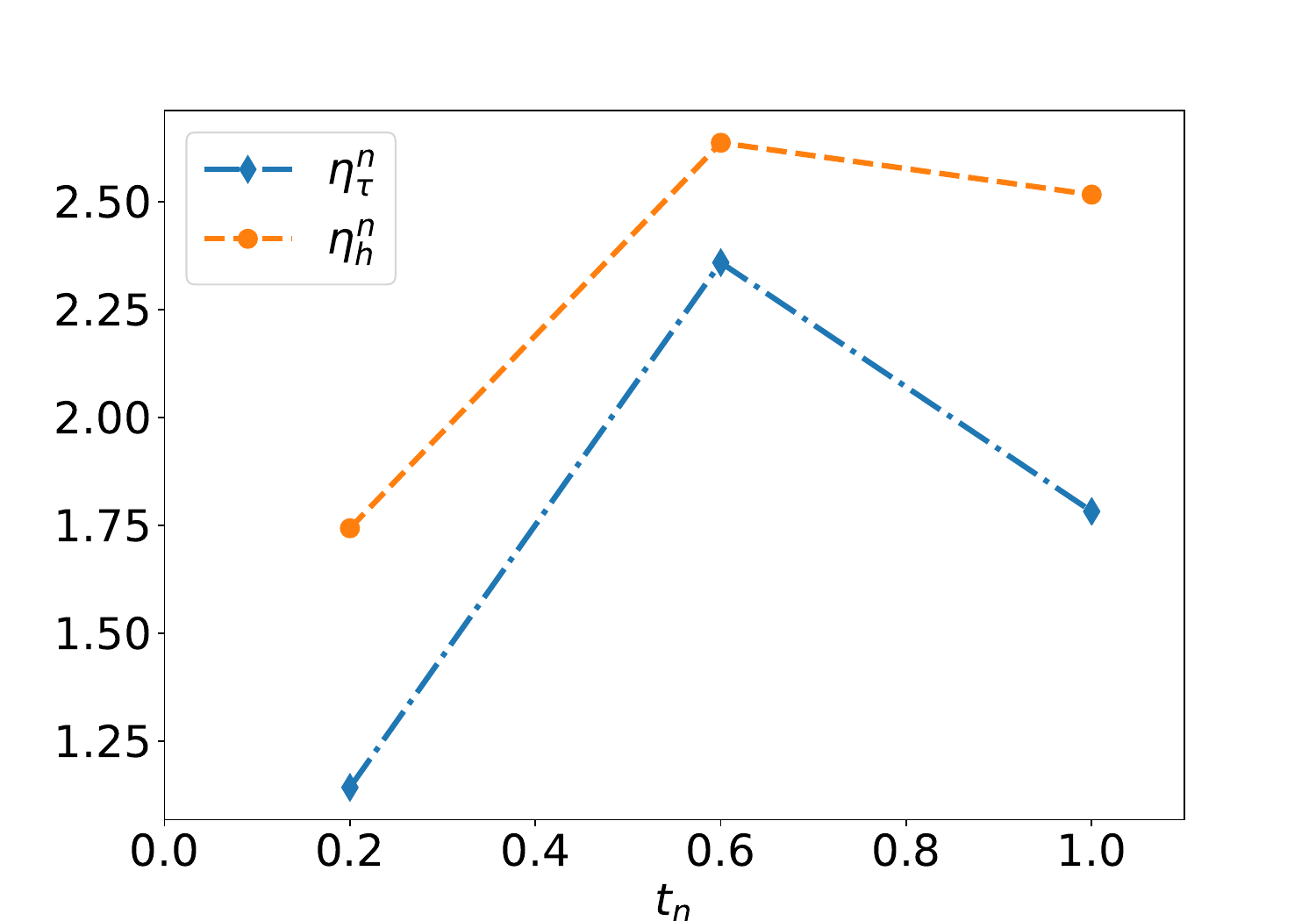}
    \caption{$N = 8$.}
  \label{fig:adaptive:time:1:a}
  \end{subfigure}
  \begin{subfigure}[b]{0.49\textwidth}
    \centering
    \includegraphics[width=\textwidth]{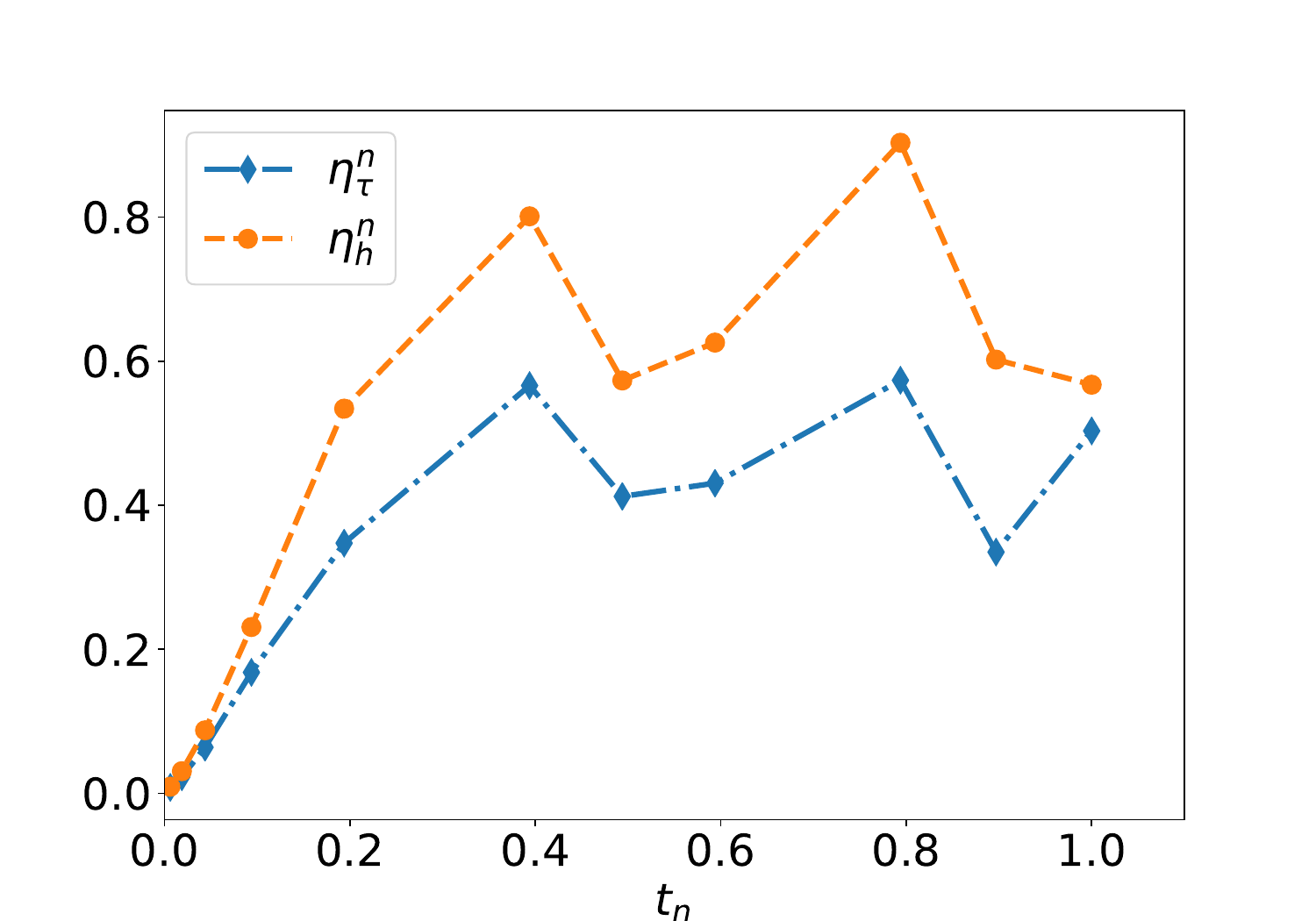}
    \caption{$N = 16$.}
  \label{fig:adaptive:time:1:b}
  \end{subfigure}
  \begin{subfigure}[b]{0.49\textwidth}
    \centering
    \includegraphics[width=\textwidth]{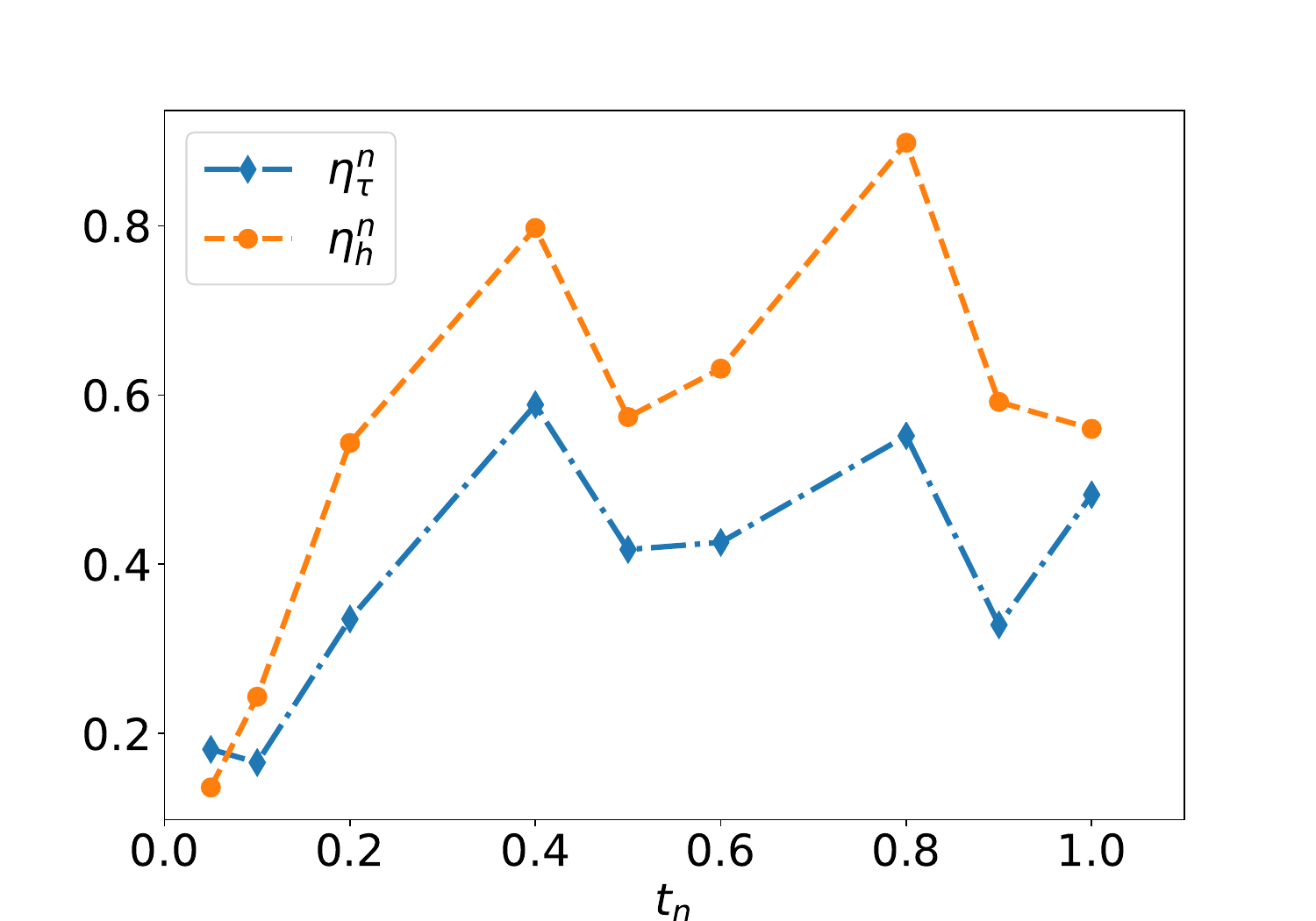}
    \caption{$N = 16$ and $\tau_{\min} = \tau^0/4$.}
  \label{fig:adaptive:time:1:c}
  \end{subfigure}
  \begin{subfigure}[b]{0.49\textwidth}
    \centering
    \includegraphics[width=\textwidth]{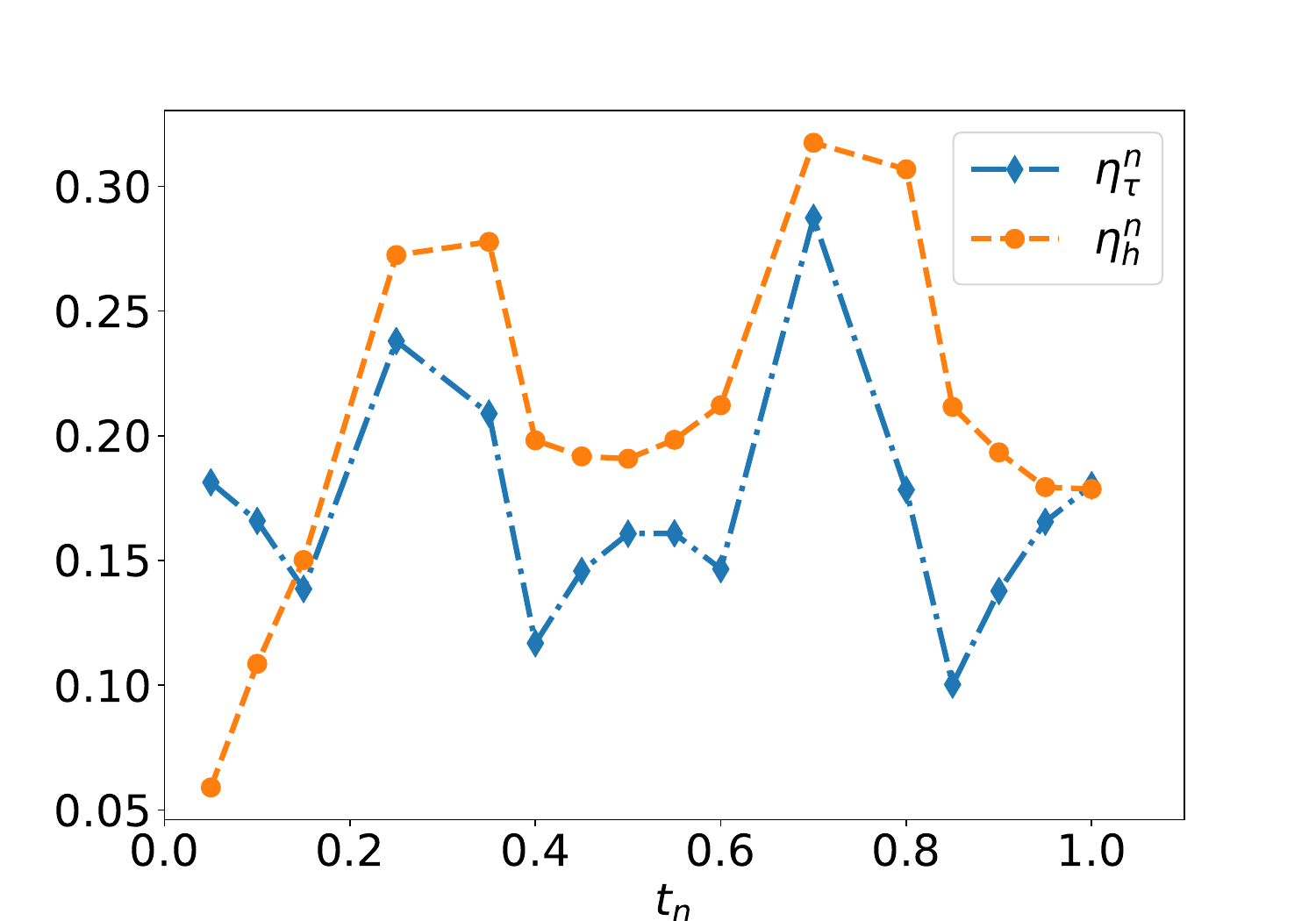}
    \caption{$N = 32$ and $\tau_{\min} = \tau^0/4$.}
  \label{fig:adaptive:time:1:d}
  \end{subfigure}
  \caption{Evaluation of adaptive time stepping for a smooth numerical
    test case, given uniform meshes and different adaptive parameter
    configurations. All plots show the approximated error estimators
    $\eta_h^n$ and $\eta_{\tau}^n$ at each time step versus adaptive
    times $t^n$. }
  \label{fig:adaptive:time:1}
\end{figure}

\subsection{Spatial adaptivity}

For the spatial adaptivity, we use adaptive mesh (h-)refinement
based on local error indicators $\{ \eta_K \}_{K \in \mathcal{T}_h}$
derived from the global error estimators~\eqref{eq:etas}. In light of
the theoretical and empirical observation that $\eta_4$ primarily
contributes to the temporal error, we will rely on local contributions
to $\eta_1, \eta_2$ and $\eta_3$ only for the local error
indicators. Specifically, we will let
\begin{equation}
  \eta_K = \eta_{1, K} + \eta_{2, K} + \eta_{3, K}
  \label{eq:eta_K}
\end{equation}
where
\begin{align}
  \eta_{1, K} = \left ( \sum_{n = 1}^N \tau_n \eta_{p, K}^n \right )^{\frac12}, 
  \quad
  \eta_{2, K} = \left ( \sup_{0 \leq n \leq N} \eta_{u, K}^n \right )^{\frac12},
  \quad
  \eta_{3, K} = \sum_{n = 1}^N \tau_n \left (\eta_{u, K}^n (\delta_t) \right )^{\frac12}.
\end{align}
for each $K \in \mathcal{T}_h$. The complete space-time adaptive
algorithm is given in Algorithm~\ref{alg:space}. We here choose to use
D\"orfler marking~\citep{dorfler1996convergent} or a maximal marking
strategy in which the $\gamma_M$ of the total number of cells with the
largest error indicators are marked for refinement, but other marking
strategies could of course also be used.
\begin{algorithm}
\begin{algorithmic}[1]
  \State Assume that an error tolerance $\epsilon$ and/or a resource
  limit $L$ and an initial mesh $\mathcal{T} = \mathcal{T}_h^0$ are
  given. Set a marking fraction parameter $\gamma_M \in (0, 1]$.

  \While{True}

   \State Set the parameters ($\tau_0, \changed{\alphafac}, \beta, \tau_{\max},
   \tau_{\min}$) required by Algorithm~\ref{alg:time}.
   
  \State Solve~\eqref{eq:discrete} over $\mathcal{T}$ via the time-adaptive scheme defined by Algorithm~\ref{alg:time}.

  \State Estimate the error $\eta = \eta_1 + \eta_2 + \eta_3 + \eta_4$
  where $\eta_i$ for $i = 1, 2, 3, 4$ are given by~\eqref{eq:etas}.

  \If{$\eta < \epsilon$}
  \State Break
  \EndIf

  \State Compute spatial error estimators $\eta_K$ for $K \in
  \mathcal{T}$ via~\eqref{eq:eta_K}.

  \State From $\{ \eta_K \}$, define Boolean refinement markers $\{
  y_K \}_{K \in \mathcal{T}_h}$ via D\"orfler or maximal marking (with
  $\gamma_M$).

  \State Refine $\mathcal{T}$ (locally) based on the markers $\{ y_K
  \}$.

  \If{$|\mathcal{T}| > L$}
    \State Break
  \EndIf
  
  \EndWhile
\end{algorithmic}
\caption{Space-time adaptive algorithm}
\label{alg:space}
\end{algorithm}
\changed{
\begin{rem}
  In Algorithm~\ref{alg:space}, we suggest adapting the mesh in each
  outer iteration via only (local) mesh refinements. One could equally
  well consider a combination of local mesh refinement and coarsening,
  and/or other adaptive mesh techniques such as r-refinement. Indeed,
  this could be particularly relevant in connection with complex
  geometries, for which the initial mesh may be overly fine (in terms
  of approximation power) in geometrically involved local
  regions. 
\end{rem}
}

\subsubsection*{Evaluation of the space-time-adaptive algorithm on a smooth numerical test case}
We evaluate Algorithm~\ref{alg:space} using the numerical test case
with smooth solutions defined over 3 networks as introduced in
Section~\ref{sec:numerical-experiments} with the default material
parameters. As this is a smooth test case in a regular domain, we
expect only moderate efficiency improvements (if any) from adaptive
mesh refinement, and therefore primarily evaluate the accuracy of the
error estimators on adaptively refined meshes and the balance between
temporal and spatial adaptivity.

\begin{figure}
  \begin{subfigure}[t]{0.49\textwidth}
    \centering
    \captionsetup{width=.8\linewidth}%
    \includegraphics[width=\textwidth]{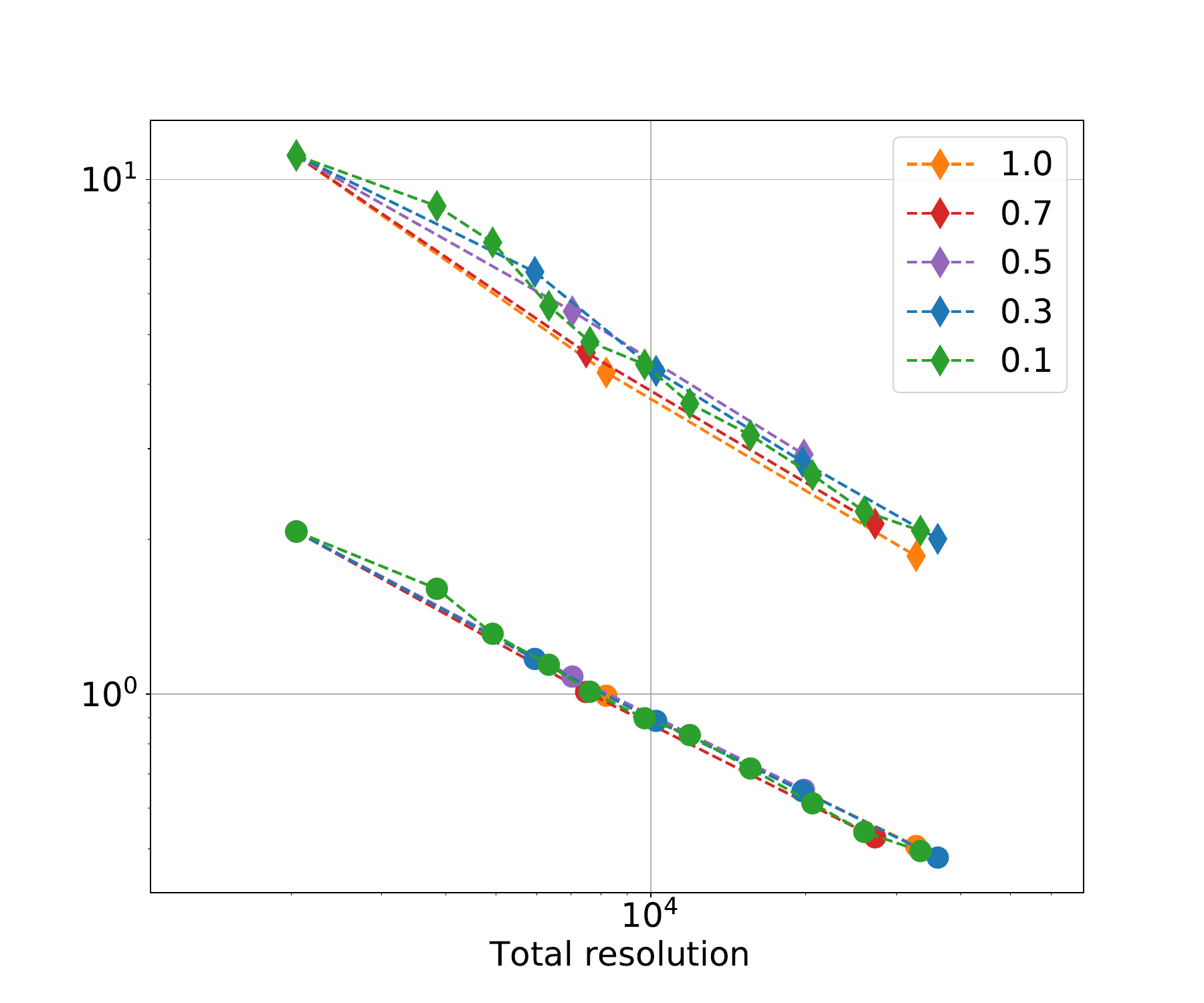}
    \caption{Approximation error $E$ (dots) and error estimates $\eta$
      (diamonds) versus total resolution ($|\mathcal{T}| \times N$) at
      different adaptive iterations with a uniform timestep $\tau =
      1/64$ for different D\"orfler marking fractions ($1.0, 0.7, 0.5,
      0.3, 0.1$).}
    \label{fig:adaptive:space:1:a}
  \end{subfigure}
  \begin{subfigure}[t]{0.49\textwidth}
    \centering
    \captionsetup{width=.8\linewidth}%
    \includegraphics[width=\textwidth]{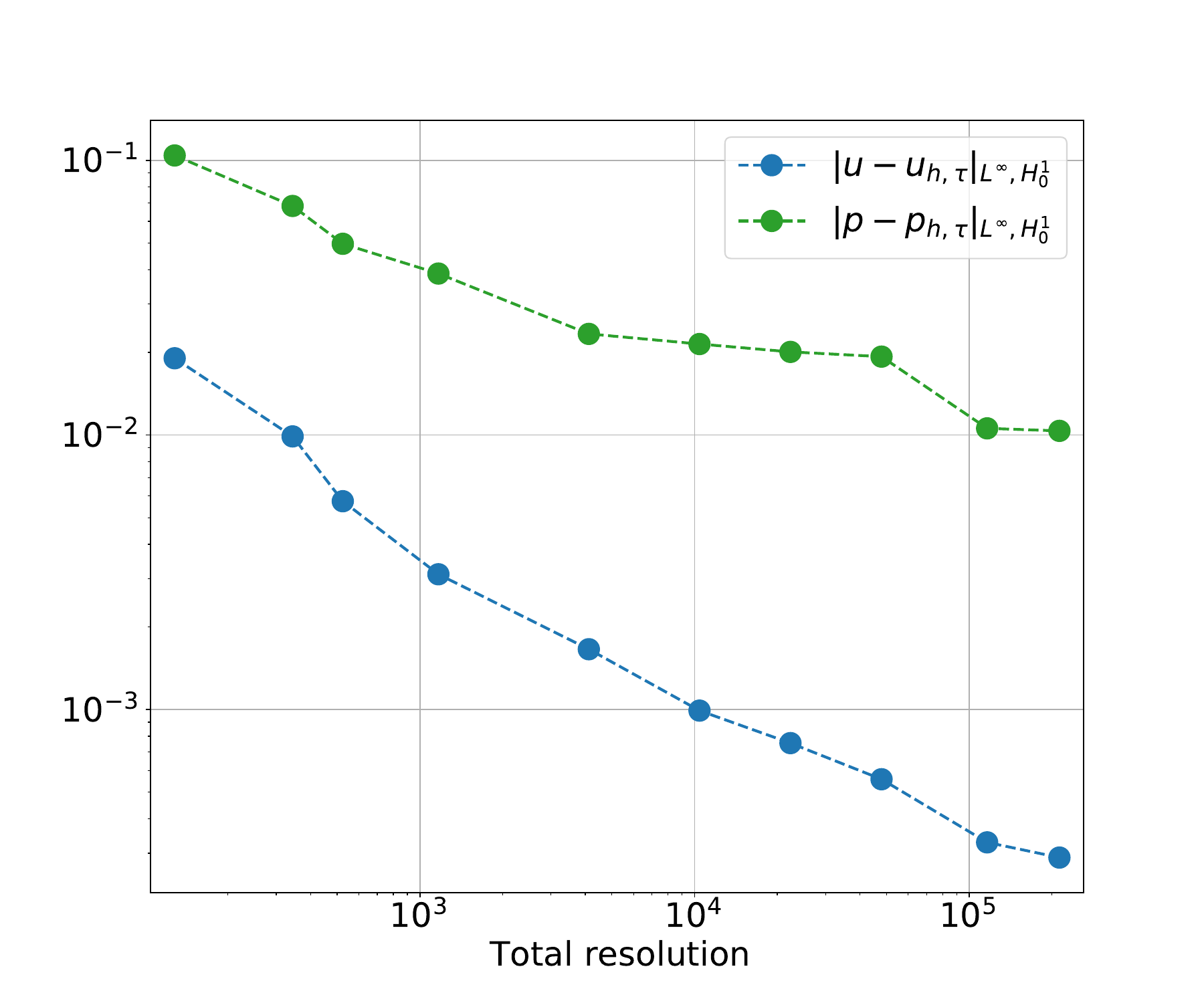}
    \caption{Displacement and pressure approximation errors under
      adaptive refinement as generated by the space-time adaptive
      algorithm with $\tau_0 = T/4$, $\changed{\alphafac} = 0.3$, $\beta = 2.0$,
      $\gamma_M = 0.3$, $\tau_{\max} = \tau_0$, $\tau_{\min} =
      \tau_0/16$, and $L \approx 8000$.}
    \label{fig:adaptive:space:1:b}
  \end{subfigure}
  \begin{subfigure}[t]{0.49\textwidth}
    \centering
    \captionsetup{width=.8\linewidth}%
    \includegraphics[width=0.8\textwidth]{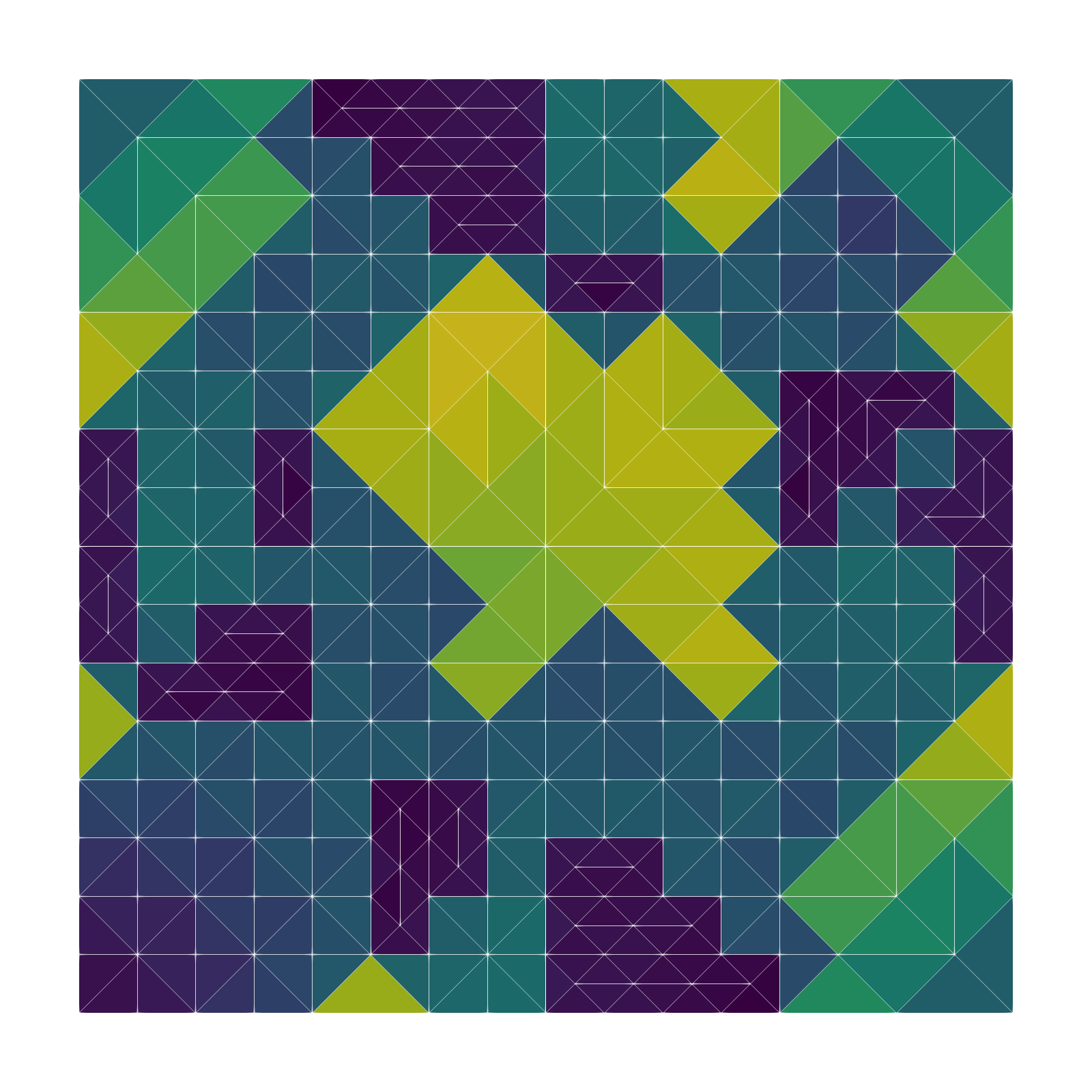}
    \caption{Error indicators on final mesh refinement level with
      yellow values indicating high error indicators (colormap:
      viridis) (same parameters as in
      Figure~\ref{fig:adaptive:space:1:b}).}
    \label{fig:adaptive:space:1:c}
  \end{subfigure}
  \begin{subfigure}[t]{0.49\textwidth}
    \centering
    \captionsetup{width=.8\linewidth}%
    \includegraphics[width=\textwidth]{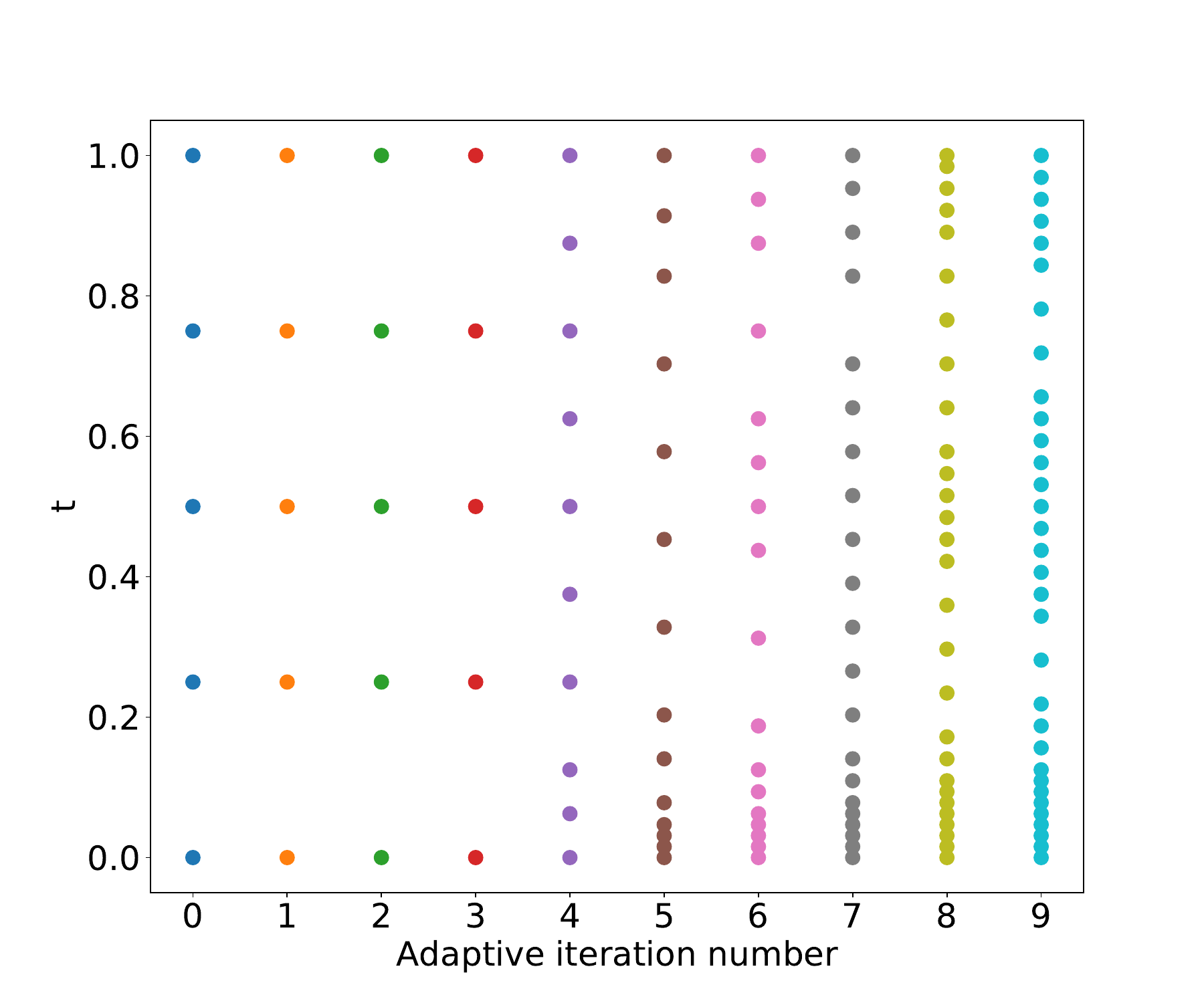}
    \caption{Discrete times (and time steps) generated by the
      space-time adaptive algorithm (same parameters as in
      Figure~\ref{fig:adaptive:space:1:b}).}
    \label{fig:adaptive:space:1:d}
  \end{subfigure}
  \caption{Evaluation of the space-time adaptive algorithm on a smooth test case.}
  \label{fig:adaptive:space:1}
\end{figure}
We set $T = 1.0$, $\tau_0 = 0.5$, and begin with a $2 \times 4 \times
4$ mesh of the unit square as $\mathcal{T}_h^0$. We set $\epsilon =
0$, but instead prescribe a resource tolerance \changed{$L$}. We first set a
fine initial time step $\tau_0 = T/64$, and let $\beta = 2.0$, $\changed{\alphafac}
= 0.3$, $\tau_{\min} = \tau_0/16$, and $\tau_{\max} = \tau_0$ in
Algorithm~\ref{alg:time}. We note that a D\"orfler marking fraction
$\gamma_M$ of $1.0$ yields a series of uniformly refined meshes. For
marking fractions between 0.0 and 1.0, we obtain adaptively refined
meshes, yet for this test case, the time step remains uniform
throughout the adaptive loop. The resulting errors and error estimates
at each adaptive refinement iteration are shown in
Figure~\ref{fig:adaptive:space:1:a}. We observe that the errors decay
as expected, and that the error estimates provide upper bounds for the
errors $E$ at each refinement level for all marking fractions tested.

We next let $\tau_0 = T/4$ and $\gamma_M = 0.3$ (and all other
parameters as before), and consider the results of the adaptive
algorithm
(Figure~\ref{fig:adaptive:space:1:b}--\ref{fig:adaptive:space:1:d}). We
find that the adaptive algorithm keeps the initial time step and
refines the mesh only for the first 4 iterations, which substantially 
\changed{reduces} the $L^{\infty} H^1_0$ displacement approximation 
error and moderately reduces the $L^{\infty} H^1_0$ pressure 
approximation error. For the next iterations, both the mesh and the 
time step is refined. The pressure errors seem to plateau before 
continuing to reduce given sufficient mesh refinement, while the 
displacement errors steadily decrease.

\section{Adaptive brain modelling and simulation}
\label{sec:brain}

We turn to consider a physiologically and computationally realistic
scenario for simulating the poroelastic response of the human
brain. Human brains form highly non-trivial, non-convex domains
characterized by narrow gyri and deep sulci, and as such represent a
challenge for mesh generation algorithms. Therefore, brain meshes are
typically constructed to accurately represent the surface geometry,
without particular concern for numerical approximation properties. We
therefore ask whether the adaptive algorithm presented here can
effectively and without further human intervention improve the
numerical approximation of key physiological quantities of interest
starting from a moderately coarse initial mesh and initial time
step.

Specifically, we let $\Omega$ be defined by a subject-specific left
brain hemisphere mesh (Figure~\ref{fig:brain:a}) generated from
MRI-data via FreeSurfer~\citep{fischl2012freesurfer} and SVMTK as
described e.g.~in~\citep{mardal2021mathematical}. The domain boundary
is partitioned in two main parts: the semi-inner boundary enclosing
the left lateral ventricle $\partial \Omega_v$ and the remaining
boundary $\partial \Omega_s$ (Figure~\ref{fig:brain:b}).

Over this domain, we consider the MPET equations~\eqref{eq:weak} with
$J = 3$ fluid networks representing an arteriole/capillary network ($j
= 1$), a low-pressure venous network ($j = 2$), and a perivascular
space network ($j = 3$). We assume that the two first networks are
filled with blood, while the third network is filled with
cerebrospinal fluid (CSF). 

\subsection{Pulsatility driven by fluid influx}
\label{sec:brain:source}

We consider a scenario in which fluid influx is represented by a
pulsatile uniform source in the arteriole/capillary network ($j = 1$):
\begin{equation}
  g_1(x, t) = g_1(t) =  \frac12 (1 - \cos (2 \pi t)),
\end{equation}
while we set $g_2 = g_3 = f = 0$. From the arteriole/capillary
network, fluid can transfer either into the venous network or into the
perivascular network with rates $\gamma_{12}, \gamma_{13} > 0$, while
$\gamma_{23} = 0$. All material parameters are given in
Table~\ref{tab:brain:materials}.
\begin{table}
  \centering
  \begin{tabular}[width=\linewidth]{lll}
    \toprule 
    Parameter & Value & Note \\
    \midrule
    $E$ (Young's modulus) & $1.642 \times 10^3$ Pa & \citep{budday2015mechanical} (gray/white average)\\
    $\nu$ (Poisson's ratio) & $0.497$ & $\ast$ \\
    $s_1$ (arteriole storage coefficient) & $2.9 \times 10^{-4}$ Pa$^{-1}$ & \citep{guo2019validation} (arterial network) \\ 
    $s_2$ (venous storage coefficient) & $1.5 \times 10^{-5}$ Pa$^{-1}$ & \citep{guo2019validation} (venous network) \\ 
    $s_3$ (perivascular storage coefficient) & $2.9 \times 10^{-4}$ Pa$^{-1}$ & \citep{guo2019validation} (arterial network) \\ 
    $\alpha_1$ (arteriole Biot-Willis parameter) & $0.4$ & $\ast$ \\
    $\alpha_2$ (venous Biot-Willis parameter) & $0.2$ & $\ast$ \\
    $\alpha_3$ (perivascular Biot-Willis parameter) & $0.4$ & $\ast$ \\
    $\kappa_1$ (arteriole hydraulic conductance) & $3.75 \times 10^{-2}$ mm$^2$ Pa$^{-1}$ s$^{-1}$ & $k_1$/$\mu_1$, see below \\
    $\kappa_2$ (venous hydraulic conductance) & $3.75 \times 10^{-2}$ mm$^2$ Pa$^{-1}$ s$^{-1}$ & $k_2$/$\mu_1$, see below \\
    $\kappa_3$ (perivascular hydraulic conductance) & $1.43 \times 10^{-1}$ mm$^2$ Pa$^{-1}$ s$^{-1}$ & $k_3$/$\mu_3$, see below  \\
    $\gamma_{12}$ (arteriole-venous transfer) & $1.0 \times 10^{-3}$  Pa$^{-1}$ s$^{-1}$ & $\ast$ \\
    $\gamma_{13}$ (arteriole-perivascular transfer) & $1.0 \times 10^{-4}$ Pa$^{-1}$ s$^{-1}$ & $\ast$ \\
    $C$ (environment compliance) & $10$ & $\ast$ \\ 
    $R$ (environment resistance) & $79.8$ Pa / (mm$^3$ / s) & \citep{vinje2020intracranial} \\
    \midrule
    $k_1$ (arteriole permeability) & $1.0 \times 10^{-10}$ m$^2$ & \citep{guo2019validation} (arterial network) \\
    $k_2$ (venous permeability) & $1.0 \times 10^{-10}$ m$^2$ & \citep{guo2019validation} (venous network) \\
    $k_3$ (perivascular permeability) & $1.0 \times 10^{-10}$ m$^2$ & Estimate, vascular permeability \\
    $\mu_1$ (blood dynamic viscosity)  & $2.67 \times 10^{-3}$ Pa s & \citep{guo2019validation} (arterial network) \\
    $\mu_3$ (CSF dynamic viscosity) & $6.97 \times 10^{-4}$ Pa s & \citep{daversin2020mechanisms} (water at body temperature) \\
    \bottomrule
  \end{tabular}
  \caption{Material parameters corresponding to a human brain at body
    temperature. The hydraulic conductances $\kappa$ are defined in
    terms of the permeabilities and the fluid viscosities $\kappa_j =
    k_j/\mu_j$, $\mu_2 = \mu_1$. Values marked by the $\ast$ are
    estimates, yielding physiologically reasonable brain
    displacements, fluid pressures, and fluid velocities.}
  \label{tab:brain:materials}
\end{table}

In terms of boundary conditions for the momentum equation, we set
\begin{subequations}
  \begin{align}
    u &= 0 \quad &&\text{ on the outer  boundary } \partial \Omega_{s}, \\
    (\sigma - \ssum_{j} \alpha_j p_j I) \cdot n &= - p_{\rm csf} \, n \quad &&\text{ on the inner boundary } \partial \Omega_{v}.
  \end{align}
  \label{eq:brain:bcs}%
\end{subequations}
for a spatially-constant $p_{\rm csf}$ to be defined below. For the
arteriole space, we assume no boundary flux:
\begin{equation}
  \Grad p_1 \cdot n = 0 \quad \text{ on  } \partial \Omega. 
\end{equation}
We assume that the venous network is connected to a low (zero)
pressure compartment and set:
\begin{equation}
  p_2 = 0 \quad \text{ on } \partial \Omega. 
\end{equation}
We assume that the perivascular space is in direct contact with its environment, and set:
\begin{equation}
  p_3 = p_{\rm csf} \quad \text{ on } \partial \Omega.
  \label{eq:brain:bcs3}
\end{equation}
Last, we model $p_{\rm csf}$ via a simple Windkessel model at the boundary:
\begin{equation}
  C \dot{p}_{\rm csf} = Q - \frac{p_{\rm csf}}{R}
\end{equation}
with compliance $C$ and a resistance $R$ (see
Table~\ref{tab:brain:materials}), and where $Q$ is the outflow: $Q =
\int_{\partial \Omega} u \cdot n \ds$. After an explicit time
discretization, we define at each time step
\begin{equation}
  \label{eq:pcsf}
  C p_{\rm csf}^{n+1} = \tau_n Q^n + (C - \frac{\tau_n}{R}) p_{\rm csf}^n,
\end{equation}
and use~\eqref{eq:pcsf} in~\eqref{eq:brain:bcs}
and~\eqref{eq:brain:bcs3}. Finally, we let all fields start at
zero. We let $T = 2.0$ corresponding to two cardiac cycles, and an
initial time step of $\tau_0 = 0.1$.
\begin{figure}
  \begin{subfigure}[t]{0.4\textwidth}
    \centering
    \captionsetup{width=.9\linewidth}%
    \includegraphics[height=3cm]{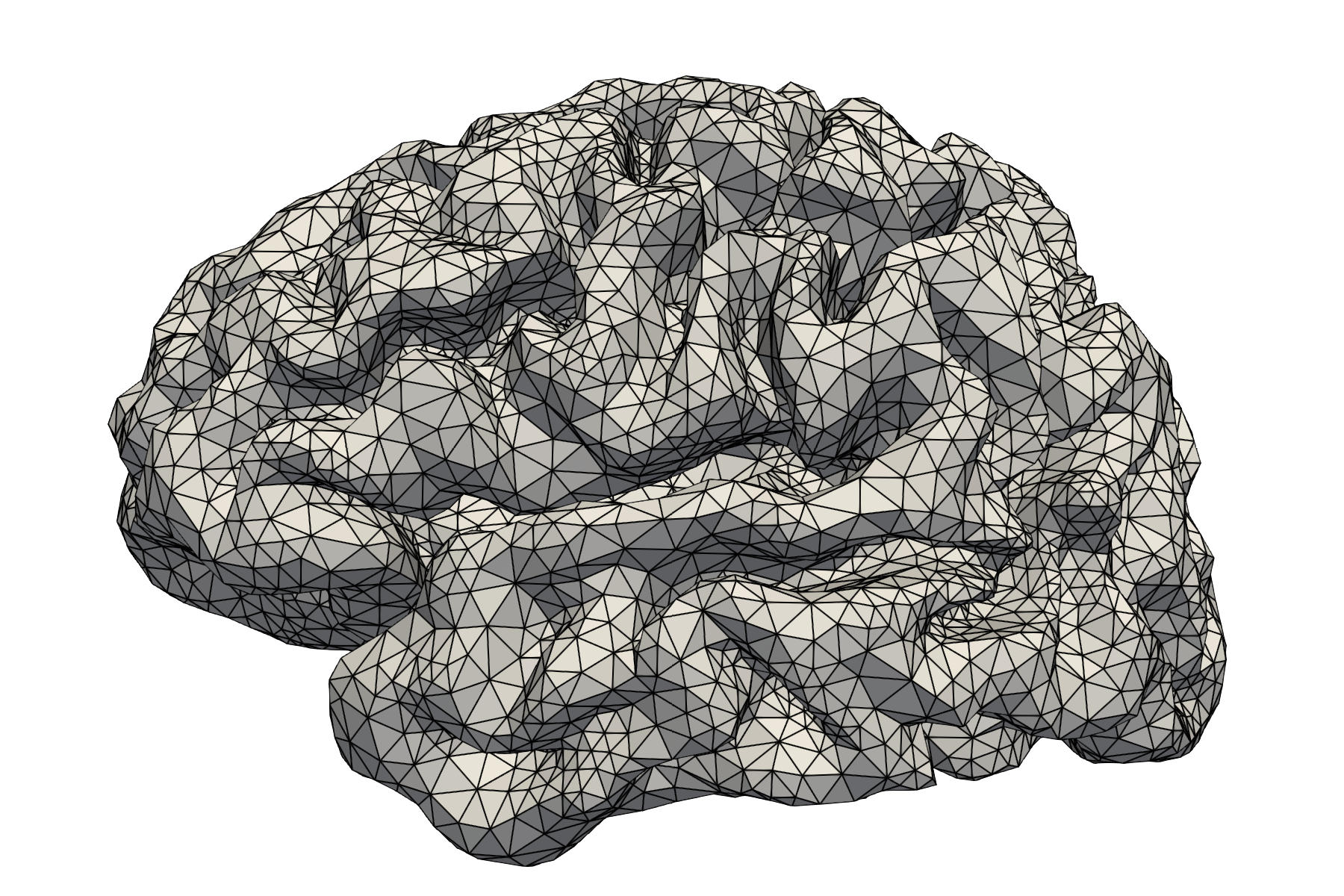}
    \caption{Initial mesh of a brain hemisphere (sagittal view, along
      positive x-axis) with 20911 cells and 6325 vertices, and a
      volume of $4.37 \times 10^5$ mm$^3$.}
    \label{fig:brain:a}
  \end{subfigure}
  \begin{subfigure}[t]{0.59\textwidth}
    \centering
    \captionsetup{width=.9\linewidth}%
    \includegraphics[height=3cm]{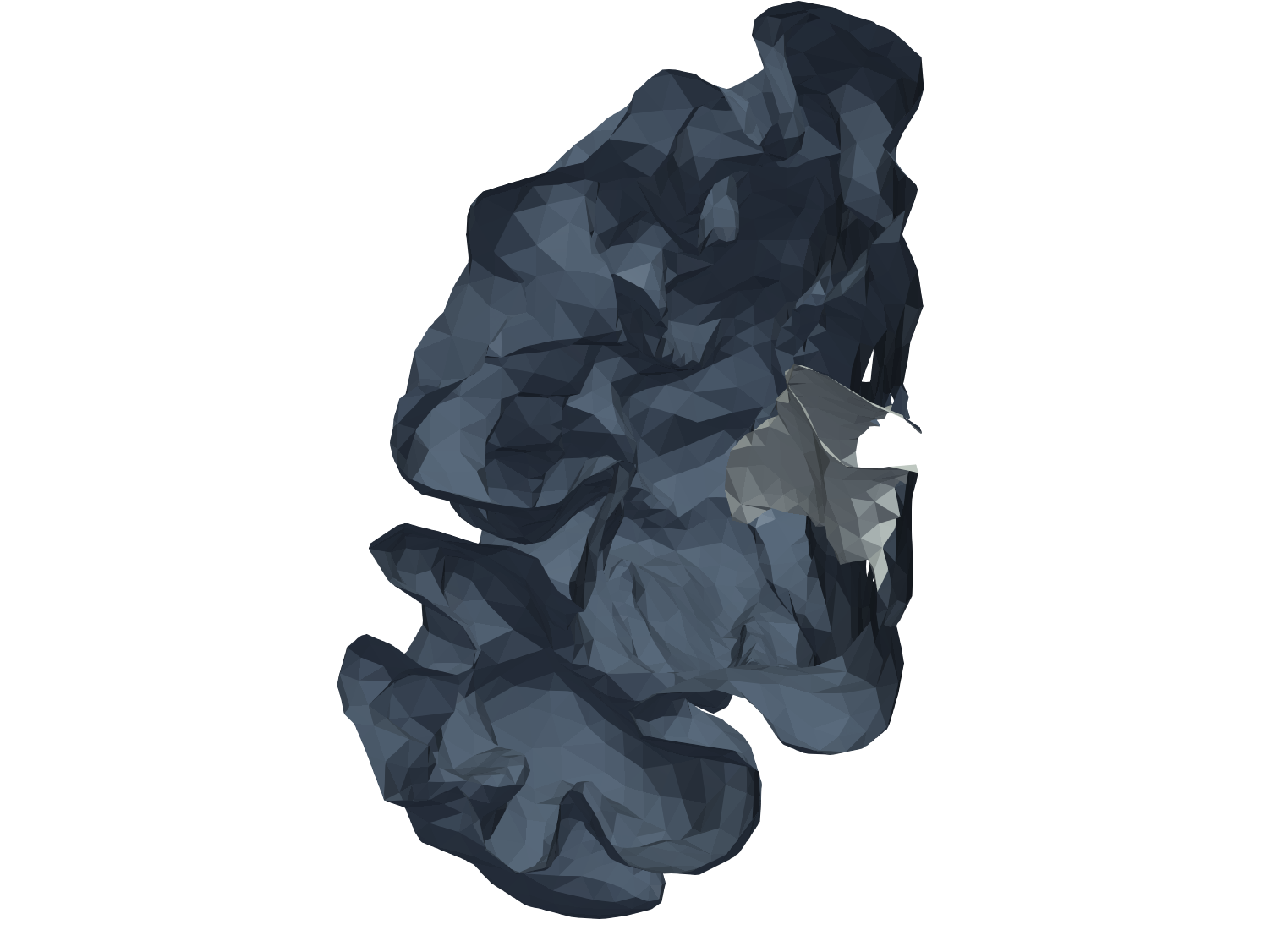}
    \includegraphics[height=3cm]{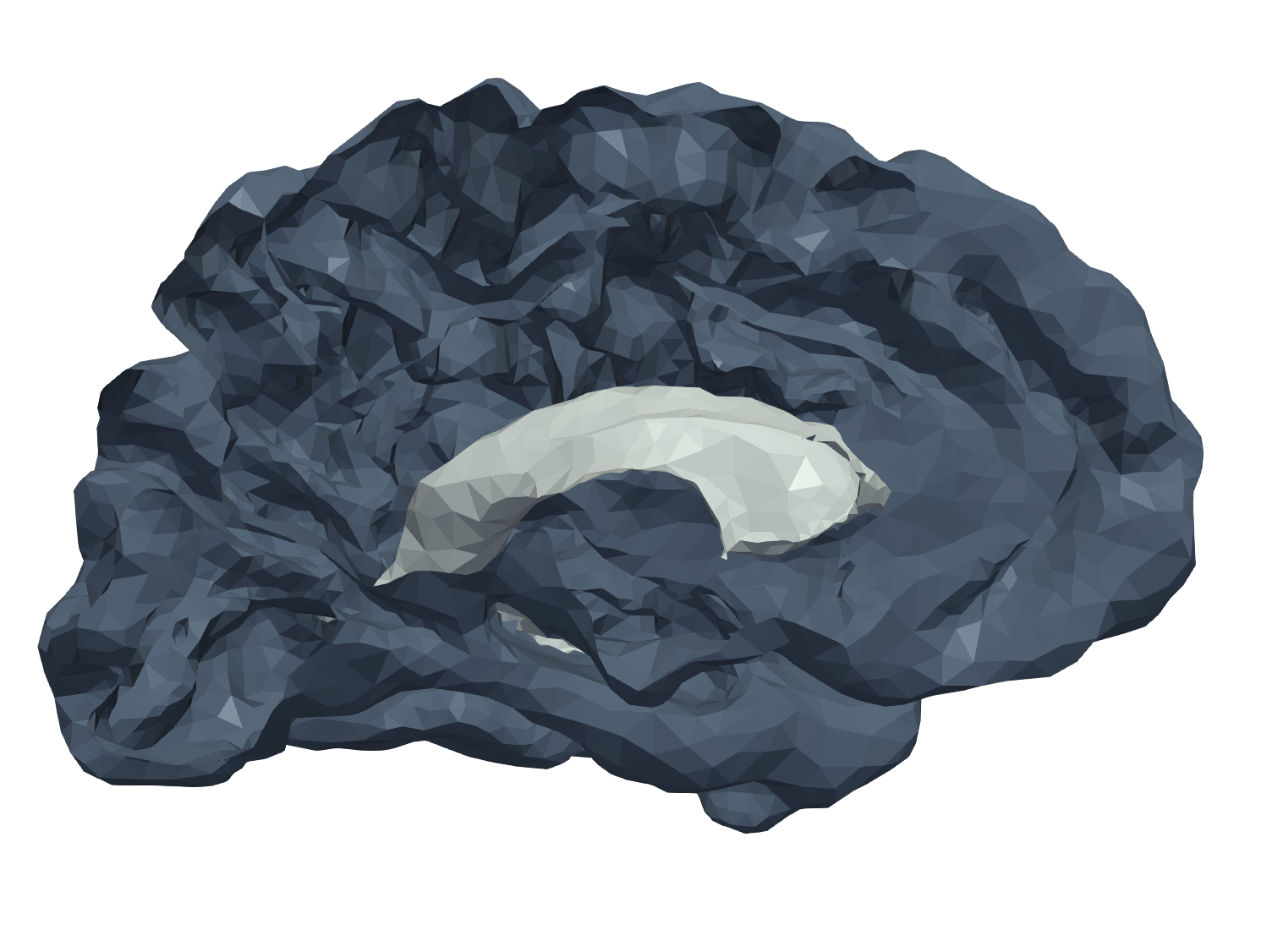}
    \caption{Illustration of the semi-inner ventricular boundary (in
      white), coronal and sagittal clips (view from the y- and
      negative x-axes, respectively).}
    \label{fig:brain:b}
  \end{subfigure}
  \begin{subfigure}[t]{1.0\textwidth}
    \centering
    \captionsetup{width=.9\linewidth}%
    \includegraphics[height=5cm]{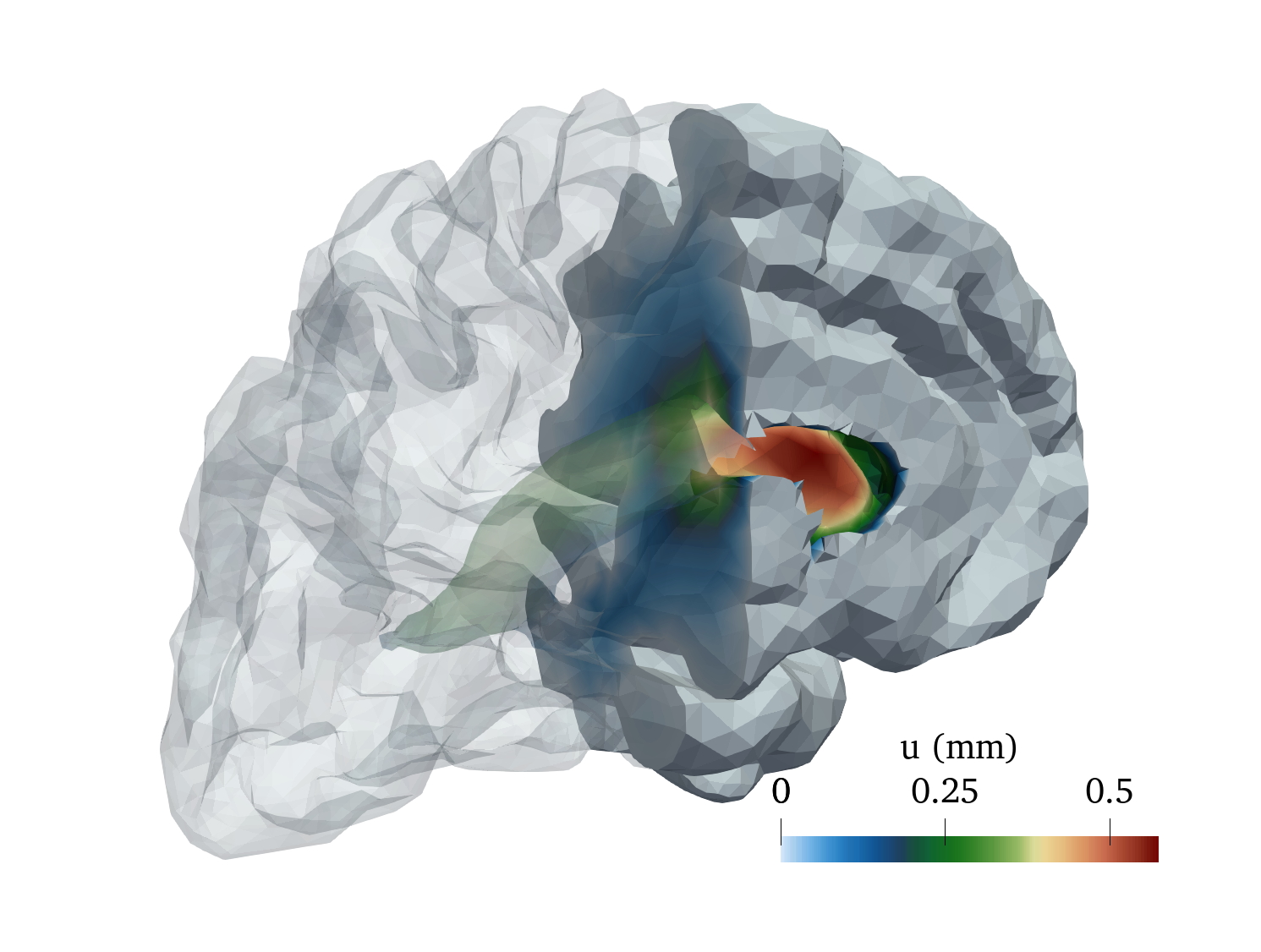}
    \includegraphics[height=5cm]{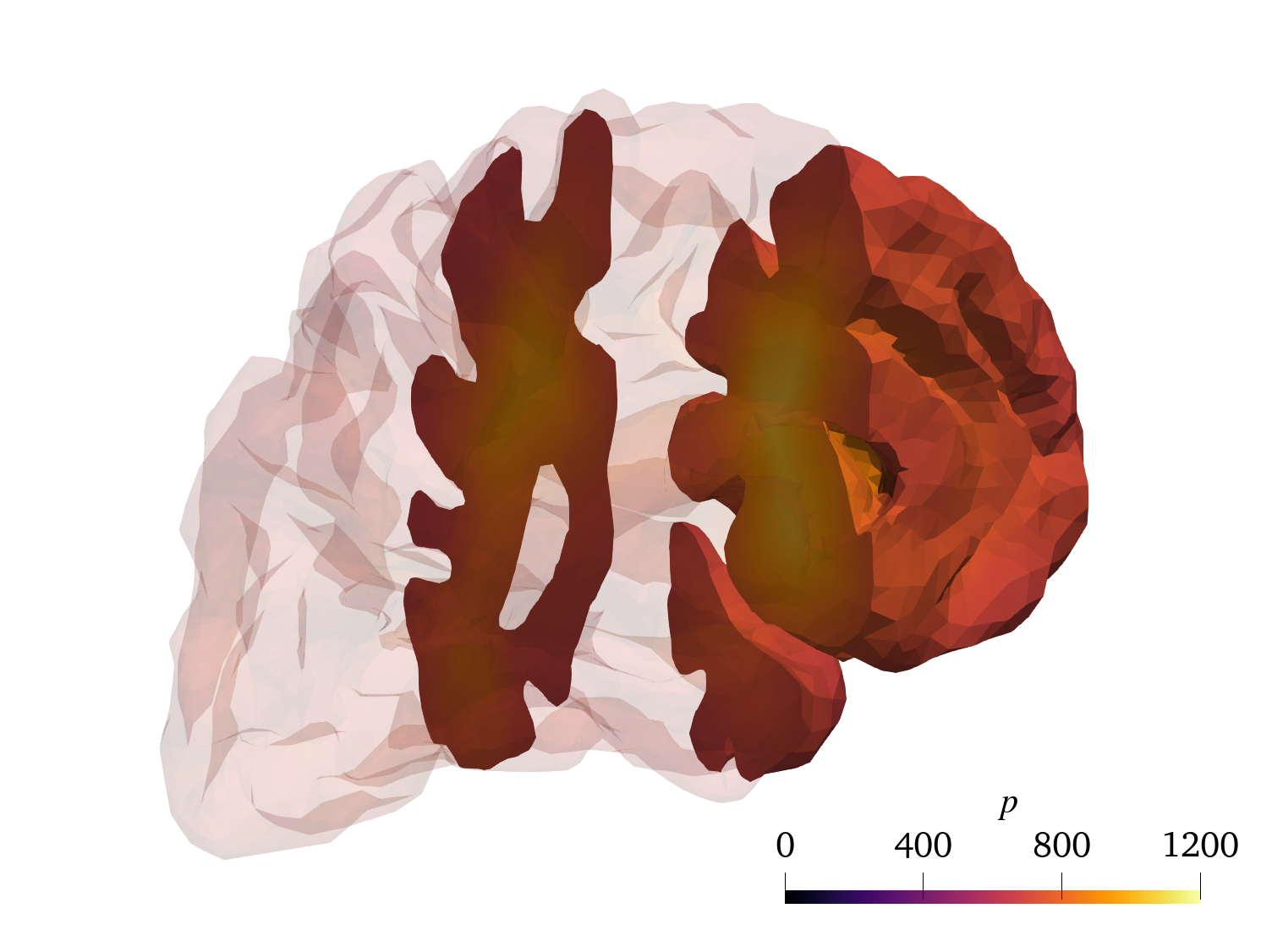} \\
    \includegraphics[height=5cm]{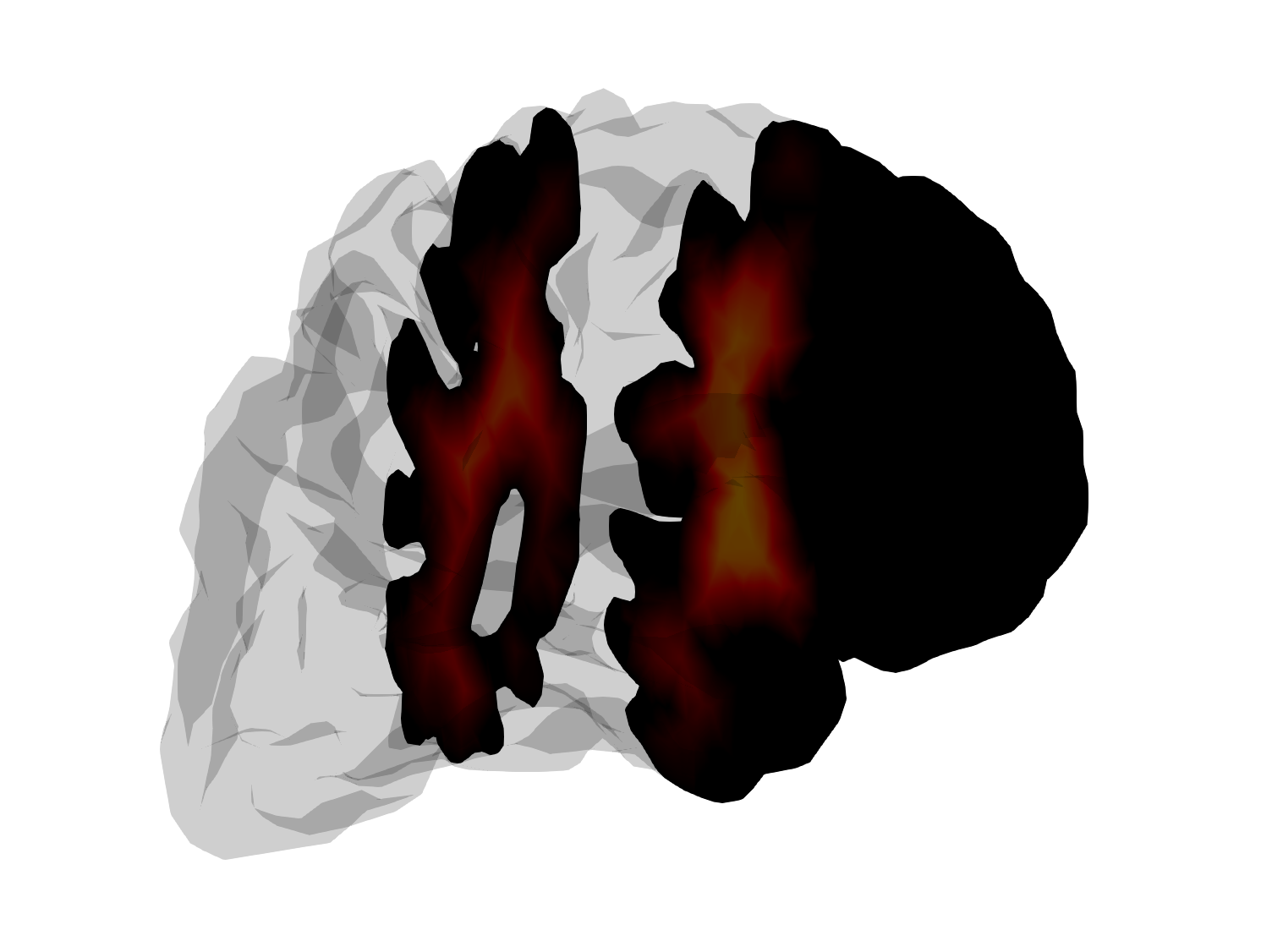}
    \includegraphics[height=5cm]{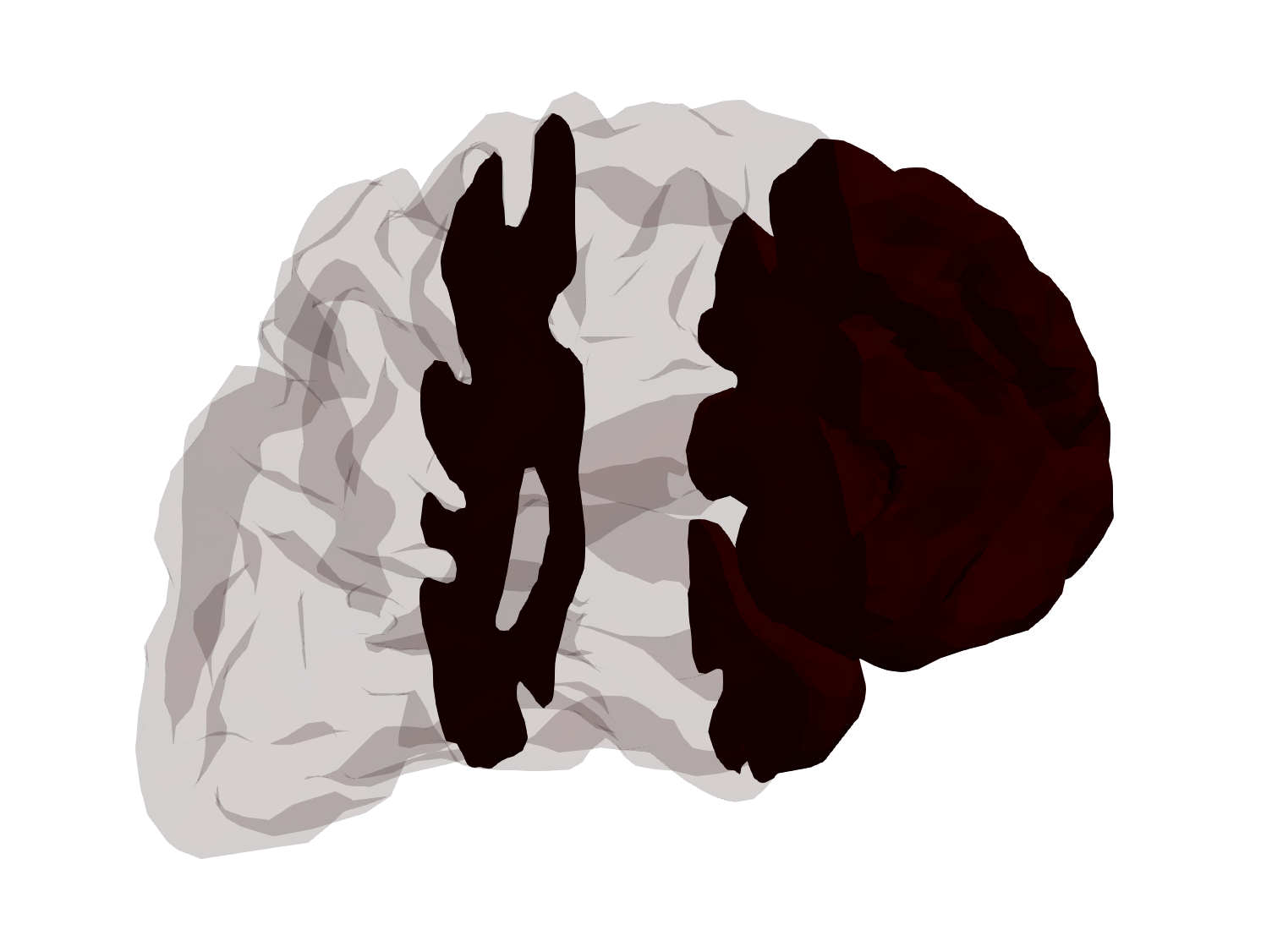}
    \caption{Left to right, top to bottom: Displacement $u$
      magnitude, arteriole/capillary pressure $p_1$, venous pressure
      $p_2$, and perivascular pressure $p_3$ at peak displacement ($T
      = 1.7$).}
    \label{fig:brain:c}
  \end{subfigure}
  \caption{The human brain as a poroelastic medium: meshes,
    boundaries, and snapshots of solution fields.}
\end{figure}

\begin{figure}
  \centering
  \captionsetup{width=.9\linewidth}%
  \includegraphics[width=0.49\textwidth]{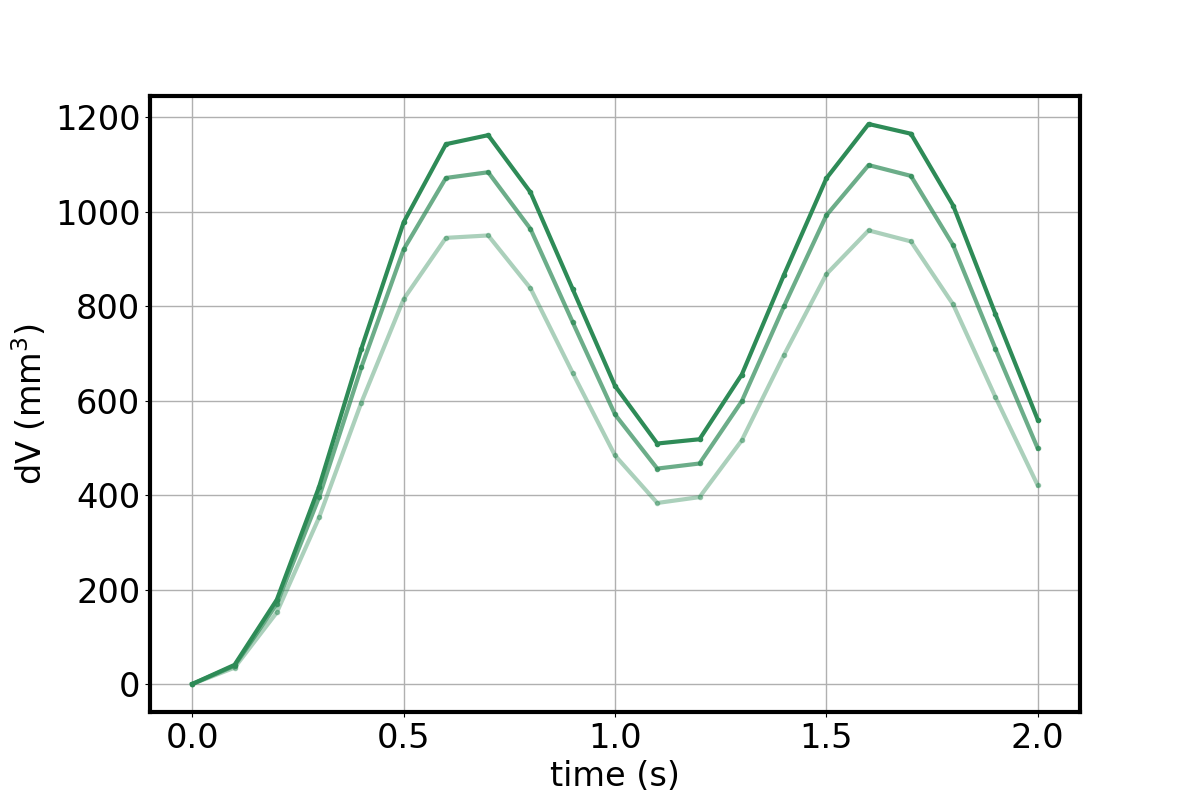}
  \includegraphics[width=0.49\textwidth]{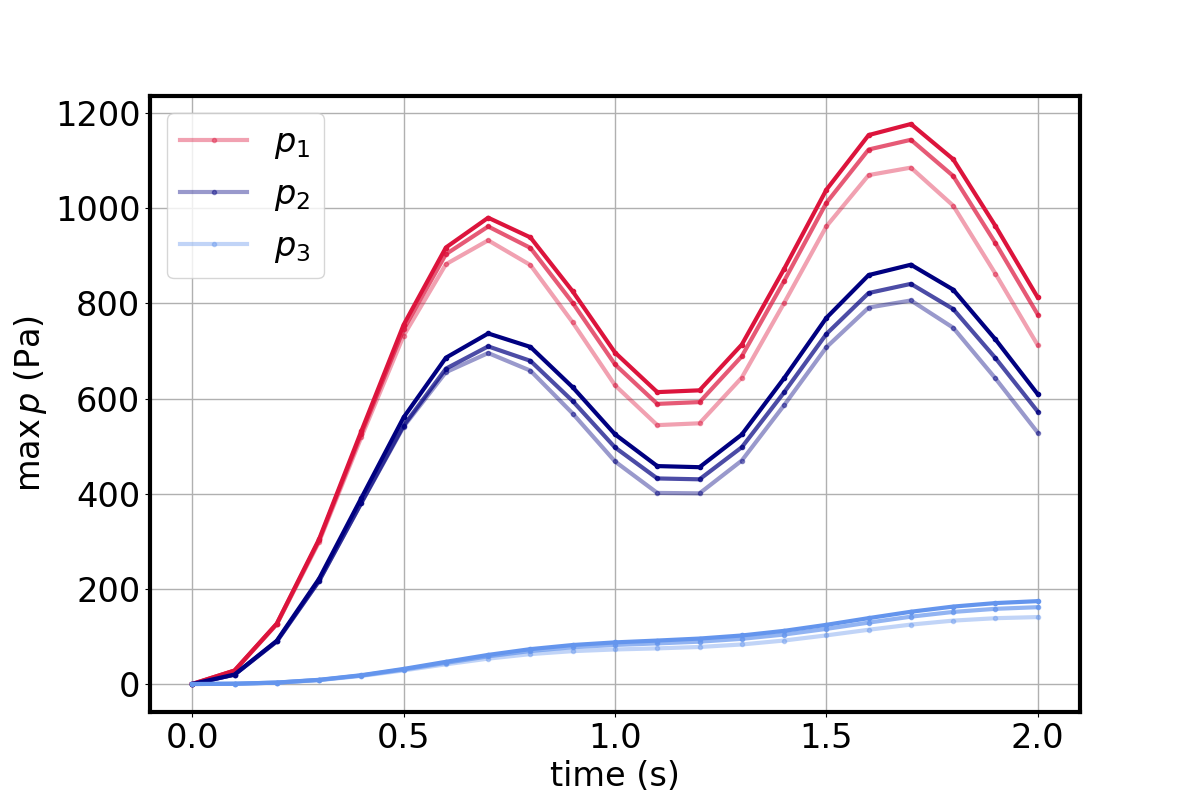}
  \includegraphics[width=0.49\textwidth]{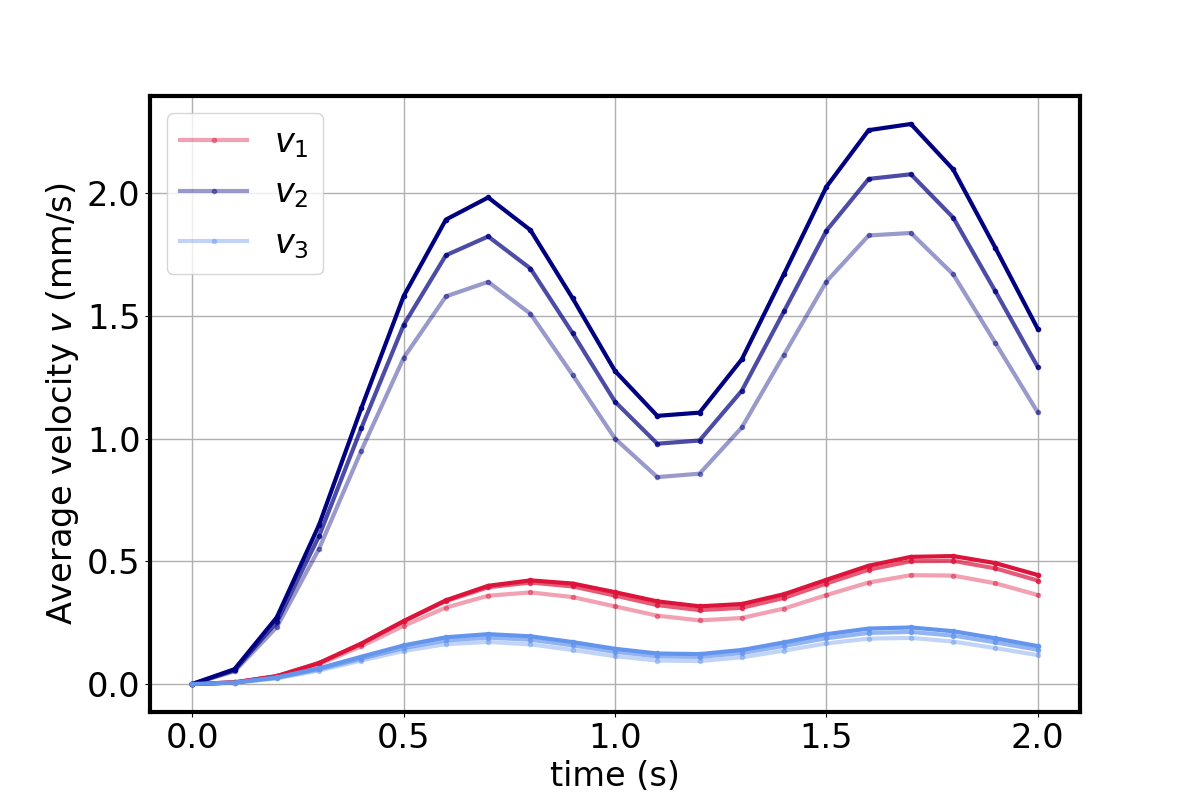}
  \includegraphics[width=0.49\textwidth]{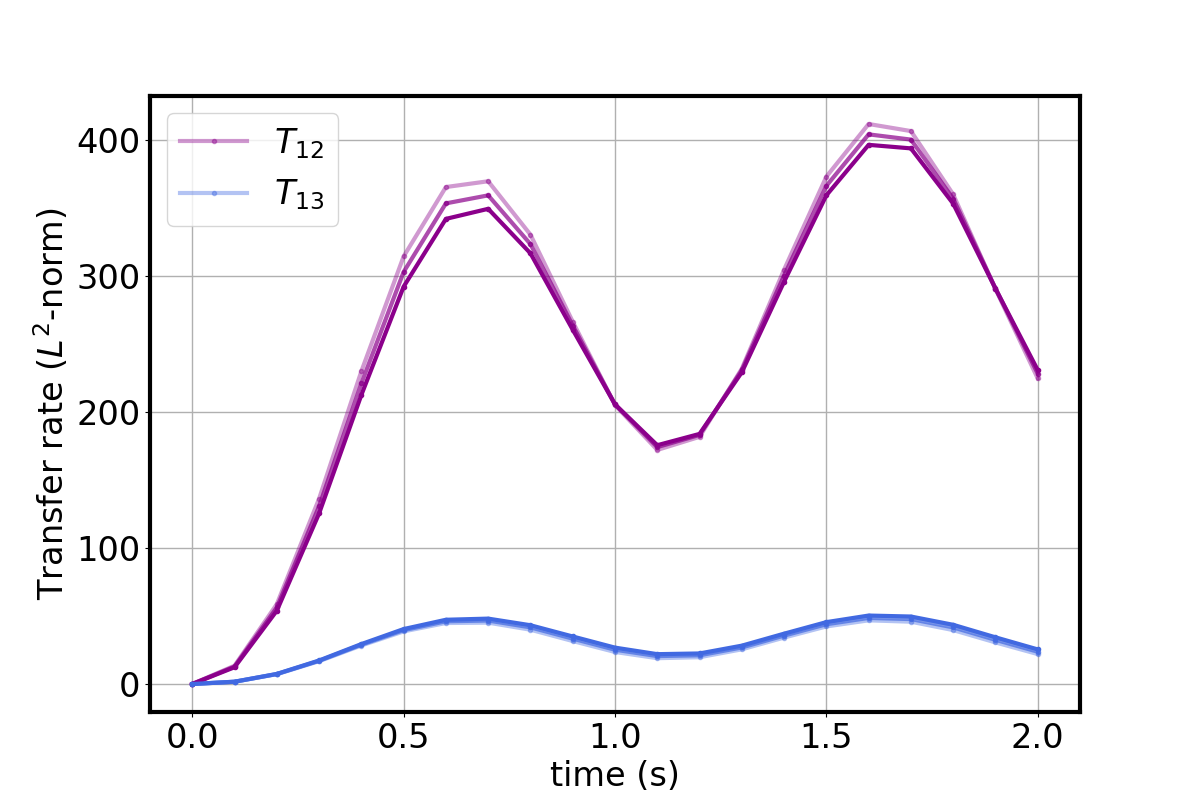}
  \caption{Left to right, top to bottom: Volume change $\textrm{d}V$,
    peak pressure $p_i$ and average velocity $v_i$ for $i = 1, 2, 3$
    and integrated transfer rates $T_{12}$ and $T_{13}$ over time for
    a series of adaptively refined meshes (a1, a2, a3). The opacity
    indicates the adaptive level: the more opaque, the finer the
    mesh.}
  \label{fig:brain:d}
\end{figure}
The given fluid influx induces pulsatile tissue displacements and
pressures in the different networks with varying temporal and spatial
patterns (Figure~\ref{fig:brain:c}, Figure~\ref{fig:brain:d}). The
brain hemisphere expands and contracts with peak changes in volume
\begin{equation*}
  \textrm{dV} = \int_{\Omega} \Div u \dx,
\end{equation*}
of up to 1200 mm$^3$. The largest displacements occur around the
lateral ventricle with peak displacement magnitudes of $\approx$0.5
mm. The arteriole/capillary pressure varies in space and time with a
peak pressure $\max p_1$ of up to 1200 Pa, a pressure pulse amplitude
$\Delta p_1$ of $\approx$ 560 Pa and a pressure difference in space of
$\approx$ 400 Pa. The venous pressure field show similar patterns,
though with lower temporal variations and higher spatial variability
inducing higher venous blood velocities of above 2.0 mm/s
(Figure~\ref{fig:brain:d}). The perivascular pressure shows a steady
increase of up to $\approx$ 200 Pa at $T = 2.0$, but only moderate
pulsatility and lower fluid velocities than both the
arteriole/capillary and venous networks.

\begin{table}
  \begin{subtable}{1.0\textwidth}
    \centering
    \begin{tabular}{c|rrrr|rrrrrr}
    \toprule
     & \#cells & $h_{\min}$ & $h_{\max}$ & \#dofs & $\textrm{d}V$ & $\Delta \textrm{d}V$ & $\max p_1$ & $\Delta p_1$ & $\max v_2$ & $\Delta v_2$ \\
    \midrule
    a1 & 20911 & 1.9 & 13.8 & 135774 & 961 & 577 & 1086 & 541 & 1.84 & 0.99 \\
    a2 & 66849 & 0.86 & 12.8 & 364416 & 1099 & 643 & 1144 & 555 & 2.08 & 1.10 \\
    a3 & 198471 & 0.43 & 11.4 & 1021749 & 1186 & 677 & 1177 & 564 & 2.28 & 1.19 \\
    \midrule 
    u2 & 167288 & 0.7 & 9.8 & 922350 & 1162 & 668 & 1160 & 559 & 2.24 & 1.18 \\
    \bottomrule
  \end{tabular}
    \captionsetup{width=.9\linewidth}
    \caption{Quantities of interest on the initial mesh (a1) and two
      adaptive refinement levels (a2, a3) and after uniform refinement
      (u2). Each row gives the number of mesh cells (\#cells), minimal
      and maximal cell size $h_{\min}$, $h_{\max}$ (mm), the number of
      degrees of freedom per time step (\# dofs), the computed peak
      volume change of the domain $\textrm{d}V$ (mm$^3$), and
      pulsatile volume change amplitude $\Delta \textrm{d}V$ (mm$^3$),
      peak arteriole/capillary pressure $\max p_1$ (Pa) and its
      pulsatile amplitude $\Delta p_1$ (Pa), peak venous fluid
      velocity $\max v_2$ (mm/s) and its pulsatile amplitude $\Delta
      v_2$ (mm/s).}
  \vspace{1em}
  \label{tab:adaptive:brain:a}
  \end{subtable}
  \begin{subtable}{1.0\textwidth}
    \centering
    \begin{tabular}{c|cccc|c}
    \toprule
     & $\eta_1$ & $\eta_2$ & $\eta_3$ & $\eta_4$ & $\eta$ \\
    \midrule
    a1 & $5.6 \times 10^3$ & $5.7 \times 10^5$ & $1.6 \times 10^6$ & $2.9 \times 10^3$ & $2.2 \times 10^6$ \\
    a2 & $4.6 \times 10^3$ & $2.7 \times 10^5$ & $7.8 \times 10^5$ & $2.9 \times 10^3$ & $1.1 \times 10^6$ \\
    a3 & $3.5 \times 10^3$ & $1.3 \times 10^5$ & $3.9 \times 10^5$ & $2.9 \times 10^3$ & $5.0 \times 10^5$ \\
    \bottomrule
  \end{tabular}
    \captionsetup{width=.9\linewidth}
    \caption{Computed error estimate $\eta$ and its partial
      contributions $\eta_1, \eta_2, \eta_3, \eta_4$ (see
      cf.~\eqref{eq:etas}) for the series of adaptively refined brain
      meshes (a1, a2, a3).}
  \label{tab:adaptive:brain:b}
  \end{subtable}
  \label{tab:adaptive:brain}
  \caption{} 
\end{table}

The local error indicators $\{ \eta_K \}_{K}$ as defined
by~\eqref{eq:eta_K} show substantial local error contributions with
substantial spatial variation (\Cref{fig:brain:indicators}): the
values range from the order of $10^3$ to $10^9$ on the initial mesh
$\mathcal{T}_h^0$. This large variation in error indicator magnitude
makes the choice of marking strategy important: the D\"orfler marking
strategy would lead to the marking of perhaps only a handful of cells
in this case as the local error indicators for a few cells would
easily add up to a significant percentage of the total
error. Therefore, we instead choose to employ a maximal marking
strategy with a marking fraction $\gamma_M = 0.03$ for this test
scenario.
\begin{figure}
  \includegraphics[width=0.49\textwidth]{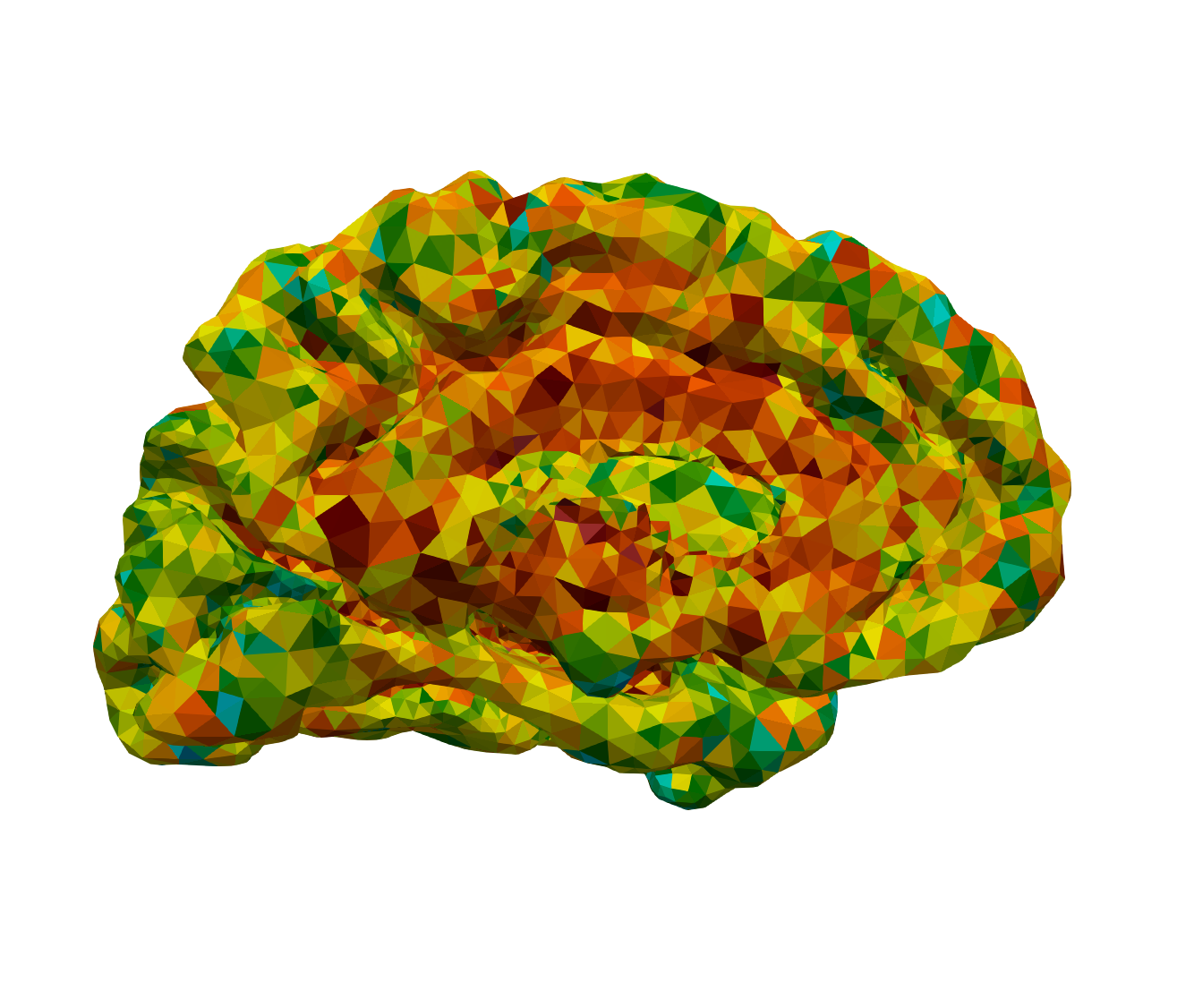}
  \includegraphics[width=0.49\textwidth]{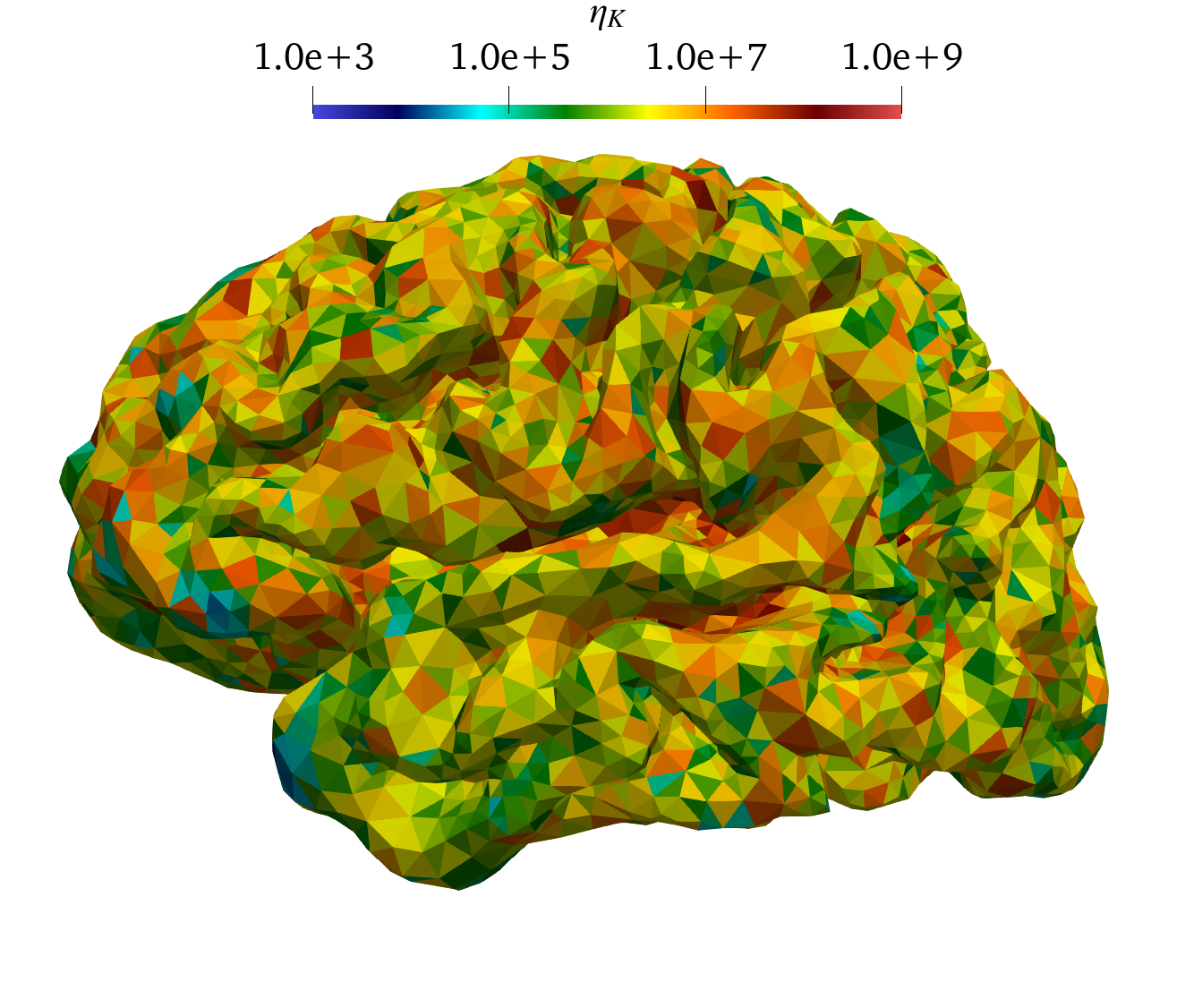} \\
  \vspace{-3em}
  \includegraphics[width=0.49\textwidth]{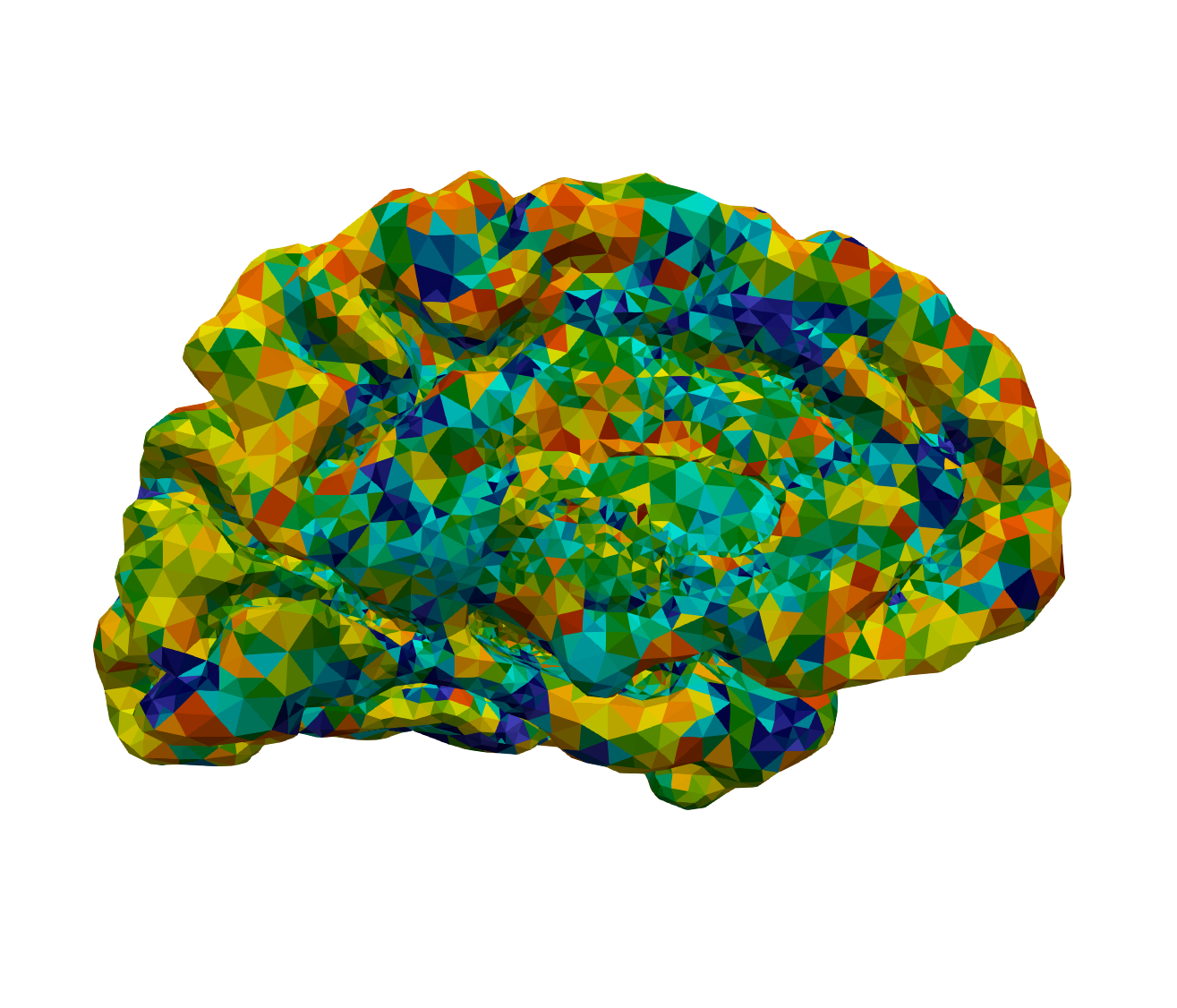}
  \includegraphics[width=0.49\textwidth]{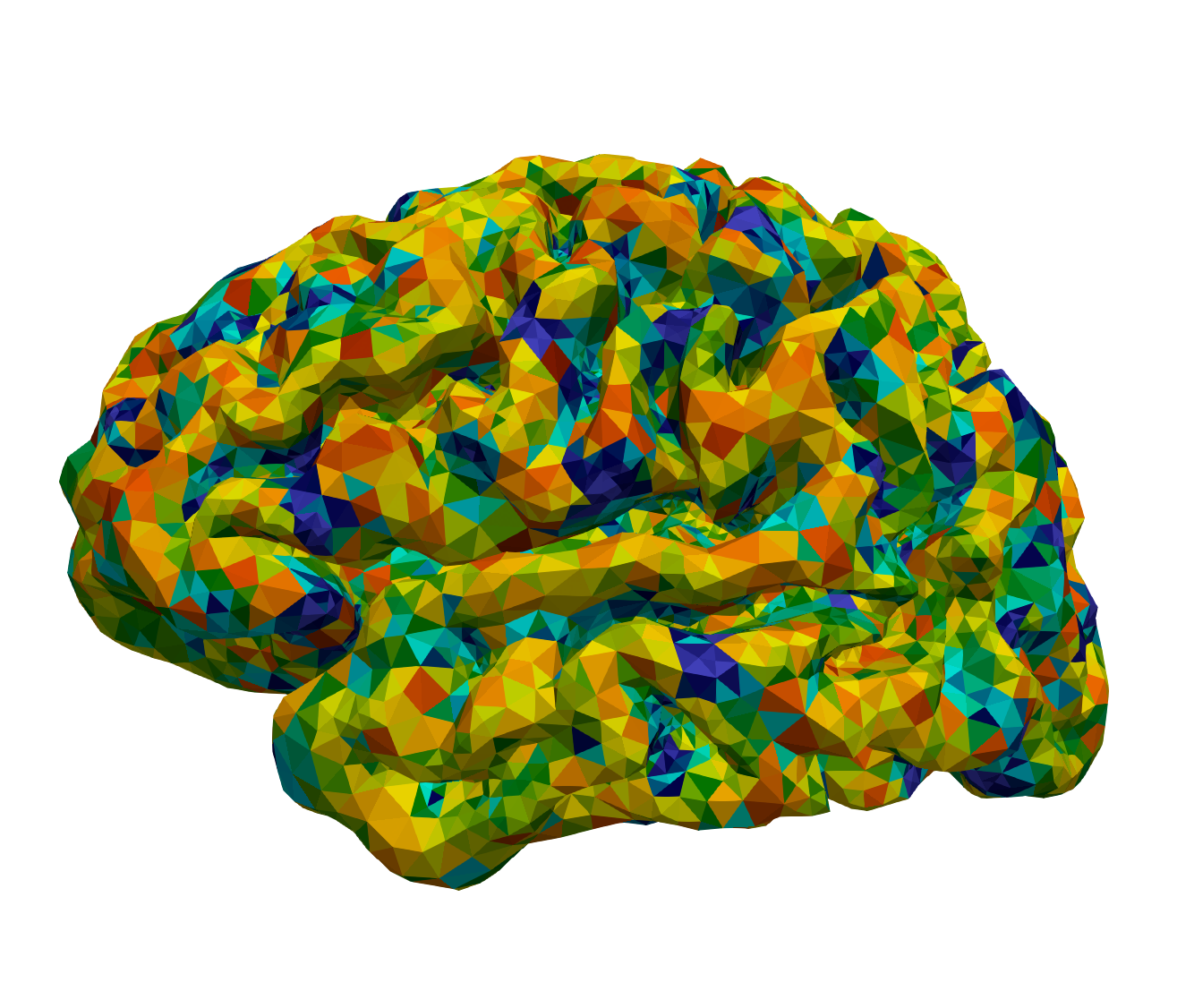} \\
  \vspace{-3em}
  \includegraphics[width=0.49\textwidth]{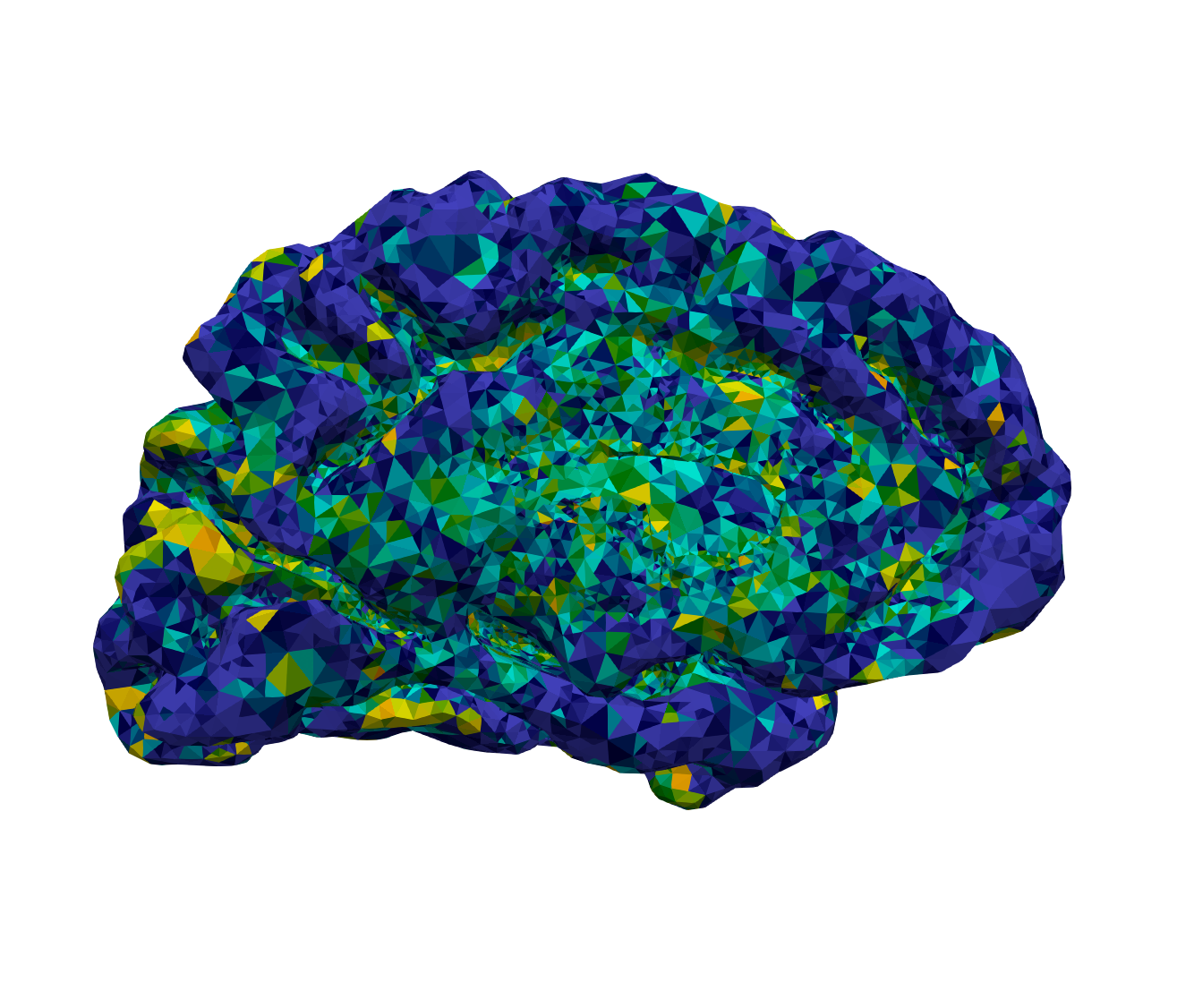}
  \includegraphics[width=0.49\textwidth]{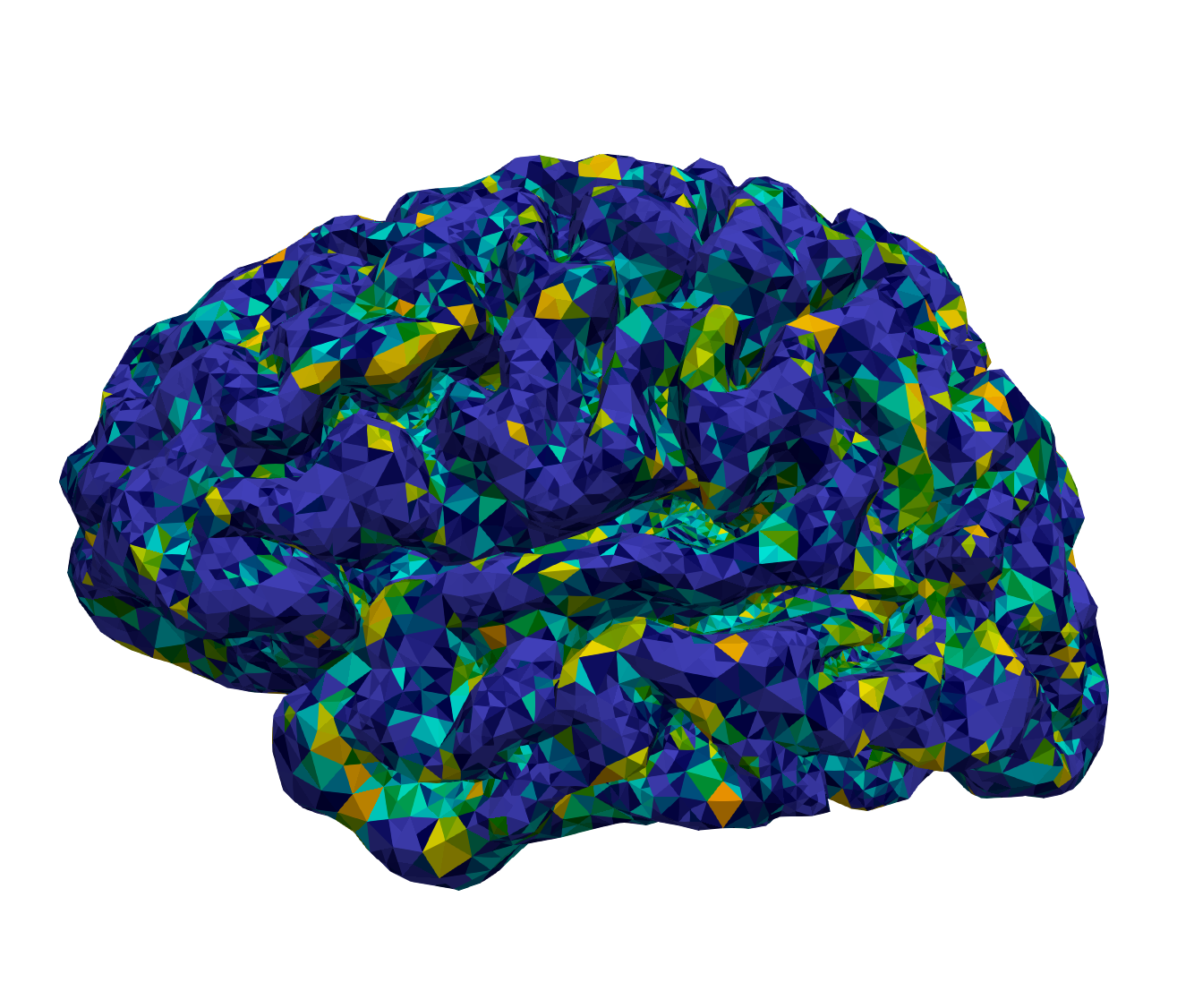}
  \caption{Error indicators $\{ \eta_K \}_K$ for three levels of
    adaptive refinement $\mathcal{T}_h^0, \mathcal{T}_h^1,
    \mathcal{T}_h^2$ the brain simulation scenario. Refinement levels
    from top to bottom (a1, a2, a3), sagittal views from right and
    left on the left and right.}
  \label{fig:brain:indicators}
\end{figure}

\changed{The adaptive algorithm yields locally refined meshes with
  around 67 000 cells after one refinement and 198 000 cells after two
  (\Cref{tab:adaptive:brain:a}). The error estimates
  cf.~\eqref{eq:etas} decrease with the adaptive refinement
  (\Cref{tab:adaptive:brain:b}). The contribution from $\eta_2$ and
  $\eta_3$ dominates the error estimate, and both of these as well as
  the total error estimate $\eta$ seem to halfen for each adaptive
  refinement level. We also note that in this simulation scenario, for
  all time steps $n$ and refinement levels, the spatial error
  contribution $\eta_h^n$ dominates the temporal contribution
  $\eta_{\tau}^n$ cf.~\Cref{alg:time}. Thus, the adaptive algorithm
  does not refine the time step and the uniform initial time step of
  $\Delta t = 0.1$ is kept throughout.}

\changed{We also inspect the computed quantities of physiological interest
(\Cref{tab:adaptive:brain:a}). Using the uniform refinement as an
intermediate reference value, we observe that the adaptive algorithm
seems to produce more accurate estimates of these quantities of
interest even after a single adaptive refinement, and that the
quantities of interest after two refinements are more accurate than
those of a uniform refinement. The adaptive procedure is therefore
able to drive more accurate computation of quantities of interest at a
lower or comparable cost as uniform refinement.}

\changed{Finally, we observe that a single uniform refinement yields a
  mesh with around 167 000 cells (\Cref{tab:adaptive:brain:a}). Thus,
  even with a small marking fraction of 3\%, the mesh growth in each
  adaptive iteration is substantial. In the Plaza
  algorithm~\citep{plaza2003mesh} and other similar conforming mesh
  refinement algorithms, both cells marked for refinement as well as
  neighboring cells will be refined to avoid mesh artefacts such as
  hanging nodes. Therefore, the domain geometry and initial mesh
  connectivity may strongly influence the adaptive mesh growth, and
  mesh growth may be more rapid than anticipated, especially in 3D. A
  more targeted adaptivity and more gradual growth could possibly be
  achieved with even lower marking fractions, though in the current
  case the propagation of cell refinement to neighboring cells seems
  to dominate. In any case, allowing for meshes with hanging nodes
  could be an effective albeit more disruptive strategy for reducing
  the computational complexity.}

\subsection{\changed{Pulsatility driven by boundary pressure}}

\changed{We also consider an alternative, more localized, scenario in
  which, instead of considering a Windkessel model for the CSF
  pressure and the directly coupled PVS pressure $p_3$, we directly
  prescribe a variation in the boundary PVS pressure. Concretely, we
  set
\begin{equation}
  p_{\rm csf} = - 2 \times 133 \sin (2 \pi t)
\end{equation}
and thus boundary CSF pressure variations of up to $\pm 2 \times 133$
Pa (corresponding to approx.~$\pm 2$ mmHg) in each cycle. In this
scenario, we set the bulk fluid influx to zero ($g_1 = g_2 = g_3 =
0.0$). We consider otherwise the same experiments as in
\Cref{sec:brain:source} and the same adaptive parameters. Also for
this case, we observe that the adaptive refinement -- even with a
small marking fraction and maximal marking yields non-localized
marking patterns and a relatively rapid growth in the number of mesh
cells. Three adaptive refinements yields meshes with $20\,911$,
$68\,608$ and $197\,975$ cells and no refinement of the time steps;
numbers which are comparable with the previous case.}

\changed{These results corroborate the observation that the adaptive
  algorithm drives distributed mesh refinement, and that the spatial
  errors overall dominate. Moreover, further studies may consider
  finer initial meshes and further refinements. A prerequisite for this
  would be robust parallel adaptive refinement algorithms including
  robust transfer of fields within the mesh hierarchy, and considered
  the topic of later work.}

\section*{Acknowledgments}

We wish to thank Dr.~Magne Nordaas, \changed{Dr.~Chris Richardson,} and
Prof.~Ragnar Winther for constructive discussions, as well as Dr.~Lars
Magnus Valnes for invaluable aid with the human brain mesh generation
pipeline.

\bibliographystyle{plainnat}
\bibliography{references}

\end{document}